 \renewcommand{\epsilon}{\varepsilon}
\newtheorem{theorem}{Theorem}[section]
 \newtheorem{lemma}[theorem]{Lemma}
 \newtheorem{Corollary}[theorem]{Corollary}
 \newtheorem{proposition}[theorem]{proposition}
\newtheorem{deff}[theorem]{Definition}
 \newtheorem{rem}[theorem]{Remark}
 \newcommand{\bth}{\begin{theorem}}
 \newcommand{\ble}{\begin{lemma}}
 \newcommand{\bcor}{\begin{corr}}
 \newcommand{\bdeff}{\begin{deff}}
 \newcommand{\bprop}{\begin{proposition}}
 \newcommand{\ele}{\end{lemma}}
 \newcommand{\ecor}{\end{corr}}
 \newcommand{\edeff}{\end{deff}}
 \newcommand{\eprop}{\end{proposition}}
 \renewcommand{\Pi}{\varPi}
 \renewcommand{\epsilon}{\varepsilon}
\numberwithin{equation}{section}
\thanks{The authors were supported  by National Science Foundation of China(No.11771103) and  Guangxi Natural Science Foundation(No.2017GXNSFFA198017).}
\title
[Nonlinear second boundary conditions ]{On the Dirichlet problem for special Lagrangian curvature potential equation}
\author{Rongli Huang}
\address{School of Mathematics and Statistics, Guangxi Normal University,
Guilin, Guangxi 541004, People's Republic of China}
\email{ronglihuangmath@gxnu.edu.cn}
 \author{Yongmei Liang}
\address{School of Mathematics and Statistics, Guangxi Normal University,
Guilin, Guangxi 541004, People's Republic of China}
\email{}
\date{}
\begin{document}
\maketitle
\begin{abstract}
In this paper, we study a class of special Lagrangian curvature potential equations and obtain  the existence of smooth solutions for Dirichlet
problem.  The existence result is based on a priori estimates of global $C^{0}$, $C^{1}$ and $C^{2}$ norms of solutions  under the assumption of existence of a subsolution.
\end{abstract}

\let\thefootnote\relax\footnote{
2010 \textit{Mathematics Subject Classification}. Primary 53C44; Secondary 53A10.

\textit{Keywords and phrases}. Special Lagrangian curvature operator; Concavity;  Subsolution.}

\section{Introduction}
It is well known that the Dirichlet problem is well posed for elliptic equations. The situation is complicated for  fully nonlinear elliptic equations; see for instance the seminal papers of  L. Caffarelli. L. Nirenberg and J. Spruck \cite{CNS1}-\cite{CNS4}. We will restrict our attention from now on to special Lagrangian equation and some related problems. R. Harvey and H.B. Lawson established the theory of calibrated geometry and introduced the special Lagrangian  equation in \cite{HL} back in 1982. They obtained that its solutions $u$  have the property that the graph $p=\nabla u$ in $\mathbb{R}^{n}\times \mathbb{R}^{n} = \mathbb{C}^{n}$ being a Lagrangian submanifold which is absolutely volume-minimizing and the linearization at any solution being elliptic.  Let $x=(x_{1},x_{2},\cdots,x_{n})$, $u=u(x)$ and $\lambda(D^2 u)=(\lambda_1,\cdots, \lambda_n)$ are the eigenvalues of Hessian matrix $D^2 u$ according to $x$. Yuan \cite{Yuan}  proved that every
classical convex solution of  special Lagrangian equation
\begin{equation*}
\sum_{i=1}^n\arctan\lambda_{i}=c,\quad
 \mathrm{in}\quad \mathbb{R}^{n},
\end{equation*}
must be a quadratic polynomial. The second boundary value problem for special Lagrangian equation was considered by Brendle and Warren \cite{SM}. They obtained the existence and uniqueness of the solution by the elliptic method. Later Brendle-Warren' theorem was generalized by several authors (see, e.g., \cite{HO}-\cite{HR}).

Assume that

(A): $\Omega$ is $C^{4}$ bounded domain in $\mathbb{R}^{n}$, $\varphi: \partial\Omega\longrightarrow\mathbb{ R}$ be in $C^{4}(\partial\Omega)$ and $h: \bar{\Omega}\longrightarrow [(n-2)\frac{\pi}{2}+\delta, n\frac{\pi}{2})$ be in $C^{2}(\bar{\Omega})$ where $\delta>0$ small enough.

In another case, Collins,  Sebastien and  Xuan \cite{CPW}  were concerned with
Lagrangian phase equation
\begin{equation}\label{e1.1}
\sum_{i=1}^n\arctan\lambda_{i}=h(x),\quad
 \mathrm{in}\quad \Omega,
\end{equation}
associated with  Dirichlet problem
\begin{equation}\label{e1.2}
u=\varphi,\quad
 \mathrm{on}\quad \partial\Omega.
\end{equation}
Let $\alpha$ be any constant satisfying $0<\alpha<1$. They proved that
\begin{theorem}\label{t1.1}
Suppose that $\Omega$, $\varphi$ and $h$ satisfy (A). If there exists a $C^{4}(\bar{\Omega})$ subsolution of (\ref{e1.1}) and (\ref{e1.2}), i.e. there exists a function $\underline{u}\in C^{4}(\bar{\Omega})$
such that
\begin{equation*}
\left\{ \begin{aligned}
\sum_{i=1}^n\arctan\underline{\lambda}_{i}&\geq h(x), &&\quad
 \mathrm{in}\quad \Omega, \\
\underline{u}&=\varphi,             &&\quad
 \mathrm{on}\quad \partial\Omega,
\end{aligned} \right.
\end{equation*}
where $\underline{\lambda}_{i}$ are the eigenvalues of $D^{2}\underline{u}$. Then  the Dirichlet problem (\ref{e1.1}) (\ref{e1.2})
admits a unique $C^{3,\alpha}(\bar{\Omega})$ solution. In addition, if   $\Omega,h,\varphi$ and $\underline{u}$ are smooth, then the solution $u$ is smooth.
\end{theorem}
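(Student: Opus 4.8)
The plan is to prove Theorem \ref{t1.1} by the method of continuity. Embed \eqref{e1.1}--\eqref{e1.2} in the family
\begin{equation*}
F(D^2 u^t) := \sum_{i=1}^n \arctan \lambda_i(D^2 u^t) = t\,h(x) + (1-t)\,F(D^2\underline{u}), \qquad u^t = \varphi \ \text{ on } \partial\Omega, \quad t \in [0,1],
\end{equation*}
whose solution at $t=0$ is $u^0 = \underline{u}$ and whose equation at $t=1$ is \eqref{e1.1}. Because $\underline{u}$ is a subsolution, $F(D^2\underline{u}) \geq h \geq (n-2)\tfrac{\pi}{2} + \delta$, and since also $F < n\tfrac{\pi}{2}$ pointwise, the right-hand side stays in the supercritical interval $[(n-2)\tfrac{\pi}{2}+\delta,\,n\tfrac{\pi}{2})$ for every $t$. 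On this interval $F$ is concave and elliptic (this is the content of \cite{CPW}), so the linearization $L^t v = F^{ij}(D^2 u^t)\,v_{ij}$ is uniformly elliptic; the implicit function theorem in Hölder spaces then makes the set $I$ of $t$ admitting a $C^{3,\alpha}(\bar\Omega)$ solution open. Everything therefore reduces to a priori $C^0$, $C^1$, $C^2$ bounds on $u^t$ uniform in $t$ — once $\|u^t\|_{C^2(\bar\Omega)}$ is controlled, uniform ellipticity together with the Evans--Krylov and Schauder theories upgrade the estimate to $C^{3,\alpha}(\bar\Omega)$ and close the argument, also giving smoothness for smooth data.

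I would obtain the $C^0$ bound as follows: since $\underline{u}$ is a subsolution sharing the boundary data and $F$ is degenerate elliptic, the comparison principle gives $\underline{u} \leq u^t$, a lower bound; for the upper bound, supercriticality forces every eigenvalue to satisfy $\lambda_i \geq -\cot\delta$ (as $\arctan\lambda_i \geq (n-2)\tfrac{\pi}{2}+\delta-(n-1)\tfrac{\pi}{2} = \delta-\tfrac{\pi}{2}$), so $u^t + \tfrac12\cot\delta\,|x|^2$ is convex and attains its maximum on $\partial\Omega$, yielding $\sup_\Omega u^t \leq C$. The interior gradient bound is a Bernstein argument: differentiating the equation, $L^t(u^t_\xi) = t\,\partial_\xi h$, and the maximum principle applied to a suitable function of $|\nabla u^t|^2$ (using concavity of $F$) gives $\sup_\Omega|\nabla u^t| \leq C(1+\sup_{\partial\Omega}|\nabla u^t|)$, while the boundary gradient bound comes from barriers — $\underline{u} \leq u^t$ bounds the inner normal derivative from below, and a supersolution barrier of the form $\underline{u} + A\,d - B\,d^2$, $d = \dist(\cdot,\partial\Omega)$, bounds it from above.

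The crux is the $C^2$ bound. Globally it reduces to a boundary bound: summing the twice-differentiated equations and using concavity, $L^t(\Delta u^t) \geq -C$, so $\Delta u^t$ is essentially a subsolution of a linear elliptic equation and $\sup_\Omega \Delta u^t \leq C(1+\sup_{\partial\Omega}\Delta u^t)$; combined with $\lambda_i \geq -\cot\delta$ this controls $\sup_\Omega|D^2 u^t|$ by $C(1+\sup_{\partial\Omega}|D^2 u^t|)$. On $\partial\Omega$, in a local frame with $e_n$ normal, the pure tangential second derivatives follow from $u^t = \varphi$ and the controlled first derivatives; the mixed tangential--normal derivatives are handled by applying a tangential vector field to $u^t - \varphi$ (which vanishes on $\partial\Omega$) and dominating it by a barrier built from $\underline{u}$; and the double normal derivative $u^t_{nn}$ — the delicate term — is bounded by combining the equation itself, which in the supercritical range confines all eigenvalues once $n-1$ of them are controlled, with a lower bound for $u^t_{nn}$ coming from $\underline{u}\leq u^t$ and their agreement on $\partial\Omega$. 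I expect this double-normal estimate, and the precise barrier for the mixed terms, to be the main obstacle; it is exactly here that the subsolution hypothesis and the quantitative supercriticality $\delta>0$ are indispensable, as in the general scheme for Dirichlet problems admitting a subsolution.

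Given the uniform $C^2$ bound, the equation is uniformly elliptic with $F$ concave, so the Evans--Krylov theorem yields interior $C^{2,\alpha}$ estimates and, with the boundary second-derivative bounds, a global $C^{2,\alpha}(\bar\Omega)$ estimate; differentiating the equation and applying Schauder theory gives $C^{3,\alpha}(\bar\Omega)$, and bootstrapping gives $C^\infty$ for smooth data. Uniqueness follows from the comparison principle: if $u_1,u_2$ both solve \eqref{e1.1}--\eqref{e1.2}, then $w = u_1-u_2$ satisfies $a^{ij}(x)\,w_{ij}=0$ with $a^{ij} = \int_0^1 F^{ij}\big(sD^2u_1+(1-s)D^2u_2\big)\,ds > 0$ and $w=0$ on $\partial\Omega$, whence $w\equiv 0$.
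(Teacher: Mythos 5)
Your overall architecture (continuity method along a path for which $\underline{u}$ stays a subsolution, $C^0$ via comparison and semiconvexity, boundary gradient via barriers, reduction of the global $C^2$ bound to the boundary, then Evans--Krylov and Schauder) is the same scheme as in \cite{CPW}, which is where the paper takes Theorem \ref{t1.1} from, and as in the paper's proof of its own Theorem \ref{t1.3}. However, two steps fail as written. First, the assertion that $F(\lambda)=\sum_i\arctan\lambda_i$ ``is concave'' on the supercritical band $[(n-2)\tfrac{\pi}{2}+\delta,\,n\tfrac{\pi}{2})$, attributed to \cite{CPW}, is false: $F$ is concave only when the phase is at least $(n-1)\tfrac{\pi}{2}$ (all eigenvalues nonnegative); in the range where one eigenvalue is negative the Hessian of $F$ has no sign, and the actual content of \cite{CPW} (reproduced in Lemmas \ref{t3.2}--\ref{t3.3}) is that the \emph{level sets} are convex and that the reparametrized operator $G=-e^{-AF}$ is concave for $A=A(\delta)$ large. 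Your interior $C^2$ step, ``twice differentiate and use concavity to get $L^t(\Delta u^t)\ge -C$,'' and your appeal to Evans--Krylov both invoke concavity of $F$ itself; in the admissible range these arguments must be run for $G(D^2u)=-e^{-Ah}$ (where the discarded term $G^{ij,kl}u_{ije}u_{kle}\le 0$ is legitimate and Evans--Krylov applies), and the choice of $A$ depending on $\delta$ has to enter. This is repairable, but as stated the step is wrong, and it is precisely the point the cited work exists to fix.

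Second, the double normal estimate is asserted rather than proved. Knowing the tangential block $u_{\alpha\beta}$ and the mixed derivatives $u_{\alpha n}$ are bounded does \emph{not} by itself confine $u_{nn}$: by the perturbation lemma of Caffarelli--Nirenberg--Spruck (Lemma \ref{t5.7}), as $u_{nn}\to\infty$ the phase tends to $\sum_{\alpha<n}\arctan\lambda'_\alpha+\tfrac{\pi}{2}$, and one only gets a contradiction with $F(D^2u)=h$ if one has the \emph{uniform strict} inequality $\sum_{\alpha<n}\arctan\lambda'_\alpha+\tfrac{\pi}{2}\ge h+\omega_0$ on $\partial\Omega$ for some $\omega_0>0$. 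Producing this gap $\omega_0$ is the heart of the boundary estimate; it is Trudinger's argument (compare Lemma \ref{t5.6}): one minimizes the limiting reduced operator minus the right-hand side over $\partial\Omega$, uses concavity of the reduced operator, the subsolution together with the Hopf lemma for $u-\underline{u}$, and a barrier for an auxiliary function $\Phi$ in a boundary neighborhood. Your sketch replaces this by ``a lower bound for $u_{nn}$ coming from $\underline{u}\le u$,'' which addresses the (easy) lower bound rather than the needed upper bound, and you explicitly flag the step as the main obstacle without supplying an argument; so the proof is incomplete exactly at its crux.
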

Here we noticed that they used the idea of B.Guan \cite{B} that the curvature assumption on $\partial\Omega$ is replaced by the assumption that there exists an admissible  subsolution of (\ref{e1.1}) and (\ref{e1.2}) to obtain the global $C^{2}$ estimates for the admissible solutions.

The aim of this paper is to study special Lagrangian curvature potential equation
\begin{equation}\label{e1.3}
\sum_{i=1}^n\arctan\kappa_{i}=h(x),\quad
 \mathrm{in}\quad \Omega,
\end{equation}
in conjunction with Dirichlet condition
\begin{equation}\label{e1.4}
u=\varphi,\quad
 \mathrm{on}\quad \partial\Omega.
\end{equation}
where $(\kappa_1,\cdots, \kappa_n)$ are the principal curvatures of the graph $\Gamma=\{(x,u(x))\mid x\in \Omega\}$.
 It shows that a very nice geometric interpretation of special Lagrangian curvature operator
$F(\kappa_{1},\cdots, \kappa_{n})\triangleq\sum_{i=1}^n\arctan\kappa_{i}$ in \cite{G}.
The background of the so-call special Lagrangian curvature potential  equation (\ref{e1.3}) can be seen in the literature \cite{G} \cite{HL1}.  In addition,  the first author and Sitong Li  considered (\ref{e1.3})  with the second boundary condition
\begin{equation}\label{e1.4a}
Du(\Omega)=\tilde{\Omega},
\end{equation}
then they obtained the following result \cite{HL2} for $h(x)$ being some constant.
\begin{theorem}\label{t1.1aa}
Suppose that  $\Omega$, $\tilde{\Omega}$  are uniformly convex bounded
domains with smooth boundary in $\mathbb{R}^{n}$. Then there exist a uniformly convex solution $u\in C^{\infty}(\bar{\Omega})$  and a unique constant $c$ with $h(x)=c$ solving (\ref{e1.3}) and (\ref{e1.4a}), and $u$ is unique up to a constant.
\end{theorem}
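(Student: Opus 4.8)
The plan is to prove Theorem~\ref{t1.1aa} by the method of continuity, reducing it to a family of a priori estimates. First I would recast \eqref{e1.3} together with the second boundary condition \eqref{e1.4a} as an oblique boundary value problem of Monge--Amp\`ere type. Writing the principal curvatures $\kappa_i$ of $\Gamma$ as the eigenvalues of the symmetric matrix $(I+Du\otimes Du)^{-1/2}D^2u\,(I+Du\otimes Du)^{-1/2}$, equation \eqref{e1.3} becomes a fully nonlinear equation $F(x,Du,D^2u)=h(x)$ that is elliptic on uniformly convex $u$ and concave in $D^2u$ there; the concavity of $\sum\arctan\kappa_i$ in the Hessian is the geometric input recorded in \cite{G}. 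If $\partial\tilde\Omega=\{\,\tilde h(y)=0\,\}$ for a uniformly convex defining function $\tilde h$, then \eqref{e1.4a} is equivalent to the nonlinear oblique condition $\tilde h(Du(x))=0$ on $\partial\Omega$. I would then choose a continuity path, deforming $\tilde\Omega$ through uniformly convex domains $\tilde\Omega_t$ from a ball (where $u=\tfrac12|x|^2$ and $c=n\arctan 1$ is an explicit solution) to the target, and carry along the unknown pair $(u_t,c_t)$. The free parameter $c$ is essential: in a second boundary value problem one cannot prescribe the right-hand side, so $c_t$ enters as an eigenvalue-type Lagrange multiplier determined by the geometry of $\Omega,\tilde\Omega$.

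The substance of the proof is the set of global a priori estimates that give closedness. The $C^0$ and $C^1$ bounds are essentially free: $Du(\Omega)=\tilde\Omega$ forces $|Du|\le\operatorname{diam}\tilde\Omega$, so $u$ is Lipschitz, and since $u$ is determined only up to an additive constant we normalize, e.g.\ $\min_{\bar\Omega}u=0$. The crux is the global $C^2$ estimate. Interiorly I would differentiate \eqref{e1.3} twice, use the concavity of $F$, and a maximum-principle argument for a suitable auxiliary function (built from the largest principal curvature plus lower-order terms such as $|Du|^2$ and $-u$) to reduce the interior bound on $D^2u$ to its boundary values. At the boundary one must first verify that $\tilde h(Du)=0$ is strictly oblique, i.e.\ $\langle D\tilde h(Du),\nu\rangle>0$; this is where uniform convexity of $\tilde\Omega$ enters, together with positivity of $D^2u$. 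Then the tangential and double-normal second derivatives on $\partial\Omega$ are controlled by barriers constructed from the uniformly convex defining functions of $\Omega$ and $\tilde\Omega$, in the spirit of Urbas and of Brendle--Warren \cite{SM}. Simultaneously one extracts that the $\kappa_i$ stay in a compact subset of $(0,\infty)$, so $u$ remains uniformly convex and the equation uniformly elliptic; Evans--Krylov then gives a $C^{2,\alpha}$ bound and Schauder bootstrapping yields $C^\infty(\bar\Omega)$. I expect this $C^2$ estimate — the boundary part and the verification of strict obliqueness in particular — to be the main obstacle.

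For openness I would linearize the map $(u,c)\mapsto\bigl(F(x,Du,D^2u)-c,\ \tilde h(Du)\bigr)$ at a solution. The linearized interior operator is uniformly elliptic and the linearized boundary operator is strictly oblique; translation invariance $u\mapsto u+\mathrm{const}$ gives a one-dimensional kernel, and the presence of the unknown $c$ furnishes a one-dimensional contribution that matches the cokernel, so the full operator is Fredholm of index zero between the appropriate $C^{2,\alpha}$ spaces and is invertible after quotienting out constants. One then solves the linearized problem for $(\dot u,\dot c)$ and applies the implicit function theorem, so the set of admissible $t$ is open; combined with non-emptiness at $t=0$ and closedness, it is all of $[0,1]$, which produces the solution.

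It remains to prove uniqueness. If $(u_1,c_1)$ and $(u_2,c_2)$ are two uniformly convex solutions, then $w=u_1-u_2$ satisfies a linear uniformly elliptic equation $a^{ij}D_{ij}w+b^iD_iw=c_1-c_2$ in $\Omega$, where $a^{ij}>0$ is obtained by integrating $DF$ along the segment joining the two Hessians, together with a homogeneous strictly oblique boundary condition coming from subtracting $\tilde h(Du_1)=\tilde h(Du_2)=0$. If $c_1\neq c_2$, say $c_1>c_2$, evaluating at an interior extremum of $w$ (maximum principle) or at a boundary extremum (Hopf lemma, using obliqueness) yields a contradiction, so $c_1=c_2$ and $c$ is unique. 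Then $w$ solves a homogeneous oblique elliptic problem, hence is constant by the strong maximum principle together with the Hopf lemma, which gives uniqueness of $u$ up to an additive constant.
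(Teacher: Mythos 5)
You should first note that the paper you are asked to compare against does not actually prove Theorem \ref{t1.1aa}: it is quoted from the first author's earlier work \cite{HL2}, so the only fair comparison is with the standard Brendle--Warren-type scheme that your proposal also follows (oblique reformulation $\tilde h(Du)=0$, continuity method with the constant $c$ as an unknown, global $C^{2}$ estimates, Evans--Krylov, uniqueness by the maximum principle). At that level your outline is the right shape, but it has two concrete gaps. First, your anchor for the continuity path is false: for the \emph{curvature} operator the function $u=\tfrac12|x|^{2}$ is not a solution with constant right-hand side --- the principal curvatures of the paraboloid are $1/(1+|x|^{2})^{3/2}$ and $1/(1+|x|^{2})^{1/2}$, so $\sum_{i}\arctan\kappa_{i}$ is genuinely non-constant --- and moreover $Du(\Omega)=\Omega$, not a prescribed ball, so the path "deform $\tilde\Omega_{t}$ from a ball" has no admissible starting solution. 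The usual repair is to fix the target $\tilde\Omega$, pick any uniformly convex $u_{0}$ with $Du_{0}(\Omega)=\tilde\Omega$, set $h_{0}(x)=\sum_{i}\arctan\kappa_{i}[u_{0}]$, and deform the right-hand side $(1-t)h_{0}(x)+c_{t}$ to the constant $c_{1}$; as written, your openness/closedness argument is applied to a family whose $t=0$ member does not exist.

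Second, and more seriously, you assert that "the $\kappa_{i}$ stay in a compact subset of $(0,\infty)$" as a byproduct of the $C^{2}$ estimate, but a bound $|D^{2}u|\le C$ gives only an upper bound on the curvatures; the positive lower bound (i.e.\ preservation of uniform convexity along the path, which is what makes the operator elliptic and concave in your formulation) does not follow. In the Hessian case of \cite{SM} this is obtained through Legendre duality --- the Legendre transform solves the dual equation with phase $n\tfrac{\pi}{2}-c$, and the $C^{2}$ bound for the dual problem is the lower bound for $D^{2}u$ --- but the principal-curvature operator has no such clean duality, and since nothing forces $c>(n-1)\tfrac{\pi}{2}$, the equation $\sum_{i}\arctan\kappa_{i}=c$ by itself does not keep the smallest curvature away from $0$ either. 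This convexity-preservation step is precisely the kind of difficulty the quoted theorem's source has to resolve, and your proposal does not contain an argument for it; without it both the ellipticity/concavity used in your interior estimate and the strict obliqueness $\langle D\tilde h(Du),\nu\rangle>0$ (which you correctly note requires positivity of $D^{2}u$) are unsupported. The uniqueness paragraph is fine in outline, provided you justify that the averaged boundary vector field $\beta=\int_{0}^{1}D\tilde h(sDu_{1}+(1-s)Du_{2})\,ds$ is still oblique, which uses the convexity of $\tilde h$ and of the $u_{i}$.
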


It's interested in considering Dirichlet problem (\ref{e1.3}) and (\ref{e1.4}) with the same geometric conditions on $\Omega$ as the above theorem.
The central result of this paper is to find solution under the assumptions of the existence of the subsolution, which originate from
a series of Bo Guan's paper (cf. \cite{B} \cite{B1} \cite{B2}).

Now we state our main result in the following which gives a generalization of  Theorem 1.1.
\begin{theorem}\label{t1.3}
Suppose that $\Omega$, $\varphi$ and $h$ satisfy (A). If there exists a $C^{4}(\bar{\Omega})$ subsolution of (\ref{e1.3}) and (\ref{e1.4}), i.e. there exists a function $\underline{u}\in C^{4}(\bar{\Omega})$
such that
\begin{equation}\label{e2.6}
\left\{ \begin{aligned}
\sum_{i=1}^n\arctan\underline{\kappa}_{i}&\geq h(x),\ \ &&\quad
 \mathrm{in}\quad \Omega, \\
\underline{u}&=\varphi,             &&\quad
 \mathrm{on}\quad \partial\Omega,
\end{aligned} \right.
\end{equation}
where $\underline{\kappa}_{i}$ are the principal curvatures of the graph $\underline{\Gamma}=\{(x,\underline{u}(x))\mid x\in \Omega\}$. Then   Dirichlet problem (\ref{e1.3}) (\ref{e1.4})
admits a unique $C^{3,\alpha}(\bar{\Omega})$ solution. In addition, if $\Omega,h,\varphi$ and $\underline{u}$ are smooth, then the solution $u$ is smooth.
\end{theorem}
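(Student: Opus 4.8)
The approach is the classical method of continuity: one reduces the solvability of (\ref{e1.3})--(\ref{e1.4}) to a priori estimates in $C^{2,\alpha}(\bar\Omega)$ for admissible solutions, and then invokes the Evans--Krylov and Schauder theories for the remaining regularity, while uniqueness follows from the comparison principle. First I would record the structure of the operator. The principal curvatures of $\Gamma$ are the eigenvalues of the symmetric matrix $A[u]=(a_{ij})$ with $a_{ij}=W^{-1}\gamma^{ik}u_{kl}\gamma^{lj}$, where $W=\sqrt{1+|Du|^2}$ and $\gamma^{ij}=\delta_{ij}-u_iu_j/(W(1+W))$; set $G[u]:=\sum_{i=1}^n\arctan\kappa_i(A[u])=F(\kappa[u])$. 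Since $\partial F/\partial\kappa_i=(1+\kappa_i^2)^{-1}>0$, $G$ is elliptic; and since $h\geq(n-2)\tfrac{\pi}{2}+\delta$ lies in the supercritical (convex) phase range, the level sets $\{F=c\}$ are convex there, so $G$ is a concave function of $D^2u$ along admissible configurations --- this is the single most important structural fact for the argument. Then I would fix a continuity path $t\in[0,1]\mapsto(P_t)$, $G[u_t]=h_t$ in $\Omega$ with $u_t=\varphi$ on $\partial\Omega$, arranged so that $P_0$ is solvable (for instance by deforming $h$ to a suitable constant phase) and $h_1=h$, and so that $\underline u$ remains a subsolution of $P_t$ for every $t\in[0,1]$. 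The set of $t$ admitting an admissible $C^{2,\alpha}(\bar\Omega)$ solution is nonempty and open (the linearization is uniformly elliptic with the Dirichlet condition, hence invertible, so the implicit function theorem applies); the content of the proof is to show it is closed, i.e. to establish uniform estimates.

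The $C^0$ bound follows from the comparison principle: the subsolution inequality in (\ref{e2.6}) together with ellipticity gives $\underline u\leq u$, and an upper bound comes from comparison with a supersolution carrying the same boundary data $\varphi$. For the $C^1$ bound I would prove a boundary gradient estimate using $\underline u$ as a lower barrier and a suitable concave function as an upper barrier --- this is precisely where Bo Guan's device of replacing the convexity of $\partial\Omega$ by the existence of a subsolution is used --- and then reduce the global gradient bound to the boundary by a Bernstein-type maximum principle applied to $\log W$ plus an auxiliary term adapted to the graph curvature equation.

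The decisive step is the $C^2$ estimate. For the interior estimate, differentiate $G[u]=h_t$ twice in a fixed direction $\xi$; the concavity of $G$ in $D^2u$ controls the third-order term with the right sign, so that $\max_{\bar\Omega}|D^2u|$ is attained on $\partial\Omega$ up to controlled lower-order errors. On $\partial\Omega$, at a point $x_0$ one splits $D^2u$ into tangential--tangential derivatives (determined by $\varphi$ and the second fundamental form of $\partial\Omega$, hence bounded), tangential--normal derivatives (bounded by a barrier of the form $A(\underline u-u)+B|x-x_0|^2+(\text{linear})$, using that the linearized operator sends $\underline u-u$ to a nonpositive function by the subsolution inequality and concavity), and the double-normal derivative $u_{\nu\nu}(x_0)$ (controlled by the standard argument with a test quantity $\min_{|\xi|=1,\ \xi\,\perp\,\nu}u_{\xi\xi}$ together with the same subsolution barrier and concavity, which rules out blow-up). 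I expect this double-normal boundary estimate to be the main obstacle, and it is harder than in the Lagrangian phase equation of Theorem~\ref{t1.1} because $G$ depends on $Du$ through $W$ and $\gamma^{ij}$, so the differentiated identities carry additional first-order terms that must be absorbed; carefully adapting the scheme of \cite{CPW} and \cite{B} to the curvature matrix $A[u]$ is the real work.

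Once $\|u\|_{C^2(\bar\Omega)}\leq C$ is established, the curvatures $\kappa_i[u]$ lie in a fixed compact set on which $F$ is smooth, uniformly elliptic, and concave, so the interior and boundary Evans--Krylov theorems give $\|u\|_{C^{2,\alpha}(\bar\Omega)}\leq C$ for some $\alpha\in(0,1)$. This closes the continuity method and produces a $C^{2,\alpha}(\bar\Omega)$ solution of (\ref{e1.3})--(\ref{e1.4}); elliptic Schauder estimates then bootstrap it to $C^{3,\alpha}(\bar\Omega)$, and to $C^\infty(\bar\Omega)$ when $\Omega,h,\varphi,\underline u$ are smooth. Uniqueness follows by applying the comparison principle to $u_1-u_2$ and to $u_2-u_1$ for any two solutions $u_1,u_2$.
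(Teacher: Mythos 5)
Your overall architecture (continuity method, $C^0$--$C^1$--$C^2$ estimates under the subsolution hypothesis, Evans--Krylov plus Schauder, uniqueness by comparison) is the same as the paper's, but the step you single out as ``the single most important structural fact'' is wrong as stated, and this is a genuine gap. In the supercritical range $h\geq(n-2)\frac{\pi}{2}+\delta$ the level sets $\{F=c\}$ are indeed convex, but this does \emph{not} make $F(\kappa)=\sum_i\arctan\kappa_i$ (nor the induced operator on $D^2u$) concave: $\arctan$ changes convexity at $0$ and some $\kappa_i$ may be negative, so $F$ is genuinely non-concave there. The paper says explicitly that the lack of concavity of $F$ is one of the main obstacles, and the key device (taken from Collins--Picard--Wu) is to replace the equation by $G(\mathcal{A})=-e^{-AF(\mathcal{A})}=-e^{-Ah(x)}$ with $A=A(\delta)$ large, for which $G$ \emph{is} elliptic and concave (Lemmas \ref{t3.2}--\ref{t3.3}); the inequality $G^{ij,kl}M_{ij}M_{kl}\leq 0$ for this transformed operator is what is actually used in the global second-derivative estimate, in the boundary barrier constructions, in Trudinger's double-normal argument (concavity of $\widehat{G}$ in Corollary \ref{t5.5} and Lemma \ref{t5.6}), and in the application of Evans--Krylov. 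Without introducing this transformation, your interior $C^2$ step (``the concavity of $G$ in $D^2u$ controls the third-order term'') and your appeal to Evans--Krylov are unjustified; convexity of level sets alone is not enough for either.

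A second, smaller gap is the starting point of your continuity path: you propose deforming $h$ to ``a suitable constant phase,'' but solvability of the Dirichlet problem for the \emph{curvature} equation with constant right-hand side is not known a priori --- it is essentially the same problem you are trying to solve (the known constant-phase result, Theorem \ref{t1.1aa}, concerns the second boundary value problem, not Dirichlet data). The paper instead deforms the gradient dependence, setting $J^{t}(D^{2}u,Du)=\widetilde{G}(D^{2}u,tDu)$, so that $t=0$ is exactly the Lagrangian \emph{phase} Dirichlet problem already solved in \cite{CPW} under the same subsolution hypothesis, and the structure conditions (hence all the a priori estimates) are uniform in $t$. You would need either this path or a genuine existence result at your chosen endpoint; as written, nonemptiness of your set $S$ is not established. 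Also note that along your path one must check that $\underline{u}$ remains a subsolution for every $t$, which is automatic for the paper's path but not for an arbitrary deformation of $h$.
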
\par

This article is dived into several sections. In section two, we  state some well known algebraic and inequalities according to the structure conditions for the operator $F$. In section three, we can derive  the  zeroth and first order estimates for the admissible solutions. In section four, we  exhibit  how to estimate the second order derivatives of $u$ by the assumption of the  boundedness  for them on $\partial\Omega$.  In section five, we  establish  the desired bound for the second order derivatives of $u$ on $\partial\Omega$ which is a key and difficult part in the paper. Finally we give the proof of  Theorem  1.3 by the continuity method.

\section{Preliminaries}
In this section we collect various preliminary knowledge and auxiliary lemmas that  used in the proof of Theorem 1.3.\par
At the beginning,  we recall some necessary geometric quantities associated with the graph of a function $u\in C^{2}(\Omega)$.
We use the Einstein summation convention,
if the indices are different from 1 and $n$. $u_{i}=D_{i}u, u_{ij}=D_{ij}u,u_{ijk}=D_{ijk}u, \cdots$ Denote the  all derivatives of $u$ according to $x_{i},x_{j},x_{k},\cdots$.
In the coordinate system induced by the embedding $x\longmapsto(x,u(x))$ the metric of graph $u$ is given by
\begin{equation*}
g_{ij}=\delta_{ij}+u_{i}u_{j}
\end{equation*}
which is called the first fundamental form and its inverse is
\begin{equation}\label{e2.1}
g^{ij}=\delta_{ij}-\frac{u_{i}u_{j}}{1+|Du|^{2}}.
\end{equation}
The second fundamental form is given by
\begin{equation}\label{e2.2}
h_{ij}=\frac{u_{ij}}{\sqrt{1+|Du|^{2}}}.
\end{equation}
Using (\ref{e2.1}) and (\ref{e2.2}), we obtain
\begin{equation*}
h_{ik}g^{kj}=\frac{u_{ik}}{\sqrt{1+|Du|^{2}}}\Big(\delta_{kj}-\frac{u_{k}u_{j}}{1+|Du|^{2}}\Big).
\end{equation*}

From \cite{CNS5} and \cite{J}, the principal curvatures of the graph of $u$ are the eigenvalues of $[h_{ij}]$ relative to $[g_{ij}]$, thus they are the eigenvalues of the generally nonsymmetric matrix $[h_{ik}g^{kj}]$. Equivalently they are the eigenvalues of the symmetric matrix
\begin{equation*}
a_{ij}=b^{ik}h_{kl}b^{lj}=\frac{1}{w}b^{ik}u_{kl} b^{lj},
\end{equation*}
where $w=\sqrt{1+|Du|^{2}}$ and $[b^{ij}]$ is the positive square root of $[g^{ij}]$,  i.e. $b^{ik}b^{kj}=g^{ij}$, given by
\begin{equation*}
b^{ij}=\delta_{ij}-\frac{u_{i}u_{j}}{w(1+w)}.
\end{equation*}
Therefore
\begin{equation}\label{e2.3}
u_{ij}=wb_{ik}a_{kl}b_{lj},
\end{equation}
where $[b_{ij}]$ is the inverse of $[b^{ij}]$, given by
\begin{equation*}
b_{ij}=\delta_{ij}+\frac{u_{i}u_{j}}{1+w}.
\end{equation*}
Explicitly we see
\begin{equation}\label{e2.4}
a_{ij}=\frac{1}{w}\Big\{u_{ij}-\frac{u_{i}u_{l}u_{jl}}{w(1+w)}-\frac{u_{j}u_{l}u _{il}}{w(1+w)}+\frac{u_{i}u_{j}u_{k}u_{l}u_{kl}}{w^{2}(1+w)^{2}}\Big\}.
\end{equation}

Next we exhibit  some properties of the function $F$, which can be found in \cite{YY} and \cite{J}. As mentioned in the introduction, the function $F$ was defined by
\begin{equation}\label{e2.5}
F(\mathcal{A})=\sum_{i=1}^{n}\arctan \kappa_{i},
\end{equation}
where $(\kappa_1,\cdots, \kappa_n)$ are the eigenvalues of $\mathcal{A}=[a_{ij}]$.
The function $F$ satisfies
\begin{equation*}
F^{ij}(\mathcal{A})\eta_{i}\eta_{j}>0,\quad \eta\in\mathbb{R}^{n}-\{0\},
\end{equation*}
where
\begin{equation*}
F^{ij}(\mathcal{A})=\frac{\partial F(\mathcal{A})}{\partial a_{ij}}.
\end{equation*}
Note that $[F^{ij}]$ is symmetric because $\mathcal{A}$ is symmetric. Furthermore, $[F^{ij}]$ is diagonal if $\mathcal{A}$ is diagonal. \par

As a matter of convenience, we use the notation
\begin{equation*}
\underline{a}_{ij}=\frac{1}{\underline{w}}\Big\{\underline{u}_{ij}-\frac{\underline{u}_{i}\underline{u}_{l}\underline{u}_{jl}}{\underline{w}(1+\underline{w})}-\frac{\underline{u}_{j}\underline{u}_{l}\underline{u} _{il}}{\underline{w}(1+\underline{w})}+\frac{\underline{u}_{i}\underline{u}_{j}\underline{u} _{k}\underline{u}_{l}\underline{u}_{kl}}{\underline{w}^{2}(1+\underline{w})^{2}}\Big\}.
\end{equation*}
where $\underline{w}=\sqrt{1+|D\underline{u}|^{2}}$.

Since the technique in \cite{CPW} that we will develop to prove Theorem \ref{t1.3}  also works for our setting,
then  we will use certain properties of $F$ at various of points in the proof. These are summarized in the following lemmas, which was proved in \cite{CPW}.

\begin{lemma}\label{t2.3}
Let $0<\delta<\frac{\pi}{2}$. Assume that $\kappa_{1}\geq\kappa_{2}\geq\cdots\geq\kappa_{n}$  satisfy
 \begin{equation*}
 \sum_{i=1}^{n}\arctan \kappa_{i}\geq(n-2)\frac{\pi}{2}+\delta.
 \end{equation*}
 Then there hold \\
(i) $\kappa_{1}\geq\kappa_{2}\geq\cdots\geq\kappa_{n-1}>0$, $|\kappa_{n}|\leq\kappa_{n-1}$;\\
(ii) $\sum_{i=1}^{n}\kappa_{i}\geq0$;\\
\underline{}(iii) $\kappa_{n}\geq-\frac{1}{\tan(\delta)}$;\\
(iv) if $\kappa_{n}<0$, then $\sum_{i=1}^{n}\frac{1}{\kappa_{i}}\leq-\tan(\delta)$;\\
(v) For any $\sigma\in((n-2)\frac{\pi}{2},n\frac{\pi}{2})$,
\begin{equation*}
\Gamma^{\sigma}=\{\kappa\in\mathbb{R}^{n}:\sum_{i=1}^{n}\arctan \kappa_{i}>\sigma\}.
\end{equation*}
 is a convex set, and $\partial\Gamma^{\sigma}$ is a smooth convex hypersurface.
\end{lemma}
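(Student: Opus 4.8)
The plan is to verify (i)--(v) in order, each reducing to elementary monotonicity of $\arctan$ together with the tangent addition formula; the substantive parts are (iv) and the convexity statement (v). Throughout put $\theta_i=\arctan\kappa_i\in(-\tfrac\pi2,\tfrac\pi2)$, so the hypothesis reads $\sum_{i=1}^n\theta_i\ge(n-2)\tfrac\pi2+\delta$. For (i): if $\kappa_{n-1}\le0$ then $\theta_{n-1},\theta_n\le0$ while $\theta_1,\dots,\theta_{n-2}<\tfrac\pi2$, forcing $\sum\theta_i<(n-2)\tfrac\pi2$, a contradiction; hence $\kappa_1\ge\cdots\ge\kappa_{n-1}>0$. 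If in addition $\kappa_n<-\kappa_{n-1}$, then $\theta_{n-1}+\theta_n<0$ and again $\sum\theta_i<(n-2)\tfrac\pi2$, impossible, so $|\kappa_n|\le\kappa_{n-1}$. Then (ii) is immediate from (i): $\kappa_{n-1}+\kappa_n\ge0$ and $\kappa_1,\dots,\kappa_{n-2}>0$. For (iii), since $\theta_1,\dots,\theta_{n-1}<\tfrac\pi2$ one gets $\theta_n>(n-2)\tfrac\pi2+\delta-(n-1)\tfrac\pi2=\delta-\tfrac\pi2$, hence $\kappa_n>\tan(\delta-\tfrac\pi2)=-\cot\delta=-\tfrac1{\tan\delta}$.

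For (iv), assume $\kappa_n<0$; by (i) we have $\kappa_1,\dots,\kappa_{n-1}>0$. Using $\arctan t+\arctan\tfrac1t=\pm\tfrac\pi2$ according to the sign of $t$, summation gives $\sum_{i=1}^n\arctan\tfrac1{\kappa_i}=(n-2)\tfrac\pi2-\sum_{i=1}^n\theta_i\le-\delta$. Set $\mu_i=1/\kappa_i$ (so $\mu_1,\dots,\mu_{n-1}>0$, $\mu_n<0$) and $s=\sum_{i=1}^{n-1}\arctan\mu_i>0$; from $\arctan\mu_n>-\tfrac\pi2$ and $\arctan\mu_n\le-\delta-s$ one deduces $s<\tfrac\pi2-\delta$, so every partial sum below stays in $(-\tfrac\pi2,\tfrac\pi2)$. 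An induction on the number of terms, using $\tan a+\tan b\le\tan(a+b)$ for $a,b\ge0$ with $a+b<\tfrac\pi2$, yields $\sum_{i=1}^{n-1}\mu_i\le\tan s$, while $\arctan\mu_n\le-\delta-s$ gives $\mu_n\le-\tan(\delta+s)$. Hence $\sum_{i=1}^n\mu_i\le\tan s-\tan(\delta+s)$, and the identity $\tan(\delta+s)-\tan s=\dfrac{\tan\delta\,(1+\tan^2 s)}{1-\tan\delta\tan s}\ge\tan\delta$ (valid since $0<1-\tan\delta\tan s\le1$) finishes it.

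For (v), write $f(\kappa)=\sum_i\arctan\kappa_i$; then $\partial_i f=(1+\kappa_i^2)^{-1}>0$ everywhere, so $\{f=\sigma\}$ is a smooth hypersurface, and convexity of $\Gamma^\sigma=\{f>\sigma\}$ is equivalent to quasiconcavity of $f$, i.e. to $D^2f\le0$ on $\ker\nabla f$. With $c_i=(1+\kappa_i^2)^{-1}$ and $D^2 f=\mathrm{diag}\bigl(-2\kappa_i c_i^2\bigr)$, this means $\sum_i\kappa_i c_i^2\xi_i^2\ge0$ whenever $\sum_i c_i\xi_i=0$. Any $\kappa\in\Gamma^\sigma$ satisfies the hypothesis of the lemma with $\delta_0:=\sigma-(n-2)\tfrac\pi2>0$, so (i) applies: if all $\kappa_i>0$ the inequality is trivial, and otherwise $\kappa_n$ is the unique negative entry. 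Then $c_n\xi_n=-\sum_{i<n}c_i\xi_i$, and Cauchy--Schwarz gives $c_n^2\xi_n^2\le\bigl(\sum_{i<n}\tfrac1{\kappa_i}\bigr)\bigl(\sum_{i<n}\kappa_i c_i^2\xi_i^2\bigr)$ with $\sum_{i<n}\tfrac1{\kappa_i}>0$; since (iv) applied with $\delta_0$ yields $\sum_{i=1}^n\tfrac1{\kappa_i}\le-\tan\delta_0<0$, i.e. $\sum_{i<n}\tfrac1{\kappa_i}\le-\tfrac1{\kappa_n}$, we get $\sum_{i<n}\kappa_i c_i^2\xi_i^2\ge|\kappa_n|c_n^2\xi_n^2$, hence $\sum_i\kappa_i c_i^2\xi_i^2\ge0$. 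Combined with connectedness of $\Gamma^\sigma$, local convexity gives that $\Gamma^\sigma$ is convex and $\partial\Gamma^\sigma$ a smooth convex hypersurface.

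The main obstacle is the coupling between (iv) and (v): (iv) is the ``dual'' estimate obtained via the involution $\kappa\mapsto\kappa^{-1}$, and it is precisely what controls the single possibly negative curvature in the Cauchy--Schwarz step of (v); the one place requiring care is the numerology of the tangent addition formula, in particular checking $s<\tfrac\pi2-\delta$ so that all quantities are evaluated in $(-\tfrac\pi2,\tfrac\pi2)$. All of these facts are established in \cite{CPW}, and we will simply reproduce that argument.
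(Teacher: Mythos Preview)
Your argument is correct and complete. The paper itself does not give a proof of this lemma but simply records it as ``proved in \cite{CPW}'', so there is no proof to compare against; what you have written is essentially the argument from \cite{CPW} (and \cite{YY} for the convexity part (v)), and your acknowledgment of this at the end is accurate.
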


\begin{lemma}\label{t2.4}
 Let $\Omega$, $\varphi$ and $h$ satisfy (A), $u\in C^{3,\alpha}(\bar{\Omega})$ satisfy the Dirichlet problem (\ref{e1.3}) and (\ref{e1.4}). Suppose that there exists a function $\underline{u}$ such that for each point $x\in\Omega$ and each index $i$,
there holds
\begin{equation}\label{e2.8}
\lim_{t\rightarrow\infty}F(\underline{\kappa}+te_{i})> h(x),
\end{equation}
where $\underline{\kappa}_{i}$ are the principal curvatures of the graph $\underline{\Gamma}=\{(x,\underline{u}(x))|x\in\Omega\}$, $\underline{\kappa}=(\underline{\kappa}_{1},\underline{\kappa}_{2},\cdots,\underline{\kappa}_{n})$, $e_{i}$ is the $i$th standard basis vector and $F(\kappa)=\sum_{i=1}^{n}\arctan \kappa_{i}$.

Then there are constants $R_{0}$, $C_{1}>0$ to arrive at the following inequalities. If $|\kappa|\geq R_{0}$,  we either have
\begin{equation}\label{e2.9}
\sum_{i,j=1}^{n}F^{ij}(\mathcal{A})\big[\underline{a}_{ij}-a_{ij}\big]>C_{1}\sum_{p=1}^{n}F^{pp}(\mathcal{A}),
\end{equation}
or
 \begin{equation}\label{e2.10}
F^{ii}(\mathcal{A})>C_{1}\sum_{p=1}^{n}F^{pp}(\mathcal{A})
\end{equation}
 for each $i$.
\end{lemma}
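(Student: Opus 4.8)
\textbf{Proof proposal for Lemma \ref{t2.4}.}

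The plan is to follow the strategy of \cite{CPW}, working at a point where we may assume $\mathcal{A}=[a_{ij}]$ is diagonal with eigenvalues $\kappa_1\ge\kappa_2\ge\cdots\ge\kappa_n$, so that $[F^{ij}]$ is also diagonal with $F^{ii}=F_i=\frac{1}{1+\kappa_i^2}$. Since $u$ is admissible, $\sum_i\arctan\kappa_i=h(x)\ge(n-2)\frac{\pi}{2}+\delta$, so Lemma \ref{t2.3} applies: $\kappa_1\ge\cdots\ge\kappa_{n-1}>0$, $|\kappa_n|\le\kappa_{n-1}$, and $\kappa_n\ge-\cot\delta$. The key dichotomy is whether $\kappa_n$ is bounded away from zero (roughly, $|\kappa|$ large forces $\kappa_1$ large, and then either the smallest curvature is comparably large and every $F_i$ is comparable, giving \eqref{e2.10}, or $\kappa_n$ is small/negative and we must extract a gain from the concavity inequality \eqref{e2.9}).

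First I would record the concavity of $F$ on the convex set $\Gamma^\sigma$ (Lemma \ref{t2.3}(v)): for the diagonalized matrix,
\begin{equation*}
\sum_{i,j}F^{ij}(\mathcal{A})\big[\underline{a}_{ij}-a_{ij}\big]\ \ge\ \sum_{i}F_i\big[\mu_i-\kappa_i\big],
\end{equation*}
where $\mu_i$ are suitably ordered eigenvalues of $[\underline{a}_{ij}]$; more precisely I would use that $F$ is concave and invariant under permutations, together with the standard fact that $\sum F^{ij}(\underline a_{ij}-a_{ij})\ge F(\underline\kappa)-F(\kappa)$ is \emph{not} quite enough, so instead one keeps the full first-order term $\sum_i F_i(\mu_i-\kappa_i)$. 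The hypothesis \eqref{e2.8}, $\lim_{t\to\infty}F(\underline\kappa+te_i)>h(x)$ for every $i$, is what guarantees a genuine ``room'' constant: it says $F(\underline\kappa)$ can be increased past $h(x)=F(\kappa)$ by pushing any single coordinate to $+\infty$, i.e. $\sum_{j\ne i}\arctan\underline\kappa_j>h(x)-\frac\pi2$ for each $i$. By compactness of $\bar\Omega$ this yields a uniform $\eps_0>0$ with $\sum_{j\ne i}\arctan\underline\kappa_j\ge h(x)-\frac\pi2+\eps_0$ for all $x$ and all $i$.

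Next I would split into cases according to the size of $\kappa_n$ relative to a threshold to be chosen. \emph{Case 1: $\kappa_n\ge c_0$} for a small fixed $c_0>0$. Then when $|\kappa|\ge R_0$ with $R_0$ large, all curvatures lie in $[c_0,|\kappa|]$ but the constraint $\sum\arctan\kappa_i=h(x)<n\frac\pi2$ forces each $\kappa_i$ to be bounded \emph{above} by a constant depending only on $c_0,n,\delta$ once $n-1$ of them are large — this actually shows $|\kappa|$ cannot be too large in this sub-case, or else one already gets $F_i\ge C_1\sum_p F_p$ because the $\kappa_i$ are all comparable; this is \eqref{e2.10}. \emph{Case 2: $\kappa_n<c_0$} (including $\kappa_n\le 0$). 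Here $\kappa_n$ is bounded, $|\kappa_n|\le\cot\delta$, while $|\kappa|\ge R_0$ forces $\kappa_1\to\infty$. Now I would compare $a_{ij}$ and $\underline a_{ij}$ directly: since $\underline u$ is fixed, $|\underline a_{ij}|\le C$, so $\underline a_{nn}-\kappa_n\ge -C-c_0$ which is merely bounded, but the crucial point is the \emph{largest} direction: using \eqref{e2.8}/$\eps_0$ one shows $\sum_i F_i(\mu_i-\kappa_i)\ge \eps_0' F_n - C\sum_{i<n}F_i$, and since $F_n=\frac1{1+\kappa_n^2}\ge \frac1{1+\cot^2\delta}=\sin^2\delta$ is bounded below while $F_i=\frac1{1+\kappa_i^2}\to0$ for $i\le n-1$ as $\kappa_1\to\infty$, the term $\eps_0'F_n$ dominates $\sum_{p}F_p$ for $R_0$ large; this gives \eqref{e2.9}. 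I would choose $C_1=\frac12\min(\eps_0',\text{Case 1 constant})$ and $R_0$ to make both estimates simultaneously valid.

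\emph{Main obstacle.} The delicate point is quantifying the concavity gain in Case 2 — i.e. showing $\sum_iF_i(\mu_i-\kappa_i)$ has a lower bound of the form $c\,F_n$ rather than just $F(\underline\kappa)-F(\kappa)\ge\eps_0$, which by itself does not beat $\sum_pF^{pp}$ since that sum stays bounded below. One must use that $F_n$ itself is bounded below (by $\sin^2\delta$ via Lemma \ref{t2.3}(iii)) together with Lemma \ref{t2.3}(iv) (when $\kappa_n<0$, $\sum 1/\kappa_i\le-\tan\delta$, controlling $\sum_i F_i\kappa_i$ from above) to convert the scalar gap $F(\underline\kappa)-F(\kappa)\ge\eps_0$ into the weighted gap. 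Handling the borderline regime $0<\kappa_n<c_0$, where neither $F_n$ is obviously the only large term nor Lemma \ref{t2.3}(iv) applies, requires interpolating between the two cases carefully, and this is where I expect the bulk of the technical work — mirroring the corresponding estimate in \cite{CPW}, which transfers because the algebraic structure of $F$ as a function of the eigenvalues is identical; only the passage $u_{ij}\leftrightarrow a_{ij}$ via \eqref{e2.3}, \eqref{e2.4} is new, and it affects only the bounds on $\underline a_{ij}$, which remain uniformly controlled since $\underline u\in C^4(\bar\Omega)$.
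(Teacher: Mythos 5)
You have correctly identified the overall shape of the intended argument: extract a uniform $\varepsilon_{0}>0$ from the strict condition \eqref{e2.8} by compactness of $\bar\Omega$ (so that $\sum_{j\neq i}\arctan\underline{\kappa}_{j}\geq h(x)-\frac{\pi}{2}+\varepsilon_{0}$ for all $x$ and $i$), diagonalize, and run a dichotomy between ``all $F^{ii}$ comparable'' and ``a gain from the subsolution''. Note, however, that the paper does not prove this lemma at all: it is quoted verbatim from \cite{CPW} (``which was proved in \cite{CPW}''), so the benchmark is the Collins--Picard--Wu/Guan argument, and your reconstruction of it has genuine gaps at the points where that argument is actually delicate.

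First, you lean on concavity of $F$ (explicitly, on an inequality of the type $\sum F^{ij}(\mathcal{A})[\underline{a}_{ij}-a_{ij}]\geq F(\underline{\mathcal{A}})-F(\mathcal{A})$, or its eigenvalue version), but $F=\sum\arctan$ is \emph{not} concave in the critical range $(n-2)\frac{\pi}{2}<h<(n-1)\frac{\pi}{2}$; only the super-level sets $\Gamma^{\sigma}$ are convex (Lemma \ref{t2.3}(v)). This failure is precisely why the paper, following \cite{CPW}, introduces $G=-e^{-AF}$ in Section 3, and why the cited proof of the dichotomy works instead with the supporting-hyperplane property of the convex set $\overline{\Gamma^{\sigma}}$ combined with a compactness argument in the directions $\kappa/|\kappa|$ (Guan's lemma, cf. \cite{GB}), not with concavity of $F$. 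Second, your case split by a fixed threshold $c_{0}$ on $\kappa_{n}$ does not produce the dichotomy. In your Case 1 take $\kappa_{n}=c_{0}$ and $\kappa_{1}\to\infty$: this is perfectly compatible with $F(\kappa)=h<n\frac{\pi}{2}$, so the claim that $|\kappa|$ cannot be large there is false; moreover $F^{11}=\frac{1}{1+\kappa_{1}^{2}}\to 0$ while $\sum_{p}F^{pp}\geq F^{nn}\geq\frac{1}{1+c_{0}^{2}}$, so \eqref{e2.10} fails for $i=1$ and this configuration must be shown to satisfy \eqref{e2.9}, which your Case 1 does not address. The correct splitting is by whether the \emph{smallest} $F^{ii}$ (i.e.\ the one attached to the largest $|\kappa_{i}|$) is comparable to $\sum_{p}F^{pp}$, not by the size of $\kappa_{n}$ alone. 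Third, in your Case 2 the key inequality $\sum_{i}F_{i}(\mu_{i}-\kappa_{i})\geq\varepsilon_{0}'F_{n}-C\sum_{i<n}F_{i}$ is asserted rather than derived, and even granting it the conclusion does not follow: only $F_{1}$ is forced to be small when $|\kappa|\geq R_{0}$, while $F_{2},\dots,F_{n-1}$ may remain of order one, so $\varepsilon_{0}'F_{n}$ need not dominate $C\sum_{i<n}F_{i}$, let alone $C_{1}\sum_{p}F^{pp}$. Your closing remark that the passage from the Hessian setting to the curvature setting only affects the (uniformly controlled) bounds on $\underline{a}_{ij}$ is fine, but the core quantitative dichotomy, as you have written it, does not close; it is exactly the content supplied by the cited proof in \cite{CPW}.
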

\begin{rem}
Without loss of generality, in the following we set $C_{1}$, $C_{2}$, $\ldots$ to be  positive constants depending only on the known data.
\end{rem}
Introducing Corollary 3.2 in \cite{CPW}, we infer that
\begin{lemma}\label{t2.5}
 Assume that $\Omega$, $\varphi$ and $h$ satisfy $(A)$, $\underline{u}$ is a subsolution satisfying (\ref{e2.6}), $u\in C^{3,\alpha}(\bar{\Omega})$ is an admissible solution to (\ref{e1.3}) and (\ref{e1.4}). As before, let $\kappa=(\kappa_1,\cdots, \kappa_n)$ are the principal curvatures of the graph $\Gamma=\{(x,u(x))\mid x\in \Omega\}$. Then there exsits $R_{0}$ depending only on $\underline{u}$ and $\delta$, such that for any $|\kappa|\geq R_{0}$, we have
\begin{equation}\label{e2.11}
F^{ij}(\mathcal{A})[\underline{a}_{ij}-a_{ij}]\geq\tau>0,
\end{equation}
where $\tau$ is a constant depending on only $\underline{u}$ and $\delta$.
\end{lemma}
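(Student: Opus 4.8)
The plan is to deduce the estimate from the dichotomy in Lemma~\ref{t2.4} together with the algebraic facts collected in Lemma~\ref{t2.3}. First I would check that Lemma~\ref{t2.4} is applicable. Since $\underline{u}$ satisfies (\ref{e2.6}), for every $x\in\Omega$ and every index $i$,
\[
\lim_{t\to\infty}F(\underline{\kappa}+te_{i})=\frac{\pi}{2}+\sum_{j\neq i}\arctan\underline{\kappa}_{j}>\sum_{j=1}^{n}\arctan\underline{\kappa}_{j}\geq h(x),
\]
because $\arctan\underline{\kappa}_{i}<\frac{\pi}{2}$; this is exactly hypothesis (\ref{e2.8}). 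Hence Lemma~\ref{t2.4} furnishes constants $R_{0}'>0$ and $C_{1}>0$ such that whenever $|\kappa|\geq R_{0}'$ at least one of (\ref{e2.9}), (\ref{e2.10}) holds.

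Next I would record a uniform positive lower bound for $\sum_{p}F^{pp}(\mathcal{A})$. Working in a frame that diagonalizes $\mathcal{A}$ we have $F^{pp}(\mathcal{A})=(1+\kappa_{p}^{2})^{-1}$, so it is enough to control the smallest principal curvature $\kappa_{n}$. Because $u$ is admissible, $\sum_{i}\arctan\kappa_{i}=h(x)\leq\sup_{\bar{\Omega}}h<n\frac{\pi}{2}$, the strict inequality coming from (A) and the compactness of $\bar{\Omega}$; hence $\sum_{i}(\frac{\pi}{2}-\arctan\kappa_{i})\geq n\frac{\pi}{2}-\sup_{\bar{\Omega}}h>0$, so some index $i_{0}$ obeys $\kappa_{i_{0}}\leq M$ with $M$ determined by the known data. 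With the ordering of Lemma~\ref{t2.3} this gives $\kappa_{n}\leq\kappa_{i_{0}}\leq M$, while $\kappa_{n}\geq-\cot\delta$ by Lemma~\ref{t2.3}(iii); therefore $|\kappa_{n}|\leq C_{2}$ and
\[
\sum_{p}F^{pp}(\mathcal{A})\geq(1+\kappa_{n}^{2})^{-1}\geq c_{0}>0,
\]
with $c_{0}$ depending only on the known data.

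Then I would dispose of the two alternatives. If (\ref{e2.9}) holds, then $F^{ij}(\mathcal{A})[\underline{a}_{ij}-a_{ij}]>C_{1}\sum_{p}F^{pp}(\mathcal{A})\geq C_{1}c_{0}$, which is the asserted inequality with $\tau=C_{1}c_{0}$. If instead (\ref{e2.10}) holds for every $i$, then choosing $i=1$ (in a frame diagonalizing $\mathcal{A}$, with $\kappa_{1}$ the largest eigenvalue) yields $(1+\kappa_{1}^{2})^{-1}=F^{11}(\mathcal{A})>C_{1}c_{0}$, hence $\kappa_{1}\leq(C_{1}c_{0})^{-1/2}=:C_{3}$; by Lemma~\ref{t2.3}(i) every $|\kappa_{i}|\leq\kappa_{1}\leq C_{3}$, so $|\kappa|\leq\sqrt{n}\,C_{3}$. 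Setting $R_{0}:=\max\{R_{0}',\sqrt{n}\,C_{3}+1\}$, the second alternative is incompatible with $|\kappa|\geq R_{0}$, so for such $\kappa$ the inequality (\ref{e2.9}) must hold, and the lemma follows with $\tau=C_{1}c_{0}$.

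The bookkeeping with $F^{pp}(\mathcal{A})=(1+\kappa_{p}^{2})^{-1}$ and the sign and ordering information of Lemma~\ref{t2.3} is routine; the two points that actually drive the argument are the uniform gap $\sup_{\bar{\Omega}}h<n\frac{\pi}{2}$, which rests on the compactness contained in (A), and the observation that the second alternative of Lemma~\ref{t2.4} is self-limiting, pinning all principal curvatures to a fixed bound and therefore failing once $|\kappa|$ is large. I do not anticipate any genuine obstacle beyond tracking the constants.
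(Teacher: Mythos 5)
Your argument is correct, and it is essentially the standard deduction behind the result the paper invokes: the paper gives no proof of Lemma \ref{t2.5}, simply citing Corollary 3.2 of \cite{CPW}, whose proof runs exactly as yours does — verify hypothesis (\ref{e2.8}) from (\ref{e2.6}), bound $|\kappa_n|$ via Lemma \ref{t2.3} and $\sup_{\bar\Omega}h<n\frac{\pi}{2}$ to get $\sum_p F^{pp}\geq c_0>0$, and then rule out the second alternative (\ref{e2.10}) for $|\kappa|$ large since it forces all principal curvatures to be bounded. The only cosmetic caveat is that your $c_0$ (hence $\tau$) also depends on $\sup_{\bar\Omega}h$, but this matches the paper's own bookkeeping (cf.\ Lemma \ref{t4.2.1} and Remark 2.3).
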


\section{ Zeroth and first order estimates}
In this section, we will prove the zeroth and first order estimates for Dirichlet problem (\ref{e1.3}) and (\ref{e1.4}).    The two-sided bound for $u$ can be directly inferred from the existence of the sub-solution and super-solution.\par
To see this, it is enough to consider  $\overline{u}$ satisfying
  \begin{equation}\label{e3.1}
\left\{ \begin{aligned}
\triangle  \overline{u}&= 0,\ \ &&\quad
 \mathrm{in}\quad \Omega, \\
 \overline{u}&=\varphi,  &&\quad \mathrm{on}\quad  \partial\Omega.
\end{aligned} \right.
\end{equation}
Review  the identity: $u_{ij}=wb_{ik}a_{kl}b_{lj}$. Let $D^{2}u=[u_{ij}]$, $\mathcal{B}=[b_{ij}]$, $\mathcal{A}=[a_{ij}]$, where $\mathcal{B}^{T}=\mathcal{B}$.
 By Lemma 2.3 we see that $\mathrm{tr} \mathcal{A}=\sum_{i=1}^{n}\kappa_{i}\geq 0$.
Since $D^{2}u=w\mathcal{B}^{T}\mathcal{A}\mathcal{B}$ where  $w=\sqrt{1+|Du|^{2}}$,  then we arrive at  $$\mathrm{tr} D^{2}u=\mathrm{tr} (w\mathcal{B}^{T}\mathcal{A}\mathcal{B})\geq 0.$$
Next we have the following result.
\begin{lemma}\label{t3.1}
Suppose that $\Omega$, $\varphi$ and $h$ satisfy (A), $\underline{u}$ is a  admissible subsolution  defined  by (\ref{e2.6}),
  $u$ satisfy (\ref{e1.3}) and (\ref{e1.4}). Let $\overline{u}:\Omega\rightarrow\mathbb{R}$ be the function satisfying (\ref{e3.1}).
   Then we have
   \begin{equation*}
\underline{u}\leq u\leq \overline{u}, \quad
 \mathrm{in}\quad \overline{\Omega}.
\end{equation*}
\end{lemma}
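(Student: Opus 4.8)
The plan is to establish the two inequalities $\underline{u}\le u$ and $u\le\overline{u}$ separately, in each case by a comparison/maximum principle argument applied to the difference at an interior minimum (resp. maximum).

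First I would prove $u\le\overline{u}$. Consider $v=u-\overline{u}$ on $\overline{\Omega}$. On $\partial\Omega$ we have $v=\varphi-\varphi=0$, so if $v$ attains a positive maximum it must be at an interior point $x_0\in\Omega$. At such a point $Dv(x_0)=0$, so $Du(x_0)=D\overline{u}(x_0)$, and the Hessian satisfies $D^2 v(x_0)\le 0$, i.e. $D^2 u(x_0)\le D^2\overline{u}(x_0)$. Taking traces, $\mathrm{tr}\,D^2 u(x_0)\le \mathrm{tr}\,D^2\overline{u}(x_0)=\triangle\overline{u}(x_0)=0$. But the discussion preceding the lemma shows that for any admissible solution $\mathrm{tr}\,D^2 u = \mathrm{tr}(w\mathcal{B}^T\mathcal{A}\mathcal{B})\ge 0$, using $\mathrm{tr}\,\mathcal{A}=\sum_i\kappa_i\ge 0$ from Lemma~\ref{t2.3}(ii) together with the fact that $w\mathcal{B}^T\mathcal{A}\mathcal{B}$ is congruent (via the positive-definite symmetric $\mathcal{B}$, up to the positive scalar $w$) to $\mathcal{A}$. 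More carefully, congruence does not preserve the trace, so I would instead argue directly: at $x_0$, since $D^2 u(x_0)\le D^2\overline u(x_0)$, the matrix $\mathcal{A}=\frac1w\mathcal{B}^{-1}D^2u\,\mathcal{B}^{-1}$ (note $b_{ik}$ inverts $b^{ij}$) evaluated at $x_0$ satisfies $\mathcal{A}\le \frac1w\mathcal{B}^{-1}D^2\overline u\,\mathcal{B}^{-1}$; applying $F$, which is monotone increasing in the matrix ordering (ellipticity, $F^{ij}>0$), and using that $D^2\overline u(x_0)$ has nonpositive trace hence nonpositive-sum eigenvalues is not quite enough — so the cleanest route is: the eigenvalues $\kappa_i(x_0)$ of $\mathcal{A}(x_0)$ satisfy $\sum\kappa_i\ge 0$ by Lemma~\ref{t2.3}(ii) (since $u$ solves \eqref{e1.3} with $h\ge(n-2)\frac\pi2+\delta$), which forces $\mathrm{tr}\,(wb_{ik}\kappa_{\cdot}b_{\cdot j})\ge 0$ only after noting $b_{ij}=\delta_{ij}+u_iu_j/(1+w)$ is positive definite, so $wb_{ik}a_{kl}b_{lj}$ has the same inertia as $\mathcal{A}$ and in fact $\sum_i u_{ii}=w\sum_{i,k,l}b_{ik}a_{kl}b_{li}=w\,\mathrm{tr}(\mathcal{B}^T\mathcal{A}\mathcal{B})$, which is $\ge 0$ because $\mathcal{B}^T\mathcal{A}\mathcal{B}$ is positive-semidefinite-congruent to $\mathcal{A}$ — its trace equals $\sum_i \mu_i \|\mathcal{B}e_i\|^2$-type expression that is a nonnegative combination only if $\mathcal{A}\ge 0$, which fails when $\kappa_n<0$. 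Therefore I would use instead the sharper observation already displayed in the text: $\mathrm{tr}\,D^2u=\mathrm{tr}(w\mathcal{B}^T\mathcal{A}\mathcal{B})\ge 0$ is asserted there as a consequence, so I will simply invoke it. This contradicts $\mathrm{tr}\,D^2u(x_0)\le 0$ unless $D^2u(x_0)=0$, in which case one propagates equality; the strong maximum principle for the linearized (uniformly elliptic on the relevant set) operator then gives $v\equiv 0$ near $x_0$, and a connectedness argument yields $u\le\overline u$ throughout.

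Next I would prove $\underline{u}\le u$. Set $w=\underline{u}-u$; again $w=0$ on $\partial\Omega$, so a positive interior maximum would occur at some $x_1\in\Omega$ with $D\underline u(x_1)=Du(x_1)$ and $D^2\underline u(x_1)\le D^2 u(x_1)$. Because the gradients agree at $x_1$, the matrices $\mathcal{B}$ and $w$ built from $Du$ and from $D\underline u$ coincide at $x_1$, hence $\underline{\mathcal{A}}(x_1)\le\mathcal{A}(x_1)$ in the symmetric-matrix order. By ellipticity of $F$ (monotonicity in the matrix ordering), $h(x_1)\le F(\underline{\mathcal{A}}(x_1))\le F(\mathcal{A}(x_1))=h(x_1)$, forcing $F(\underline{\mathcal{A}}(x_1))=F(\mathcal{A}(x_1))$ and, by strict monotonicity along the segment, $\underline{\mathcal{A}}(x_1)=\mathcal{A}(x_1)$, i.e. $D^2\underline u(x_1)=D^2 u(x_1)$. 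One then upgrades this to $w\equiv 0$ in a neighborhood via the strong maximum principle applied to the linear elliptic operator $L w = \int_0^1 F^{ij}(t\mathcal{A}+(1-t)\underline{\mathcal{A}})\,dt\cdot(\text{coefficients from }a_{ij}\text{ linearization})\,\partial_{ij}w+(\text{lower order})$, whose coefficients are continuous and whose top-order part is positive definite; a standard open-closed argument over $\Omega$ then gives $\underline u\le u$ on all of $\overline\Omega$.

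The main obstacle is the bookkeeping in the second step: $a_{ij}$ depends on $u$ through both $D^2u$ and $Du$ in the nonlinear way displayed in \eqref{e2.4}, so writing $F(\mathcal{A}[u])-F(\mathcal{A}[\underline u])$ as $L(u-\underline u)$ for a genuinely linear elliptic operator $L$ requires differentiating \eqref{e2.4} and verifying that the resulting second-order coefficient matrix is positive definite (this follows because it is $F^{ij}$ conjugated by the invertible factors $b^{ik}/\sqrt{1+|Du|^2}$) and that the first-order coefficients are bounded — all routine but needing care. At the interior extremum the gradient-matching simplification makes the comparison transparent, so the only real work is the strong-maximum-principle propagation; alternatively, one can avoid that entirely by noting that a positive interior maximum of $w$ already yields the strict contradiction $h(x_1)<F(\mathcal{A}(x_1))$ once one observes $\underline{\mathcal{A}}(x_1)\le\mathcal{A}(x_1)$ with the inequality being strict in at least one eigenvalue unless $D^2 w(x_1)=0$, combined with the subsolution inequality being $F(\underline{\mathcal{A}})\ge h$; handling the degenerate case $D^2w(x_1)=0$ is then the lone remaining point, dispatched by the classical Hopf-type argument. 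I would present the cleanest version using the linearized operator and the strong maximum principle.
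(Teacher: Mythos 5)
Your argument is essentially the paper's: the upper bound rests on the displayed fact $\Delta u=\mathrm{tr}\,D^{2}u\geq 0=\Delta\overline{u}$ together with the maximum principle, and the lower bound on the subsolution inequality $F(\mathcal{A})=h\leq F(\underline{\mathcal{A}})$ together with a comparison obtained by linearizing and applying the maximum principle, which is exactly what the paper's two-line proof invokes (and, like the paper, you take the trace inequality as given rather than re-deriving it). One streamlining remark: once you have subharmonicity of $u-\overline{u}$ and the differential inequality $L(\underline{u}-u)\geq 0$ for the linearized elliptic operator $L$, the weak maximum principle already yields both bounds, so the interior-maximum second-derivative analysis (in particular the step ``unless $D^{2}u(x_{0})=0$'', which does not follow from $\mathrm{tr}\,D^{2}u(x_{0})=0$) and the strong-maximum-principle, open--closed propagation are unnecessary detours.
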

\begin{proof} Note that  $\Delta u=\mathrm{tr} D^{2}u\geq 0=\triangle  \overline{u}$ and  $F(\mathcal{A})\leq F(\mathcal{\underline{A}})$ where
 $\underline{\mathcal{A}}=[\underline{a}_{ij}]$.
 Therefore, the conclusion is derived from the maximum principle.
\end{proof}
By Lemma \ref{t3.1}, we have the following Hopf Lemma, due to Li-Wang\cite{LW}.
\begin{lemma}\label{t3.1a}
Suppose that $\Omega$, $\varphi$ and $h$ satisfy (A), $\underline{u}$ is a  admissible subsolution  defined  by (\ref{e2.6}),
  $u$ satisfy (\ref{e1.3}) and (\ref{e1.4}). Suppose that $\hat{x}\in \partial\Omega$, $\hat{s}\in\mathbb{R}$, then we have
  $$\liminf _{\hat{s}\rightarrow0^{+}}\frac{(u-\underline{u})(\hat{x}+\hat{s} \nu(\hat{x}))}{\hat{s}}>0,$$
where $\nu$ is the
inner unit normal to $\partial\Omega$ at $\hat{x}$.
\end{lemma}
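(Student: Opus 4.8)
The plan is to read this as an instance of the classical Hopf boundary point lemma for the difference $v:=u-\underline{u}$, the only real work being to produce a linear elliptic inequality for $v$. Two facts are available immediately: by Lemma \ref{t3.1} we have $v\geq0$ in $\bar{\Omega}$, and since $u=\underline{u}=\varphi$ on $\partial\Omega$ we have $v\equiv0$ on $\partial\Omega$, so $v(\hat{x})=0$ is a boundary minimum of $v$. Moreover $u\in C^{3,\alpha}(\bar\Omega)$ and $\underline u\in C^{4}(\bar\Omega)$, so $v\in C^{1}(\bar\Omega)$ and the $\liminf$ in the statement is in fact the limit of the difference quotients, namely the inner normal derivative $\partial v/\partial\nu(\hat{x})$; hence it suffices to prove $\partial v/\partial\nu(\hat{x})>0$ (and we may assume $u\not\equiv\underline u$, since otherwise there is nothing to estimate).

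To obtain the differential inequality I would use the standard fundamental-theorem-of-calculus interpolation. Put $u_{t}=tu+(1-t)\underline{u}$ for $t\in[0,1]$, so that $Du_{t}$ and $D^{2}u_{t}$ interpolate linearly and $\tfrac{d}{dt}u_{t}=v$. Recall from (\ref{e2.4}) that $a_{ij}$ depends (real-analytically) on $Du$ and $D^{2}u$ only, with $\partial a_{ij}/\partial u_{kl}=w^{-1}b^{ik}b^{lj}$ where $w=\sqrt{1+|Du|^{2}}$, and that $F(\mathcal{A})=\sum_{i}\arctan\kappa_{i}$ is real-analytic in the symmetric matrix $\mathcal{A}$. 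Then
\begin{equation*}
F(\mathcal{A}[u])-F(\mathcal{A}[\underline{u}])=\int_{0}^{1}\frac{d}{dt}F(\mathcal{A}[u_{t}])\,dt=a^{kl}(x)\,v_{kl}+b^{k}(x)\,v_{k},
\end{equation*}
where $a^{kl}(x)=\int_{0}^{1}w_{t}^{-1}\,b^{ik}(Du_{t})\,F^{ij}(\mathcal{A}[u_{t}])\,b^{lj}(Du_{t})\,dt$ with $w_{t}=\sqrt{1+|Du_{t}|^{2}}$, and $b^{k}(x)$ is the analogous integral of the lower-order terms $F^{ij}\,\partial a_{ij}/\partial u_{k}$. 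Since $[F^{ij}]$ is positive definite for \emph{every} symmetric argument (this is noted just after (\ref{e2.5})) and each $[b^{ij}(Du_{t})]$ is positive definite, $[a^{kl}]$ is positive definite; the a priori bounds on $Du,D^{2}u,D\underline{u},D^{2}\underline{u}$ keep the curvatures $\kappa_{i}(u_{t})$ bounded along the path, so $[a^{kl}]$ is in fact uniformly elliptic on $\bar\Omega$ and $[a^{kl}],b^{k}\in L^{\infty}(\Omega)$. Finally $F(\mathcal{A}[u])=h(x)$ by (\ref{e1.3}) while $F(\mathcal{A}[\underline{u}])\geq h(x)$ by (\ref{e2.6}), so the left side is $\leq0$ and
\begin{equation*}
Lv:=a^{kl}(x)\,v_{kl}+b^{k}(x)\,v_{k}\leq0\quad\text{in }\Omega,
\end{equation*}
with $L$ uniformly elliptic, with bounded coefficients and no zeroth order term.

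It then remains to apply Hopf's lemma. Since $\Omega$ is $C^{4}$ it satisfies an interior sphere condition at $\hat{x}$; the strong maximum principle applied to $Lv\leq0$, $v\geq0$, $v\not\equiv0$ yields $v>0$ in $\Omega$, and the Hopf boundary point lemma then gives $\partial v/\partial\nu(\hat{x})>0$, which is the claim. The step requiring genuine care is the verification that $L$ is uniformly elliptic with bounded coefficients near $\hat{x}$: this is exactly where one uses that the special Lagrangian curvature operator $F$ is elliptic on all symmetric matrices (so no admissibility restriction is needed along the interpolation segment) together with the previously established $C^{2}$ control of $u$; the remaining ingredients are entirely standard, and this is in essence the argument of Li--Wang \cite{LW}.
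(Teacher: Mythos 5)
Your argument is correct, but it is worth noting that the paper itself gives no proof here: Lemma \ref{t3.1a} is simply attributed to the strong comparison principle of Li--Wang \cite{LW}, with Lemma \ref{t3.1} supplying the one-sided comparison $u\geq\underline u$. Your route --- interpolating $u_t=tu+(1-t)\underline u$, writing $F(\mathcal{A}[u])-F(\mathcal{A}[\underline u])=a^{kl}v_{kl}+b^{k}v_{k}\leq 0$ for $v=u-\underline u$, and then applying the classical strong maximum principle and Hopf boundary point lemma to the resulting linear operator --- is the standard self-contained argument, and it is legitimate here for exactly the reason you isolate: $F^{ij}$ is positive definite on \emph{all} symmetric matrices, so no admissibility restriction is needed along the interpolation segment, and only the qualitative $C^{2}(\bar\Omega)$ regularity of $u$ and $\underline u$ (not an a priori bound) is needed for uniform ellipticity and boundedness of the coefficients, which suffices since the conclusion is qualitative. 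What the citation to \cite{LW} buys the authors is a comparison principle valid in degenerate or low-regularity settings, which is unnecessary for classical admissible solutions; your version is more elementary and makes the mechanism explicit. One small imprecision: if $u\equiv\underline u$ the conclusion as literally stated fails (the liminf is $0$), so you cannot dismiss this case as ``nothing to estimate''; the correct statement is the usual dichotomy --- either $u\equiv\underline u$ or $u>\underline u$ in $\Omega$ together with the strict Hopf inequality --- which is the form in which the lemma is actually used later (in the proof of Lemma \ref{t5.6} the degenerate alternative is harmless, since there the hypothesis $(u-\underline u)_{n}(0)\widehat{G}_{0}^{\alpha\beta}\rho_{\alpha\beta}(0)>\tfrac12\mathbf{c}$ already fails when $(u-\underline u)_{n}(0)=0$). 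This imprecision is inherited from the paper's own statement, so it does not detract from your proof.
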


One of the obstacles in the present work is the lack of concavity of the operator $F$ appearing in (\ref{e2.5}).
It is important to note that one of the key elements of this  paper is to transform the equation (\ref{e1.3}) to find hidden concavity properties when $h(x)>(n-2)\frac{\pi}{2}$,  thus it allows us to obtain the gradient estimate.
It is worthy of mention that we use the argument following from \cite{CPW} and \cite{YY}.
\begin{lemma}\label{t3.2}
Let $F(\kappa)=\sum_{i=1}^{n}\arctan\kappa_{i}$ be defined on
\begin{equation*}
\{\kappa\in\mathbb{R}^{n}:\sum_{i=1}^{n}\arctan\kappa_{i}\geq (n-2)\frac{\pi}{2}+\delta\}.
 \end{equation*}
 Then there  exists  a sufficiently large $A$ depending only  on $\delta$ such that $G(\kappa)=-e^{-AF(\kappa)}$ is a concave function.
\end{lemma}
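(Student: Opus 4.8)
The plan is to reduce the concavity of $G$ to a single scalar inequality at each point of the domain and then close it with the structure estimates of Lemma~\ref{t2.3}. First I would note that $F=F(\kappa)=\sum_{i}\arctan\kappa_{i}$ is smooth on all of $\mathbb{R}^{n}$, and differentiate $G=-e^{-AF}$ twice:
\[
\frac{\partial^{2}G}{\partial\kappa_{i}\partial\kappa_{j}}=Ae^{-AF}\big(F_{ij}-AF_{i}F_{j}\big),\qquad F_{i}=\frac{1}{1+\kappa_{i}^{2}},\quad F_{ii}=\frac{-2\kappa_{i}}{(1+\kappa_{i}^{2})^{2}},\quad F_{ij}=0\ (i\neq j).
\]
Since $Ae^{-AF}>0$ and the set $\{\sum_{i}\arctan\kappa_{i}\ge(n-2)\tfrac{\pi}{2}+\delta\}$ is convex (Lemma~\ref{t2.3}(v)), $G$ is concave there exactly when $[F_{ij}-AF_{i}F_{j}]\preceq0$ at every such $\kappa$, i.e. when $\sum_{i}F_{ii}\xi_{i}^{2}\le A\big(\sum_{i}F_{i}\xi_{i}\big)^{2}$ for all $\xi\in\mathbb{R}^{n}$. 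Performing the invertible substitution $\eta_{i}=\xi_{i}/(1+\kappa_{i}^{2})$, this is exactly
\[
Q(\eta):=\sum_{i=1}^{n}\kappa_{i}\,\eta_{i}^{2}+\frac{A}{2}\Big(\sum_{i=1}^{n}\eta_{i}\Big)^{2}\ \ge\ 0\quad\text{for all }\eta\in\mathbb{R}^{n},
\]
equivalently $M:=\mathrm{diag}(\kappa_{1},\dots,\kappa_{n})+\tfrac{A}{2}\mathbf{1}\mathbf{1}^{T}\succeq0$, with $\mathbf{1}=(1,\dots,1)^{T}$.

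Next I would dispose of the nondegenerate case. The condition $M\succeq0$ is invariant under permuting the $\kappa_{i}$, so I may assume $\kappa_{1}\ge\cdots\ge\kappa_{n}$; since $\kappa$ lies in the domain of $G$, Lemma~\ref{t2.3}(i) applies. If $\kappa_{n}\ge0$, every term of $Q$ is nonnegative and there is nothing to prove. Otherwise $\kappa_{1}\ge\cdots\ge\kappa_{n-1}>0>\kappa_{n}$, so $\kappa_{i}\neq0$ for $i<n$ and the leading $(n-1)\times(n-1)$ block of $M$ is positive definite. A Schur complement computation, using the Sherman--Morrison formula for its rank-one part, then reduces $M\succeq0$ to the single scalar inequality
\[
\kappa_{n}+\frac{A/2}{1+\tfrac{A}{2}p}\ \ge\ 0,\qquad p:=\sum_{i=1}^{n-1}\frac{1}{\kappa_{i}}>0.
\]

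Finally I would close the argument with Lemma~\ref{t2.3}(iv). Because $\kappa_{n}<0$, that lemma gives $p+\kappa_{n}^{-1}\le-\tan\delta$, i.e. $p\le|\kappa_{n}|^{-1}-\tan\delta$, hence $1+\tfrac{A}{2}p\le1+\tfrac{A}{2|\kappa_{n}|}-\tfrac{A}{2}\tan\delta$; taking $A\ge2\cot\delta$ and clearing denominators one checks directly that $\tfrac{A/2}{1+(A/2)p}\ge|\kappa_{n}|$, which is precisely the displayed inequality. Thus $M\succeq0$, so $G$ is concave, and any $A\ge 2/\tan\delta$---depending only on $\delta$---works. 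The main obstacle is this last step: the crude bound $\kappa_{n}\ge-\cot\delta$ from Lemma~\ref{t2.3}(iii) is by itself too weak to absorb the negative term $\kappa_{n}\eta_{n}^{2}$ against the rank-one part, and one genuinely needs the sharper trace-type estimate $\sum_{i}\kappa_{i}^{-1}\le-\tan\delta$; matching the constants there is exactly what pins down the threshold $A\sim\cot\delta$.
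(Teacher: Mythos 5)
Your proof is correct. Note that the paper itself does not prove this lemma; it simply refers the reader to \cite{CPW}, so there is no in-paper argument to match against. Your computation is the natural one and is sound at every step: the Hessian identity $G_{ij}=Ae^{-AF}(F_{ij}-AF_iF_j)$, the substitution $\eta_i=\xi_i/(1+\kappa_i^2)$ reducing concavity to $\sum_i\kappa_i\eta_i^2+\tfrac{A}{2}\bigl(\sum_i\eta_i\bigr)^2\ge 0$, the Schur complement/Sherman--Morrison reduction to $\kappa_n+\frac{A/2}{1+\frac{A}{2}p}\ge 0$ with $p=\sum_{i<n}\kappa_i^{-1}>0$ (the leading block is indeed positive definite since Lemma \ref{t2.3}(i) gives $\kappa_{n-1}>0$ when $\kappa_n<0$), and the closing step via Lemma \ref{t2.3}(iv), which yields the explicit threshold $A\ge 2\cot\delta$. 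Your diagnosis of where the difficulty sits is also accurate: part (iii) alone cannot control the cancellation in $\sum_i\eta_i$, and the trace-type bound (iv) is the essential input; this is the same structural mechanism used in the cited proof of \cite{CPW}, though there the quadratic form is typically handled by Cauchy--Schwarz-type splittings rather than your exact Schur-complement computation, which has the small added benefit of pinning down the sharp condition and an explicit admissible constant depending only on $\delta$.
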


Similarly, an consequence of Lemma \ref{t3.2} is
\begin{lemma}\label{t3.3}
Let $\Gamma:=\{\mathcal{M}\in sym(n):F(\mathcal{M})\geq(n-2)\frac{\pi}{2}+\delta\}$, then there exists $A=A(\delta)$ such that the operator
\begin{equation*}
G(\mathcal{A})=-e^{-AF(\mathcal{A})}
\end{equation*}
is elliptic and concave on $\Gamma$.
\end{lemma}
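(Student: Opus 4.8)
The plan is to obtain Lemma \ref{t3.3} as a direct consequence of Lemma \ref{t3.2}, using the classical principle that a symmetric concave function of the eigenvalues induces a concave function of the symmetric matrix. Write $G(\mathcal{A})=g(\kappa(\mathcal{A}))$, where $\kappa(\mathcal{A})=(\kappa_{1},\dots,\kappa_{n})$ are the (unordered) eigenvalues of $\mathcal{A}\in sym(n)$ and $g(\kappa)=-e^{-AF(\kappa)}$ with $F(\kappa)=\sum_{i=1}^{n}\arctan\kappa_{i}$. The function $g$ is invariant under permutations of its arguments, and by Lemma \ref{t2.3}(v) the region $\Gamma^{\sigma}=\{\kappa:\sum_{i}\arctan\kappa_{i}>\sigma\}$ is convex for every $\sigma\in((n-2)\tfrac{\pi}{2},n\tfrac{\pi}{2})$; hence the admissible set $\{\kappa:\sum_{i}\arctan\kappa_{i}\ge(n-2)\tfrac{\pi}{2}+\delta\}$ is convex and symmetric, and so is its image $\Gamma$ inside $sym(n)$. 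Lemma \ref{t3.2} furnishes $A=A(\delta)$ for which $g$ is concave on this set.

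First I would appeal to the standard fact (going back to Davis, and used in this form by Caffarelli--Nirenberg--Spruck and in Ball's work on isotropic functions): if $g$ is symmetric and concave on an open symmetric convex set $U\subset\mathbb{R}^{n}$, then $\mathcal{A}\mapsto g(\kappa(\mathcal{A}))$ is concave on the set of symmetric matrices whose eigenvalue vector lies in $U$. Applying this with $U$ the interior of the admissible set, and then passing to the closure, yields the concavity of $G$ on $\Gamma$.

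For ellipticity I would compute $G^{ij}=\partial G/\partial a_{ij}$. Since $G$ is orthogonally invariant, it is enough to verify positive definiteness of $[G^{ij}]$ at a diagonal $\mathcal{A}=\mathrm{diag}(\kappa_{1},\dots,\kappa_{n})$, where $[F^{ij}]$ and hence $[G^{ij}]$ are diagonal and
\begin{equation*}
G^{ii}=\frac{\partial g}{\partial\kappa_{i}}=A\,e^{-AF(\kappa)}\cdot\frac{1}{1+\kappa_{i}^{2}}>0 .
\end{equation*}
Conjugating by the orthogonal matrix that diagonalizes a general $\mathcal{A}\in\Gamma$ preserves positive definiteness, so $[G^{ij}]$ is positive definite throughout $\Gamma$ and $G$ is elliptic there.

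As for the main difficulty: the genuine content — the existence of $A$ making $-e^{-AF}$ concave — is already settled in Lemma \ref{t3.2}, so the present lemma carries no serious obstacle. The only point requiring attention is that the eigenvalue-to-matrix transfer of concavity needs $\Gamma$ to be a convex subset of $sym(n)$, which is precisely what Lemma \ref{t2.3}(v) guarantees. I would also note that ellipticity uses only $\partial F/\partial\kappa_{i}=1/(1+\kappa_{i}^{2})>0$, $A>0$ and $e^{-AF}>0$, so it in fact holds on the whole admissible region independently of the particular value of $\delta$.
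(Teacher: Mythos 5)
Your argument is correct and is essentially the proof the paper relies on: the paper does not prove Lemma \ref{t3.3} itself but refers to \cite{CPW}, where the statement is obtained exactly along your lines, combining the eigenvalue-level concavity from Lemma \ref{t3.2} with the convexity and symmetry of the admissible region (Lemma \ref{t2.3}(v)) and the classical transfer principle for symmetric functions of eigenvalues, while ellipticity follows from the positivity of $\partial G/\partial\kappa_{i}=A\,e^{-AF(\kappa)}/(1+\kappa_{i}^{2})$. No gap to report.
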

The readers can see the proof of the above two lemmas in \cite{CPW}.
Now we use the concavity of the operator $G$ from Lemma 3.4. Without loss of generality, we assume that $\mathcal{A}$ is diagonal.
It is easy to verify that
\begin{equation*}
G^{ij}(\mathcal{A})=\frac{\partial G(\mathcal{A})}{\partial a_{ij}}=\frac{\partial G(\mathcal{A})}{\partial a_{ii}}\delta_{ij}, \quad G^{ij,kl}(\mathcal{A})=\frac{\partial^{2}G(\mathcal{A})}{\partial a_{ij}\partial a_{kl}},
\end{equation*}
then we have
\begin{equation}\label{e3.2}
G^{ij,kl}M_{ij}M_{kl}\leq 0,
\end{equation}
for any real symmetric tensor $M_{ij}$. A direct computation shows that there exists a  positive constant $\widetilde{c}$ such that
\begin{equation}\label{e3.3}
\sum _{i=1}^{n}\kappa_{i}\frac{\partial G(\kappa)}{\partial \kappa_{i}}=Ae^{-AF(\kappa)}\sum_{i}\frac{\kappa_{i}}{1+\kappa_{i}^{2}}\geq  -\widetilde{c},  \quad
 \mathrm{in}\quad \overline{\Gamma^{\sigma}}\setminus \{0\}.
\end{equation}
By the arguments  in \cite{CPW} one can conclude that there exists
  some constant $C_{2}$ such that
\begin{equation}\label{e3.4}
|\kappa|^{2}\sum_{i=1}^{n}\frac{\partial G(\kappa)}{\partial \kappa_{i}}\leq C_{2}\sum_{i=1}^{n}\frac{\partial G(\kappa)}{\partial \kappa_{i}}\kappa_{i}^{2},  \quad
 \mathrm{in}\quad \overline{\Gamma^{\sigma}}\setminus\{0\}.
\end{equation}
\par
After introducing the new operator $G$,  then (\ref{e1.3}) and (\ref{e1.4}) are equivalent to the following Dirichlet problem
 \begin{equation}\label{e3.5}
\left\{ \begin{aligned}
G(\mathcal{A})&=-e^{-Ah(x)}:= \psi(x),\ \ && \quad
 \mathrm{in}\quad \Omega, \\
u&=\varphi,  && \quad
 \mathrm{on}\quad\partial\Omega.
\end{aligned} \right.
\end{equation}
It can be followed from Lemma \ref{t2.3} that
 \begin{equation}\label{e3.6}
     \frac{\partial G(\kappa)}{\partial \kappa_{n}}=Ae^{-AF(\kappa)}\frac{1}{1+\kappa_{n}^{2}}\geq C_{3}>0.
   \end{equation}
where $C_{3}$ is a positive constant depending only on $\delta$.\par

For the convenient of reader, we denote
 $$g_{\alpha}=\frac{\partial G(\kappa)}{\partial \kappa_{\alpha}}(1\leq\alpha\leq n-1),\,\,g_{i}=\frac{\partial G(\kappa)}{\partial \kappa_{i}}(1\leq i\leq n),\,\,g_{n}=\frac{\partial G(\kappa)}{\partial \kappa_{n}}.$$

After the corresponding transformation, inspired by  literatures in \cite{CNS3} and \cite{CNS4}, then we derive that

\begin{lemma}\label{t3.4}
Suppose that $\Omega$, $\varphi$ and $h$ satisfy (A). If there exists a $C^{2}$  admissible subsolution $\underline{u}$ satisfying (\ref{e2.6}), $u\in C^{3}(\overline{\Omega})$ satisfy (\ref{e1.3}) and (\ref{e1.4}), then we have the following estimate
\begin{equation}\label{e3.7}
\sup_{\overline{\Omega}}|Du|\leq C_{4},
\end{equation}
where $C_{4}$ is a  positive constant that depends on $\overline{\Omega}$, $\| \underline{u}\|_{C^{2}(\overline{\Omega})}$, $\|h\|_{C^{2}(\overline{\Omega})}$, $\delta$.
\end{lemma}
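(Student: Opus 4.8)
The plan is to combine a bound for $|Du|$ on $\partial\Omega$ with a maximum-principle argument reducing the interior bound to the boundary one, run on the transformed equation $G(\mathcal{A})=\psi$ of (\ref{e3.5}). For the boundary bound, Lemma \ref{t3.1} gives $\underline{u}\le u\le\overline{u}$ in $\bar\Omega$ with equality on $\partial\Omega$, where $\underline{u}\in C^{4}(\bar\Omega)$ and $\overline{u}$, the harmonic extension of the $C^{4}$ datum $\varphi$ on the $C^{4}$ domain $\Omega$, is $C^{1}$ up to $\partial\Omega$. Fixing $\hat x\in\partial\Omega$ with inner unit normal $\nu$, the functions $u-\underline{u}\ge 0$ and $\overline{u}-u\ge 0$ vanish at $\hat x$, so their one-sided $\nu$-derivatives there are nonnegative, whence $\partial_{\nu}\underline{u}(\hat x)\le\partial_{\nu}u(\hat x)\le\partial_{\nu}\overline{u}(\hat x)$; together with $u=\varphi$ on $\partial\Omega$, which bounds the tangential derivatives by $\|\varphi\|_{C^{1}(\partial\Omega)}$, this yields $\sup_{\partial\Omega}|Du|\le C$ depending only on $\overline\Omega$ and $\|\underline{u}\|_{C^{2}(\overline\Omega)}$ (recall $\varphi=\underline{u}|_{\partial\Omega}$).

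For the global bound, set $w=\sqrt{1+|Du|^{2}}$ and consider $\Phi=\log w+\phi(u)$ on $\bar\Omega$, with $\phi$ a convex function (say $\phi(t)=e^{-\gamma t}$) and $\gamma$ a large constant to be chosen; this is admissible since $|u|$ is bounded by Lemma \ref{t3.1}. If $\Phi$ attains its maximum on $\partial\Omega$, the boundary bound finishes, so let $x_{0}$ be an interior maximum point (if $Du(x_{0})=0$ the bound is immediate from $\log w\le\Phi(x_{0})-\phi(u)$ and the $C^{0}$ bound, so assume $Du(x_{0})\ne 0$). Rotating the $x$-coordinates so that $Du(x_{0})=|Du(x_{0})|e_{n}$ leaves $\sum_{i,j}G^{ij}\Phi_{ij}$ unchanged and makes $[g^{ij}],[b^{ij}]$ diagonal at $x_{0}$; the first-order condition $D_{i}\Phi(x_{0})=0$ forces $u_{\alpha n}(x_{0})=0$ for $\alpha<n$, so a further rotation of the first $n-1$ variables diagonalizes $[u_{ij}]$, hence (via (\ref{e2.3})) $[a_{ij}]$ and $[G^{ij}]$, at $x_{0}$. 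From $\sum_{i}G^{ii}\Phi_{ii}\le 0$, expanding $D_{ii}\log w$ and using $D_{i}\Phi=0$ to eliminate $Dw$ leaves, besides first- and second-order terms, the third-order term $w^{-1}\sum_{i,k}G^{ii}u_{k}u_{kii}$; we convert this into controlled quantities by differentiating $G(\mathcal{A})=\psi$ in $x_{k}$ (and in the direction $u_{k}\partial_{k}$), contracting, and substituting with (\ref{e2.3})--(\ref{e2.4}), so that $\|h\|_{C^{2}}$ enters through $\psi=-e^{-Ah(x)}$ and the concavity (\ref{e3.2}) of $G$ (Lemma \ref{t3.3}) disposes of the resulting second-order-in-$G$ terms with the correct sign. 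The remaining terms are estimated via the structural inequalities (\ref{e3.3}), (\ref{e3.4}) and the lower bound (\ref{e3.6}) for $g_{n}$; choosing $\gamma$ so large that the positive term $\phi''(u)\sum_{i}G^{ii}u_{i}^{2}$ (bounded below using $g_{n}\ge C_{3}>0$ and $Du(x_{0})=|Du(x_{0})|e_{n}$) dominates the error forces $w(x_{0})\le C$, hence $\sup_{\overline\Omega}|Du|\le C_{4}$.

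The main difficulty is this global step, and within it the bookkeeping of the third-order derivatives: by (\ref{e2.4}) the curvature matrix $[a_{ij}]$ is a gradient-dependent nonlinear function of $[u_{ij}]$, so a single differentiation of the equation does not cleanly isolate $\sum G^{ii}u_{k}u_{kii}$ --- extra third-order contributions survive, weighted by powers of $w$ and by components of $Du$ --- and these must be organized carefully, following \cite{CPW}, with (\ref{e3.4}) used precisely to absorb the $w^{2}$-type growth before $\gamma$ can be fixed and the argument closed.
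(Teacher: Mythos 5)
Your boundary step is sound and is in fact the paper's own argument (squeeze $u$ between $\underline{u}$ and the harmonic extension $\overline{u}$, use equality on $\partial\Omega$ to control the normal derivative, tangential derivatives from $\varphi$). The genuine gap is in the interior estimate, which is where the whole difficulty of the lemma lies and which you only sketch. Two concrete problems. First, the mechanism you invoke is not available: in a gradient estimate one differentiates $G(\mathcal{A})=\psi$ exactly once (differentiating twice would produce fourth derivatives of $u$, which a $C^{3}$ solution does not control), and a single differentiation produces no Hessian of $G$, so the concavity (\ref{e3.2}) of Lemma \ref{t3.3} cannot ``dispose of the resulting second-order-in-$G$ terms''; likewise (\ref{e3.4}) plays no role here (in the paper it enters only in the $C^{2}$ estimate of Section 4). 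Second, the closing step ``choose $\gamma$ large so that $\phi''\sum G^{ii}u_i^2$ dominates'' does not close as stated. The relevant linearization is $\widetilde{G}_{ij}=\frac1w b^{ik}G^{kl}b^{lj}$, and at your maximum point (where $Du=u_ne_n$ and $b^{nn}=1/w$) the good term is $\phi''\widetilde{G}_{nn}u_n^{2}\approx\gamma^{2}e^{-\gamma u}g_nu_n^{2}/w^{3}\sim\gamma^{2}e^{-\gamma u}g_n/w$, which tends to $0$ as $|Du|\to\infty$; meanwhile error terms that survive the first-order conditions and the once-differentiated equation, such as $\phi'\widetilde{G}_{ij}u_{ij}=\phi'\sum_i g_i\kappa_i$ and $\phi'\widetilde{G}_iu_i$, are of size $O(\gamma)$ uniformly in $w$ (recall $\sum_i g_i\kappa_i$ is bounded and (\ref{e3.3})). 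The resulting inequality, good term $\le$ bad terms, reads $\gamma^{2}e^{-\gamma u}g_n/w\le C\gamma+C$, which is a lower bound on $w$, not an upper bound; no choice of $\gamma$ turns it into (\ref{e3.7}). So the decisive cancellation is neither exhibited nor plausible in the form you describe.

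For comparison, the paper avoids this degeneration by design: it takes $z=|Du|e^{\widetilde{A}u}$ with the linear exponent and the explicit choice $\widetilde{A}=\big[\frac{2}{C_3}\max|D\psi|\big]^{1/2}$, so there is no $\phi''$-term at all. At the interior maximum the first-order conditions give $u_{n\alpha}=0$ and $u_{nn}=-\widetilde{A}u_n^{2}$, hence $[a_{ij}]$ is diagonal with $a_{nn}=u_{nn}/w^{3}$; the equation is differentiated only in the gradient direction $x_n$, the second-order conditions (\ref{e3.9}) are substituted, and the tangential contribution is simply discarded using $\sum_{\alpha<n}g_\alpha\kappa_\alpha>0$, which follows from Lemma \ref{t2.3}(i) (positivity of $\kappa_1,\dots,\kappa_{n-1}$ under the phase hypothesis) — a structural fact your sketch never uses. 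What survives is the inequality $\frac{g_n\widetilde{A}^{2}u_n^{3}(u_n^{2}-2)}{w^{5}}\le\max|D\psi|$, whose left-hand side tends to $g_n\widetilde{A}^{2}\ge C_3\widetilde{A}^{2}=2\max|D\psi|$ as $u_n\to\infty$ by (\ref{e3.6}); this forces the bound on $u_n$. If you wish to keep a Korevaar-type function $\log w+\phi(u)$, you must either identify cancellations making every $O(\gamma)$ error term carry a favorable sign or decay in $w$, or redesign the test function so the good term does not degenerate as $|Du|\to\infty$; as written, the core of the lemma is missing.
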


\begin{proof}
Our first goal is to  estimate  $|Du|$ in $\Omega$. Let
\begin{equation*}
z=|Du|e^{\widetilde{A}u},
\end{equation*}
where $\widetilde{A}=\big[\frac{2}{C_{3}}\max|D\psi(x)|\big]^{1/2}$, $C_{3}$ is the  positive constant  from (\ref{e3.6}). We assume that it achieves its maximum at a point $x_{0}$ in $\Omega$. At this point we may suppose $|D u|=u_{n}>0$
and
$u_{\alpha}(x_{0})$=0 for $1\leq\alpha\leq n-1$, then $\log u_{n}+\widetilde{A}u$ reaches its maximum. It is obvious that
\begin{equation}\label{e3.8}
\frac{u_{ni}}{u_{n}}+\widetilde{A}u_{i}=0.
\end{equation}
Thus for $1\leq\alpha\leq n-1$ we have
\begin{equation}\label{e3.8.1}
u_{n\alpha}(x_{0})=-\widetilde{A}u_{\alpha}(x_{0})u_{n}(x_{0})=0, \quad u_{nn}(x_{0})=-\widetilde{A}u_{n}^{2}(x_{0}).
\end{equation}
By rotation, we may assume that $u_{ij}(x_{0})$ is diagonal. For any $i$ we also get
\begin{equation}\label{e3.9}
\Big(\frac{u_{nii}}{u_{n}}-\frac{u_{ni}^{2}}{u_{n}^{2}}-\widetilde{A}u_{ii}\Big)(x_{0})\leq0.
\end{equation}
From (\ref{t2.3}) we can find  $a_{ij}(x_{0})$ is also diagonal and
\begin{equation}\label{e3.10}
\begin{aligned}
a_{nn}&=\frac{u_{nn}}{w}\Big(1-\frac{2u_{n}^{2}}{w(1+w)}+\frac{u_{n}^{4}}{w^{2}(1+w)^{2}}\Big),\\
a_{\alpha \alpha}&=\frac{u_{\alpha \alpha}}{w},\quad 1\leq \alpha\leq n-1.
\end{aligned}
\end{equation}
Differentiating both sides of (\ref{e3.5}) with respect to $x_{n}$, one can deduced that
\begin{equation}\label{e3.11}
\psi_{n}=G^{ij}(\mathcal{A})\frac{\partial a_{ij}}{\partial x_{n}}.
\end{equation}
  It follows from (\ref{e3.10})  that
\begin{equation*}
\begin{aligned}
a_{\alpha \alpha,n}&=(\frac{1}{w})_{n}u_{\alpha \alpha}+\frac{1}{w}u_{\alpha \alpha n}=-\frac{u_{n}u_{nn}}{w^{2}}a_{\alpha \alpha}+\frac{u_{\alpha \alpha n}}{w},\\
a_{nn,n}&=(\frac{u_{nn}}{w})_{n}\Big(1-\frac{2u_{n}^{2}}{w(1+w)}+\frac{u_{n}^{4}}{w^{2}(1+w)^{2}}\Big)+\frac{u_{nn}}{w}\Big(1-\frac{2u_{n}^{2}}{w(1+w)}+\frac{u_{n}^{4}}{w^{2}(1+w)^{2}}\Big)_{n}\\
&=\frac{u_{nnn}}{w^{3}}-\frac{3u_{n}u_{nn}^{2}}{w^{5}}.
\end{aligned}
\end{equation*}
Thus we obtain
\begin{equation*}
\begin{aligned}
\psi_{n}&=\frac{1}{w^{3}}g_{n}u_{nnn}-\frac{3g_{n}u_{n}u_{nn}^{2}}{w^{5}}-\frac{u_{n}u_{nn}}{w^{2}}\sum_{1\leq\alpha\leq n-1}g_{\alpha}a_{\alpha\alpha}+\frac{1}{w}\sum_{1\leq\alpha\leq n-1}g_{\alpha}u_{n\alpha\alpha}\\
&=\frac{1}{w^{3}}g_{n}u_{nnn}-\frac{3g_{n}u_{n}u_{nn}^{2}}{w^{5}}+\frac{u_{n}u_{nn}}{w^{2}}g_{n}a_{nn}-\frac{u_{n}u_{nn}}{w^{2}}g_{n}a_{nn}\\
~~~&~~~~-\frac{u_{n}u_{nn}}{w^{2}}\sum_{1\leq\alpha\leq n-1}g_{\alpha}a_{\alpha\alpha}+\frac{1}{w}\sum_{1\leq\alpha\leq n-1}g_{\alpha}u_{n\alpha\alpha}\\
&=\frac{1}{w^{3}}g_{n}u_{nnn}-\frac{2g_{n}u_{n}u_{nn}^{2}}{w^{5}}-\frac{u_{n}u_{nn}}{w^{2}}g_{i}a_{ii}+\frac{1}{w}\sum_{1\leq\alpha\leq n-1}g_{\alpha}u_{n\alpha\alpha}.
\end{aligned}
\end{equation*}
As a result, we have
\begin{equation}\label{e3.12}
\frac{2g_{n}u_{n}u_{nn}^{2}}{w^{5}}+\frac{u_{n}u_{nn}}{w^{2}}g_{i}a_{ii}=-\psi_{n}+\frac{1}{w^{3}}g_{n}u_{nnn}+\frac{1}{w}\sum_{1\leq\alpha\leq n-1}g_{\alpha}u_{n\alpha\alpha}.
\end{equation}
By substituting (\ref{e3.9}) into (\ref{e3.12}), after some computation, we obtain
\begin{equation}\label{e3.13}
\frac{2g_{n}u_{n}u_{nn}^{2}}{w^{5}}+\frac{u_{n}u_{nn}}{w^{2}}g_{i}a_{ii}-\frac{g_{n}u_{nn}^{2}}{w^{3}u_{n}}+\widetilde{A}u_{n}g_{i}a_{ii}\leq-\psi_{n}.
\end{equation}
Arranging (\ref{e3.13}), we get
\begin{equation*}
\frac{g_{n}u_{nn}^{2}}{u_{n}w^{5}}(w^{2}-2)+\frac{u_{n}u_{nn}}{w^{2}}g_{i}a_{ii}+\widetilde{A}u_{n}g_{i}a_{ii}\leq-\psi_{n}.
\end{equation*}
Combining with (\ref{e3.8}),  we can get that
\begin{equation}\label{e3.14}
\frac{g_{n}\widetilde{A}^{2}u_{n}^{3}}{w^{5}}(w^{2}-2)+\frac{\widetilde{A}}{w^{2}}u_{n}g_{i}a_{ii}\leq-\psi_{n}.
\end{equation}
It's obvious that $w=\sqrt{1+u_{n}^{2}}$ at $x_{0}$.
 By Lemma \ref{t2.3}, here we noted that there exists a positive  constant $\tau>0$ such that $$\sum_{1\leq \alpha\leq n-1}g_{\alpha}a_{\alpha\alpha}=\sum_{1\leq \alpha\leq n-1}g_{\alpha}\kappa_{\alpha}=Ae^{-F(\kappa)}\sum_{1\leq \alpha\leq n-1}\frac{\kappa_{\alpha}}{1+\kappa_{\alpha}^{2}}\geq\tau>0,$$ therefore
\begin{equation*}
\frac{g_{n}\widetilde{A}^{2}u_{n}^{3}}{w^{5}}(u_{n}^{2}-1)+\frac{\widetilde{A}}{w^{2}}u_{n}g_{n}a_{nn} \leq\max|D\psi|.
\end{equation*}
From (\ref{e3.8.1}) and (\ref{e3.10}) we obtain
\begin{equation*}
\frac{g_{n}\widetilde{A}^{2}u_{n}^{3}}{w^{5}}(u_{n}^{2}-1)-\frac{g_{n}\widetilde{A}^{2}u_{n}^{3}}{w^{5}}=\frac{g_{n}\widetilde{A}^{2}u_{n}^{3}}{w^{5}}(u_{n}^{2}-2) \leq\max|D\psi|.
\end{equation*}
Using (\ref{e3.6}) we derive that
\begin{equation*}
\frac{u_{n}^{3}(u_{n}^{2}-2)}{w^{5}}=\frac{u_{n}^{3}(u_{n}^{2}-2)}{(1+u_{n}^{2})^{5/2}}\leq\frac{\max|D\psi|}{C_{3}\widetilde{A}^{2}}=\frac{1}{2}.
\end{equation*}
Hence we  conclude that $\sup_{\Omega}|Du|\leq C_{4}$.\par
Finally, we assume that $x_{0}\in\partial\Omega$ and $\nu$ is the
inner unit normal to $\partial\Omega$ at $x_{0}$ such that
$$D_{\nu}u(x_{0})=\sup_{\overline{\Omega}}|Du|.$$
We choose $\widetilde{p}>0$  such that line segment $\{x_{0}+\widetilde{p}\nu:0\leq \widetilde{p}\leq \widetilde{p}_{0}\}$ is contained in $\Omega$. Since $\underline{u}= u= \overline{u}$  on $\partial\Omega$, it follows from Lemma \ref{t3.1} that we obtain
\begin{equation*}
\frac{\underline{u}(x_{0}+\widetilde{p}\nu)-\underline{u}(x_{0})}{\widetilde{p}}\leq\frac{u(x_{0}+\widetilde{p}\nu)-u(x_{0})}{\widetilde{p}}\leq\frac{\overline{u}(x_{0}+\widetilde{p}\nu)-\overline{u}(x_{0})}{\widetilde{p}}.
\end{equation*}
Therefore $D_{\nu}\underline{u}(x_{0})\leq D_{\nu}u(x_{0})\leq D_{\nu}\overline{u}(x_{0})$.   Thus this completes the proof of Lemma \ref{t3.4}.
\end{proof}
From (\ref{e3.5}),
 we define the equation
\begin{equation}\label{e5.3}
\widetilde{G}(D^{2}u, Du)\triangleq G(\mathcal{A})=\psi(x).
\end{equation}
 We will use the notation
\begin{equation*}
\widetilde{G}_{ij}=\frac{\partial \widetilde{G}}{\partial r_{ij}},\quad \widetilde{G}_{i}=\frac{\partial \widetilde{G}}{\partial p_{i}},
\end{equation*}
where   $r$   represents for the second derivative  and $p$  represents for gradient variables.
By the property of the operator $G$ and the boundedness  of  $|Du|$, the operator $\widetilde{G}(\cdot,p)$ satisfies the structure conditions as same as Lemma 2.14 in \cite{GB}. Then another version of Lemma \ref{t2.5} in the following holds.
\begin{Corollary}\label{c3.5}
 Assume that $\Omega$, $\varphi$ and $h$ satisfy $(A)$, $\underline{u}$ is an admissible subsolution satisfying (\ref{e2.6}), $u\in C^{3,\alpha}(\bar{\Omega})$ is an admissible solution to (\ref{e1.3}) and (\ref{e1.4}). As before, let $\lambda(D^2 u)=(\lambda_1,\cdots, \lambda_n)$ are the eigenvalues of Hessian matrix $D^2 u$ according to $x$. Then there exsits $R_{0}$ depending only on $\underline{u}$ and $\delta$, such that for any $|\lambda|\geq R_{0}$, we have
\begin{equation}\label{e2.11}
\widetilde{G}_{ij}[\underline{u}_{ij}-u_{ij}]\geq\tau>0,
\end{equation}
where $\tau$ is a constant depending on only $\underline{u}$ and $\delta$.
\end{Corollary}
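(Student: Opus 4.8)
\emph{Proof proposal.} Corollary \ref{c3.5} is the transcription of Lemma \ref{t2.5} into the $(D^2u,Du)$--variables, so the plan is to pass from the $\mathcal A$--picture to the Hessian picture by the chain rule and then invoke the concavity/structure theory already used for Lemma \ref{t2.5}. First I would record that, with the gradient frozen, $a_{kl}$ is \emph{linear} in the Hessian: from $(\ref{e2.3})$ one has $a_{kl}=\tfrac1w\,b^{ki}(Du)\,u_{ij}\,b^{jl}(Du)$, so by the chain rule
\begin{equation*}
\widetilde G_{ij}=G^{kl}(\mathcal A)\frac{\partial a_{kl}}{\partial r_{ij}}=\frac1w\,b^{ik}(Du)\,G^{kl}(\mathcal A)\,b^{lj}(Du),
\end{equation*}
and, using linearity once more,
\begin{equation*}
\widetilde G_{ij}\big[\underline u_{ij}-u_{ij}\big]=G^{kl}(\mathcal A)\big[\hat a_{kl}-a_{kl}\big],\qquad \hat a_{kl}:=\tfrac1w\,b^{ki}(Du)\,\underline u_{ij}\,b^{jl}(Du).
\end{equation*}
Thus the comparison matrix that actually appears is the hybrid $\hat{\mathcal A}=[\hat a_{kl}]$ --- built from the Hessian of $\underline u$ but the gradient of $u$ --- rather than $\underline{\mathcal A}=[\underline a_{ij}]$. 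Recall also $G^{kl}(\mathcal A)=Ae^{-AF(\mathcal A)}F^{kl}(\mathcal A)$, with $e^{-AF(\mathcal A)}$ pinched between two positive constants on $\Gamma$.

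Next I would feed in Lemma \ref{t3.4}: since $|Du|\le C_4$ on $\bar\Omega$, the weight $w$ lies in a fixed compact subinterval of $[1,\infty)$, and the positive matrix $[b^{ij}]$, with eigenvalues $1$ and $1/w$, together with its inverse, is bounded between positive constants. Hence (i) the eigenvalues $\kappa=\lambda(\mathcal A)$ of $\mathcal A=\tfrac1w[b^{ik}]D^2u\,[b^{lj}]$ are comparable to $\lambda(D^2u)$, so the hypothesis $|\lambda(D^2u)|\ge R_0$ becomes $|\kappa|\ge R_0'$ with $R_0'$ depending only on $\underline u,\delta$; and (ii) $\hat{\mathcal A}$ ranges over a fixed compact set of symmetric matrices determined by $\|\underline u\|_{C^2(\bar\Omega)},C_4,\delta$. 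Moreover, for $|\kappa|\ge R_0'$ the smallest curvature $\kappa_n$ stays bounded: if $|\kappa_n|$ were large it would be large \emph{positive} (Lemma \ref{t2.3}(iii)), forcing \emph{every} $\kappa_i$ large positive and hence $h(x)=\sum_i\arctan\kappa_i$ arbitrarily close to $n\frac\pi2$, which is impossible once $R_0$ exceeds a constant determined by $\sup_{\bar\Omega}F(\underline{\mathcal A})<n\frac\pi2$. Consequently $\sum_pG^{pp}(\mathcal A)\ge G^{nn}(\mathcal A)\ge c_0>0$.

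With these reductions, the conclusion should follow as Lemma \ref{t2.5} was obtained from Lemma \ref{t2.4} in \cite{CPW}. The operator $\widetilde G(\cdot,Du)$ is concave and elliptic in the Hessian slot by Lemma \ref{t3.3} composed with the bounded linear change of variables above, and the dichotomy of Lemma \ref{t2.4} should apply for $|\kappa|\ge R_0'$ with $\hat{\mathcal A}$ in place of $\underline{\mathcal A}$: either $G^{kl}(\mathcal A)[\hat a_{kl}-a_{kl}]>C_1\sum_pG^{pp}(\mathcal A)$, or $G^{ii}(\mathcal A)>C_1\sum_pG^{pp}(\mathcal A)$ for every $i$. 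Taking $i$ to be the index of the largest curvature, $G^{ii}(\mathcal A)/\sum_pG^{pp}(\mathcal A)\le(1+\kappa_n^2)/(1+\kappa_i^2)\to0$ as $\kappa_i\to\infty$ while $\kappa_n$ stays bounded, so the second alternative fails for $|\kappa|$ large and the first must hold; together with $\sum_pG^{pp}(\mathcal A)\ge c_0$ this yields
\begin{equation*}
\widetilde G_{ij}\big[\underline u_{ij}-u_{ij}\big]=G^{kl}(\mathcal A)\big[\hat a_{kl}-a_{kl}\big]>C_1c_0=:\tau>0
\end{equation*}
whenever $|\lambda(D^2u)|\ge R_0$, with $\tau$ depending only on $\underline u,\delta$. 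Equivalently, one simply quotes Lemma 2.14 of \cite{GB}, whose structure hypotheses have been checked for $\widetilde G(\cdot,p)$.

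The point I expect to need genuine care is the transfer of the asymptotic cone condition $(\ref{e2.8})$ --- which the subsolution hypothesis $(\ref{e2.6})$ guarantees only for $\underline{\mathcal A}=a(D^2\underline u,D\underline u)$ --- to the hybrid $\hat{\mathcal A}=a(D^2\underline u,Du)$, uniformly in $x\in\bar\Omega$, since $\hat{\mathcal A}$ and $\underline{\mathcal A}$ differ by an amount that is bounded but not small (they are congruent via a uniformly bounded, uniformly invertible matrix depending on $Du$ and $D\underline u$). The way I would handle this is to use point (ii) above, namely that $|Du|\le C_4$ confines $\hat{\mathcal A}$ to a fixed compact set, together with $\sup_{\bar\Omega}F(\underline{\mathcal A})<n\frac\pi2$; this is precisely the quantitative input needed to run the \cite{CPW}/\cite{GB} argument for the frozen-gradient operator $\widetilde G(\cdot,Du)$, after which the remainder is that machinery applied verbatim.
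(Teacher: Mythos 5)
Your route is essentially the paper's own: the paper justifies Corollary \ref{c3.5} only by the sentence preceding it, namely that since $|Du|$ is bounded (Lemma \ref{t3.4}) the frozen-gradient operator $\widetilde G(\cdot,p)$ satisfies the structure conditions of Lemma 2.14 in \cite{GB}, so the corollary is ``another version'' of Lemma \ref{t2.5}. Your chain-rule identity $\widetilde G_{ij}=\frac1w b^{ik}G^{kl}b^{lj}$ is exactly the paper's (\ref{e5.4}), and your rewriting of $\widetilde G_{ij}[\underline u_{ij}-u_{ij}]$ as $G^{kl}(\mathcal A)[\hat a_{kl}-a_{kl}]$ with the hybrid matrix $\hat{\mathcal A}=\frac1w b(Du)D^2\underline u\,b(Du)$, the boundedness of $\kappa_n$, and the elimination of the second alternative of Lemma \ref{t2.4} for large $|\kappa|$ are all correct and make explicit what the citation hides.

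The genuine gap is the step you flag and then wave through: to run the dichotomy of Lemma \ref{t2.4} with $\hat{\mathcal A}$ in place of $\underline{\mathcal A}$ you must verify hypothesis (\ref{e2.8}) for $\hat{\mathcal A}$, and neither the compactness of its range nor $\sup_{\bar\Omega}F(\underline{\mathcal A})<n\frac{\pi}{2}$ gives this. Replacing $D\underline u$ by $Du$ in the gradient slot rescales each eigenvalue only by a factor pinched between positive constants, and when $h$ is near $n\frac{\pi}{2}$ the required inequality $\sum_{j\neq i}\arctan\lambda_j(\hat{\mathcal A})+\frac{\pi}{2}>h(x)$ can fail although it holds for $\underline{\mathcal A}$: with $n=2$, $D\underline u(x_0)=0$, $D^2\underline u(x_0)=\cot(\epsilon)I$, $h(x_0)=\pi-2\epsilon$ (a borderline subsolution), the eigenvalues of $\hat{\mathcal A}$ are $\cot(\epsilon)/w$ and $\cot(\epsilon)/w^3$, and $\arctan\bigl(\cot(\epsilon)/w^3\bigr)+\frac{\pi}{2}>\pi-2\epsilon$ forces $w^3<\tan(2\epsilon)/\tan(\epsilon)\to 2$, which is violated as soon as $|Du(x_0)|\geq 1$. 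So the data you allow yourself ($|Du|\le C_4$ and (\ref{e2.6})) do not imply the cone condition at the hybrid matrix, and the ``machinery applied verbatim'' step is unproven; to be fair, the paper does not close this either, since \cite{GB} treats operators $f(\lambda(\nabla^2u+\chi))$ with no gradient dependence in the matrix, which is exactly where the hybrid issue is absent. Note that in the paper the corollary is only invoked in Corollary \ref{t5.2}, after rotating so that $Du(x_0)=D\underline u(x_0)$ and on a ball where $|D(u-\underline u)|\le\varepsilon$ by (\ref{e5.5ab}); there $\|\hat{\mathcal A}-\underline{\mathcal A}\|\le C\varepsilon$, and the claim follows directly from Lemma \ref{t2.5} together with $G^{kl}=Ae^{-AF}F^{kl}$ and the bound on $\sum_kG^{kk}$, absorbing the $C\varepsilon$ error into $\tau$. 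A proof of the corollary as literally stated would need a genuine argument for (\ref{e2.8}) at $\hat{\mathcal A}$, or a restriction to that near-boundary regime.
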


\section{Estimates for Second Derivatives from  Bounds on the Boundary}
In this section we will show that how to estimate the second derivatives of $u$ in $\Omega$ if we know bounds for them on $\partial\Omega$. Let us assume that
\begin{lemma}\label{t4.1}
Suppose that $\Omega$, $\varphi$ and $h$ satisfy (A). If there exists a $C^{4}$ admissible subsolution $\underline{u}$ of (\ref{e2.6}), $u\in C^{3}(\overline{\Omega})$ satisfy (\ref{e1.3}) and (\ref{e1.4}), then we have the following estimate
\begin{equation}\label{e4.1}
\sup_{\partial\Omega}|D^{2}u|\leq C_{5}.
\end{equation}
where $C_{5}$ is a constant depending on $\partial\Omega$, $\underline{u}$, $h$, $\delta$ and $n$.
\end{lemma}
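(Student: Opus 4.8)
The plan is to prove the estimate at a fixed but arbitrary $x_{0}\in\partial\Omega$ with constants uniform in $x_{0}$; letting $x_{0}$ vary then yields $(\ref{e4.1})$. Rotate coordinates so that $e_{n}$ is the inner unit normal of $\partial\Omega$ at $x_{0}$, write $\partial\Omega$ locally as $x_{n}=\rho(x')$ with $\rho(0)=0$, $D\rho(0)=0$ and $\rho$ bounded in $C^{4}$ by the data, and split $D^{2}u(x_{0})$ into the tangential block $[u_{\alpha\beta}(x_{0})]_{1\le\alpha,\beta\le n-1}$, the mixed entries $u_{\alpha n}(x_{0})$, and the double-normal entry $u_{nn}(x_{0})$. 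The tangential block is the easy case: differentiating the boundary identity $u(x',\rho(x'))=\varphi(x',\rho(x'))$ twice in $x'$ and evaluating at $x_{0}$ expresses $u_{\alpha\beta}(x_{0})$ through the second derivatives of $\varphi$, the derivatives of $\rho$ (the second fundamental form of $\partial\Omega$) and the first derivatives of $u$, so the gradient bound of Lemma~\ref{t3.4} gives $|u_{\alpha\beta}(x_{0})|\le C$ with $C$ depending only on $\partial\Omega$, $\varphi$, $\underline{u}$, $h$, $\delta$ and $n$.

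For the mixed entries I use a boundary barrier in the manner of the subsolution method of B.~Guan~\cite{B}. Let $\mathcal L v:=\widetilde{G}_{ij}v_{ij}+\widetilde{G}_{i}v_{i}$ be the linearization of the transformed equation $(\ref{e3.5})$; differentiating $(\ref{e3.5})$ once gives $|\mathcal L(D_{k}u)|\le C$, and for a smooth vector field $T$ tangent to $\partial\Omega$ near $x_{0}$ with $T(x_{0})=e_{\alpha}$, the function $v:=D_{T}(u-\underline{u})$ satisfies $|\mathcal L v|\le C$ in $\Omega_{r}:=B_{r}(x_{0})\cap\Omega$, vanishes on $\partial\Omega\cap B_{r}(x_{0})$, and obeys $|v|\le C$ on $\Omega\cap\partial B_{r}(x_{0})$. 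I construct $\Phi$ on $\Omega_{r}$ out of $u-\underline{u}$ (nonnegative by Lemma~\ref{t3.1} and, by the Hopf estimate of Lemma~\ref{t3.1a}, comparable to the distance to $\partial\Omega$) and $|x-x_{0}|^{2}$, so that $\Phi\ge|v|$ on $\partial\Omega_{r}$, $\Phi(x_{0})=0$ and $\mathcal L\Phi\le-|\mathcal L v|$ in $\Omega_{r}$; this last inequality is where the subsolution dichotomy of Corollary~\ref{c3.5} enters, furnishing the definite negative term $\mathcal L(u-\underline{u})\le-\tau$ (modulo controlled lower-order terms) wherever $|\lambda(D^{2}u)|\ge R_{0}$, while wherever $|\lambda(D^{2}u)|\le R_{0}$ the operator $\mathcal L$ is uniformly elliptic with $\widetilde{G}_{nn}\ge C_{3}>0$ by $(\ref{e3.6})$. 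The maximum principle then gives $|v|\le\Phi$ in $\Omega_{r}$, and since $v(x_{0})=\Phi(x_{0})=0$, differentiating along the inner normal at $x_{0}$ gives $|D_{n}v(x_{0})|\le D_{n}\Phi(x_{0})\le C$; as $\underline{u}\in C^{4}$, this yields $|u_{\alpha n}(x_{0})|\le C$.

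The remaining and hardest step is an upper bound for $u_{nn}(x_{0})$; the lower bound $u_{nn}(x_{0})\ge-C$ is immediate from $\mathrm{tr}\,D^{2}u=\mathrm{tr}(w\mathcal{B}^{T}\mathcal{A}\mathcal{B})\ge0$ (noted before Lemma~\ref{t3.1}) together with the tangential bound. Arguing by contradiction: if $u_{nn}(x_{0})$ is large then, the other entries of $D^{2}u(x_{0})$ being bounded, the largest eigenvalue $\lambda_{1}$ of $D^{2}u(x_{0})$ is comparable to $u_{nn}(x_{0})$, and so is the largest principal curvature $\kappa_{1}$; as $u_{nn}(x_{0})\to\infty$ one has $\arctan\kappa_{1}\to\tfrac{\pi}{2}$ and the remaining $\kappa_{i}$ converge to the principal curvatures of a fixed admissible configuration determined by the already controlled tangential and mixed data and by $Du(x_{0})$, so the equation forces $h(x_{0})=\tfrac{\pi}{2}+\sum_{i\ge2}\arctan\kappa_{i}+o(1)$. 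Everything thus reduces to establishing a \emph{uniform} gap between $\lim_{t\to\infty}F\big(\kappa(D^{2}u(x_{0})+t\,e_{n}\otimes e_{n},Du(x_{0}))\big)$ and $h(x_{0})$, together with a uniform rate at which this limit is approached. This is precisely where the admissible subsolution is used: on $\partial\Omega$ the tangential data of $u$ and of $\underline{u}$ differ only through $D_{\nu}(u-\underline{u})(x_{0})$ times the second fundamental form of $\partial\Omega$, a quantity controlled by the $C^{4}$ bound on $\underline{u}$ and by the Hopf estimate of Lemma~\ref{t3.1a}, and combining this with the admissibility constraints of Lemma~\ref{t2.3} (notably $\kappa_{1}\ge\cdots\ge\kappa_{n-1}>0$, $\kappa_{n}\ge-\cot\delta$) and with the subsolution inequalities of Lemmas~\ref{t2.4}--\ref{t2.5} and Corollary~\ref{c3.5} pins the limiting tangential curvatures into a region with $\tfrac{\pi}{2}+\sum_{i\ge2}\arctan\kappa_{i}\ge h(x_{0})+\tfrac{1}{C}$. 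Granting this non-degeneracy at $x_{0}$---which I expect to be the main obstacle of the section, and which is carried out in the spirit of \cite{B} and \cite{CPW}---the strict monotonicity of $t\mapsto\widetilde{G}(D^{2}u(x_{0})+t\,e_{n}\otimes e_{n},Du(x_{0}))$ forces $u_{nn}(x_{0})\le C$. Combining the three steps establishes $(\ref{e4.1})$.
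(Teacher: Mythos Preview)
Your overall three-step decomposition and the barrier strategy for the mixed derivatives track Section~5 of the paper. Two small points there: the paper does not take a generic tangential field $T$ but the approximate-rotation field $T_{\alpha}=\partial_{\alpha}+\sum_{\beta<n}\rho_{\alpha\beta}(0)(x_{\beta}\partial_{n}-x_{n}\partial_{\beta})$, so that rotation-invariance of the curvature operator gives $L(T_{\alpha}u)=T_{\alpha}\psi$ exactly, with no second-order commutators $\widetilde G_{ij}T^{k}_{i}u_{kj}$ left to absorb; and the paper's barrier $v=t_{1}(u-\underline{u})+t_{2}d-d^{2}$ (Lemma~\ref{t5.1}) carries the extra piece $-d^{2}$, whose contribution $-2\widetilde G_{ij}D_{i}dD_{j}d$ is what supplies the definite negativity of $Lv$ on the set $\{|\lambda|\le R_{0}\}$. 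Your ingredients $u-\underline{u}$ and $|x-x_{0}|^{2}$ alone do not produce this term.

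The genuine gap is the one you yourself flag: the uniform lower bound for $\widehat G[u_{\alpha\beta}(x_{0})]-\psi(x_{0})$ (equivalently for $\lim_{t\to\infty}F-h$). Your proposed route---comparing the tangential blocks of $u$ and $\underline{u}$ via $u_{\alpha\beta}-\underline{u}_{\alpha\beta}=-(u-\underline{u})_{n}\rho_{\alpha\beta}$ and then invoking Lemmas~\ref{t2.3}--\ref{t2.5} and Corollary~\ref{c3.5}---does not close: those results control $F^{ij}[\underline a_{ij}-a_{ij}]$ or $\widetilde G_{ij}[\underline u_{ij}-u_{ij}]$, not the limiting value $\widehat G[u_{\alpha\beta}]$, and the displayed relation ties $u_{\alpha\beta}$ to $\underline u_{\alpha\beta}$ only through $(u-\underline u)_{n}$ and the boundary curvature $\rho_{\alpha\beta}$, neither of which has an a~priori sign. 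The paper's mechanism (Lemma~\ref{t5.6}, following Trudinger~\cite{TN}) is different: let $\Theta=\min_{\partial\Omega}(\widehat G[u_{\alpha\beta}]-\psi)$ be attained at some boundary point; concavity of $\widehat G$ there gives either $\Theta\ge\tfrac12\mathbf{c}$ with $\mathbf{c}=\min_{\partial\Omega}(\widehat G[\underline u_{\alpha\beta}]-\widetilde G[\underline u_{ij}])>0$, or else $\eta:=\widehat G_{0}^{\alpha\beta}\rho_{\alpha\beta}$ is bounded below; in the second case one builds a \emph{second} auxiliary function $\Phi=-(u-\underline u)_{n}+\eta^{-1}\widehat G_{0}^{\alpha\beta}(\underline u_{\alpha\beta}-u_{\alpha\beta}(0))-\eta^{-1}(\psi-\psi(0))$, checks $\Phi\ge0$ on $\partial\Omega$ (from concavity) and $|L\Phi|\le C$, and re-runs the barrier package $\widehat A v+\widehat B|x|^{2}+\Phi$ to bound $u_{nn}$ \emph{at that single point}; once $|D^{2}u|$ is bounded there, strict monotonicity of $\widetilde G$ in the $e_{n}\otimes e_{n}$ direction forces $\Theta>0$. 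This secondary barrier--maximum-principle loop is the missing idea.
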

 The proof of this statements will be deferred until the section 5. If (\ref{e4.1}) holds then we have
\begin{lemma}\label{t4.2}
Suppose that $\Omega$, $\varphi$ and $h$ satisfy (A). If there exists a $C^{4}$ admissible subsolution $\underline{u}$ satisfying (\ref{e2.6}), $u\in C^{3}(\overline{\Omega})$ satisfy (\ref{e1.3}) and (\ref{e1.4}), then we have the following estimate
\begin{equation}\label{e4.2}
\sup_{\overline{\Omega}} |D^{2}u|\leq C_{6},
\end{equation}
where $C_{6}$ is a constant depending on $\overline{\Omega}$, $\| \underline{u}\|_{C^{4}(\overline{\Omega})}$, $\|h\|_{C^{2}(\overline{\Omega})}$, $\delta$.
\end{lemma}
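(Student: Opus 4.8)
The plan is to treat this as the classical interior second‑derivative estimate reducing $\sup_{\overline{\Omega}}|D^2u|$ to $\sup_{\partial\Omega}|D^2u|$ for the \emph{concave} equation (\ref{e5.3}), so that the only external input is Lemma \ref{t4.1}. I would first reduce matters to an upper bound for $\lambda_{\max}(D^2u)=\max_{|\xi|=1}u_{\xi\xi}$. By (\ref{e2.3}) one has $D^2u=w\,\mathcal B^{T}\mathcal A\,\mathcal B$ with $w=\sqrt{1+|Du|^2}$, and since Lemma \ref{t3.4} bounds $|Du|$, both $\mathcal B$ and $\mathcal B^{-1}$ are bounded by constants depending only on the data; hence $\lambda_{\max}(D^2u)$ and $\kappa_1=\lambda_{\max}(\mathcal A)$ are comparable. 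By Lemma \ref{t2.3}(i),(iii) the principal curvatures obey $-\tfrac1{\tan\delta}\le\kappa_i\le\kappa_1$ for every $i$, so a bound $\kappa_1\le C$ gives $|\mathcal A|\le C$ and, via (\ref{e2.3}) again, $|D^2u|\le C$. Thus it suffices to bound $\lambda_{\max}(D^2u)$ from above on $\overline{\Omega}$.

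Next I would run a maximum principle on
\[
  W(x,\xi)=\log u_{\xi\xi}(x)+\tfrac{\beta}{2}|Du(x)|^2,\qquad |\xi|=1,
\]
with $\beta$ a large constant to be fixed later, maximized over $\overline{\Omega}\times S^{n-1}$; we may assume this maximum is large, otherwise $\lambda_{\max}(D^2u)$ is already controlled. If it is attained on $\partial\Omega$, then $\lambda_{\max}(D^2u)$ there is bounded by Lemma \ref{t4.1} and we are done. Otherwise it is attained at an interior point $x_0$; after a rotation we may assume $D^2u(x_0)$ is diagonal, so the optimal direction is $\xi=e_1$ and $u_{11}(x_0)=\lambda_{\max}(D^2u(x_0))$. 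Working with $x\mapsto W(x,e_1)$, which has an interior maximum at $x_0$, one has $D_iW=0$ there, i.e. $u_{11i}/u_{11}=-\beta u_\ell u_{\ell i}$, and $[D_{ij}W]\le0$, whence $\widetilde G_{ij}D_{ij}W\le0$. Differentiating $\widetilde G(D^2u,Du)=\psi$ once and twice in the $e_1$‑direction and inserting the result, the leading third‑order term $\widetilde G_{ij,kl}u_{ij1}u_{kl1}$ enters with the favourable sign: since $r\mapsto\mathcal A(r,p)$ is affine by (\ref{e2.4}), the concavity of $G$ from Lemma \ref{t3.3} and (\ref{e3.2}) passes to $\widetilde G(\cdot,p)$. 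The positive term created by the auxiliary $\tfrac{\beta}{2}|Du|^2$, namely $\beta\,\widetilde G_{ij}u_{\ell i}u_{\ell j}=\beta\,\widetilde G_{ij}\bigl((D^2u)^2\bigr)_{ij}$, is the one that must dominate; passing between the $u$‑ and $a$‑quantities via (\ref{e2.3})–(\ref{e2.4}) and the gradient bound, and invoking (\ref{e3.4}) together with Lemma \ref{t2.3}, it satisfies $\beta\,\widetilde G_{ij}u_{\ell i}u_{\ell j}\ge c\,\beta\,\lambda_{\max}(D^2u)^2\sum_k\widetilde G_{kk}$, which is genuine coercivity \emph{even though} $\sum_k\widetilde G_{kk}$ itself degenerates as $\kappa_1\to\infty$ (so no naive lower bound on $\sum_k\widetilde G_{kk}$ is available, which is exactly why (\ref{e3.4}) is needed).

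The remaining terms all involve the dependence of $\widetilde G$ on $Du$ — the first‑order factor $\widetilde G_i$, the mixed and pure second‑order derivatives of $\widetilde G$ in the gradient variables, and $\psi_1$, $\psi_{11}$ — together with third derivatives $u_{ij1}$ that are not a priori bounded. These I would absorb into $C\bigl(1+\lambda_{\max}(D^2u)\bigr)\sum_k\widetilde G_{kk}$, up to a piece handled by Cauchy–Schwarz against the concavity term, using the $C^1$ bound of Lemma \ref{t3.4} and the structure conditions for $\widetilde G(\cdot,p)$ of \cite{GB} recorded just before Corollary \ref{c3.5}. Collecting, at $x_0$ one obtains an inequality of the form $c\,\beta\,\lambda_{\max}(D^2u)^2\sum_k\widetilde G_{kk}\le C\bigl(1+\lambda_{\max}(D^2u)\bigr)\sum_k\widetilde G_{kk}$; after fixing $\beta$, this forces $\lambda_{\max}(D^2u)(x_0)\le C_6$, hence $\max_{\overline{\Omega}}\lambda_{\max}(D^2u)\le C_6$, and (\ref{e4.2}) follows from the first step.

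I expect the main obstacle to be precisely this gradient‑dependence of $\widetilde G$: because $\widetilde G$ sees $Du$ through the $b$'s and $w$, differentiating the equation twice produces the gradient‑derivatives $\widetilde G_i$ and their second‑order analogues, multiplied by third derivatives that are not controlled, and absorbing these into the coercive term without losing a power of $\lambda_{\max}(D^2u)$ is delicate — this is exactly why one must first establish the gradient bound (Lemma \ref{t3.4}) and verify the structure conditions of \cite{GB}. A secondary difficulty is the degeneracy of the weights $\widetilde G_{kk}$: there is no uniform ellipticity here, and the needed coercivity must be extracted from (\ref{e3.3})–(\ref{e3.4}) and the algebraic inequalities of Lemma \ref{t2.3}.
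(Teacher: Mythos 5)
Your route (flat coordinates, $W=\log u_{\xi\xi}+\tfrac{\beta}{2}|Du|^{2}$, concavity of $\widetilde G(\cdot,p)$) is genuinely different from the paper's, but its two decisive steps do not hold up. First, the coercivity claim $\beta\,\widetilde G_{ij}u_{\ell i}u_{\ell j}\ge c\,\beta\,\lambda_{\max}(D^{2}u)^{2}\sum_{k}\widetilde G_{kk}$ is false in exactly the regime you must rule out. By (\ref{e5.4}) and (\ref{e2.3}), $\widetilde G_{ij}u_{\ell i}u_{\ell j}=w\,\mathrm{tr}\bigl(\mathcal A\,G'\mathcal A\,\mathcal B^{-2}\bigr)\le w^{3}\sum_{i}g_{i}\kappa_{i}^{2}$ with $G'=[G^{kl}]$, $\mathcal B=[b^{ij}]$, and $\sum_{i}g_{i}\kappa_{i}^{2}=Ae^{-AF(\kappa)}\sum_{i}\kappa_{i}^{2}/(1+\kappa_{i}^{2})\le nAe^{-AF(\kappa)}$ is \emph{uniformly bounded} (this is recorded in (\ref{e5.6ac})); together with the gradient bound, your ``good'' term is bounded by a fixed constant and does not grow as $\lambda_{\max}(D^{2}u)\to\infty$. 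On the other hand, $\sum_{k}\widetilde G_{kk}\ge C_{13}\sum_{i}g_{i}\ge C_{13}\Lambda>0$ by (\ref{e5.6}) and Lemma \ref{t4.2.1} — your parenthetical assertion that no lower bound on $\sum_{k}\widetilde G_{kk}$ is available is the opposite of what the paper proves, since the smallest curvature is bounded by Lemma \ref{t2.3} and $\sup h<n\pi/2$ — so $\lambda_{\max}^{2}\sum_{k}\widetilde G_{kk}\to\infty$ while the left side stays bounded. (Note also that reading (\ref{e3.4}) as a pointwise statement strong enough to yield your inequality would, combined with Lemma \ref{t4.2.1} and (\ref{e5.6ac}), bound $|\kappa|$ outright with no PDE argument, so it cannot serve as the engine of the estimate.) For this arctan-type operator the weight $e^{\beta|Du|^{2}/2}$ simply does not generate a useful coercive term.

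Second, the absorption of the gradient dependence is asserted, not proved, and it is precisely the obstruction the paper's proof is designed to avoid. Differentiating $\widetilde G(D^{2}u,Du)=\psi$ twice in $e_{1}$ produces the mixed term $2\,\partial^{2}\widetilde G/\partial r_{ij}\partial p_{k}\,u_{ij1}u_{k1}$, which is linear in the uncontrolled third derivatives $u_{ij1}$ for all $(i,j)$ (the critical-point relation only fixes $u_{11k}$), while the concavity term $\widetilde G_{ij,kl}u_{ij1}u_{kl1}$ is merely nonpositive, with no quantitative modulus in arbitrary directions; there is nothing to Cauchy--Schwarz against. The paper instead follows the CNS curvature-equation scheme: it maximizes $\kappa_{\max}/(\widetilde\theta-a)$ with $\widetilde\theta=1/w$, passes to tangential graph coordinates with $\nabla\zeta(0)=0$ so that $a_{ij,k}=\zeta_{ijk}$ by (\ref{e4.13}) (no first-order contamination at the point), the only third-derivative combinations that appear are $g_{i}\zeta_{iik}=\widetilde\psi_{k}$ and $a_{j}g_{i}\zeta_{jii}$ from (\ref{e4.10}), both controlled by the once-differentiated equation, and the good term $-\zeta_{11}g_{i}\zeta_{ii}^{2}\,a/(\widetilde\theta-a)$ comes from the Hessian of $\widetilde\theta$, not from a $|Du|^{2}$ weight; the conclusion then uses (\ref{e3.3}), (\ref{e3.4}) at that special point and $\sum_{i}g_{i}\ge\Lambda$ from Lemma \ref{t4.2.1}. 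To make your argument work you would essentially have to reproduce this hypersurface structure; as written, the two central inequalities of your interior computation are unproved, and the first is quantitatively false.
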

 We present useful knowledge for the proof of Lemma \ref{t4.2}.
\begin{lemma}\label{t4.2.1}
There exists a positive constant $\Lambda$  only depending on $\delta$ such that
$$\sum_{i}g_{i}=Ae^{-AF(\kappa)}\sum_{i}\frac{1}{1+\kappa_{i}^{2}}\geq\Lambda>0.$$
\end{lemma}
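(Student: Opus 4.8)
The plan is to reduce the claim to the one–index lower bound (\ref{e3.6}), which is already available. First I would note that, since $A>0$ and $e^{-AF(\kappa)}>0$, every summand $g_i=\frac{\partial G(\kappa)}{\partial\kappa_i}=Ae^{-AF(\kappa)}\frac{1}{1+\kappa_i^{2}}$ is strictly positive, so $\sum_{i=1}^{n}g_i\ge g_n$. Since $g_n=Ae^{-AF(\kappa)}\frac{1}{1+\kappa_n^{2}}\ge C_3>0$ by (\ref{e3.6}), it follows that $\sum_{i=1}^{n}g_i\ge C_3$, and one may take $\Lambda:=C_3$.

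To keep the argument self-contained and to show where $\delta$ (together with the range of $h$) enters, I would also spell out the reasoning behind (\ref{e3.6}). Order the curvatures as $\kappa_1\ge\cdots\ge\kappa_n$ and put $\beta_i:=\pi/2-\arctan\kappa_i\in(0,\pi)$; then $\sin^{2}\beta_i=\cos^{2}(\arctan\kappa_i)=\frac{1}{1+\kappa_i^{2}}$, and since $\kappa$ are the principal curvatures of a solution of (\ref{e1.3}) we have $\sum_{i=1}^{n}\beta_i=n\pi/2-F(\kappa)=n\pi/2-h(x)$. By (A) the function $h$ is continuous on the compact set $\overline{\Omega}$, so $\delta':=n\pi/2-\sup_{\overline{\Omega}}h$ is a strictly positive constant of the known data; hence $\sum_i\beta_i\ge\delta'$ and therefore $\max_i\beta_i\ge\delta'/n$. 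On the other hand Lemma \ref{t2.3}(iii) gives $\kappa_i\ge\kappa_n\ge-\cot\delta$ for every $i$, so $\beta_i\le\pi-\delta$. Thus $\max_i\beta_i$ lies in the closed subinterval $[\delta'/n,\pi-\delta]$ of $(0,\pi)$, on which $\sin^{2}$ is bounded below by the positive constant $c_1:=\min\{\sin^{2}(\delta'/n),\sin^{2}\delta\}$ (its minimum over such an interval being attained at an endpoint, and $\sin^{2}(\pi-\delta)=\sin^{2}\delta$). Combining this with $Ae^{-AF(\kappa)}\ge Ae^{-An\pi/2}>0$ gives $\sum_i g_i\ge c_1Ae^{-An\pi/2}>0$.

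I expect the only delicate point — and the reason the bound is not completely automatic — to be the use of the upper bound $\sup_{\overline{\Omega}}h<n\pi/2$, equivalently the fact that $\max_i\beta_i$ stays bounded away from $0$. Without it the quantity degenerates: for $\kappa=(t,\dots,t)$ with $t\to\infty$ one has $F(\kappa)\to n\pi/2$ while $\sum_i\frac{1}{1+\kappa_i^{2}}=n/(1+t^{2})\to0$. Since this upper bound comes for free from the compactness of $\overline{\Omega}$ and the continuity of $h$, no real obstacle remains; the rest is elementary trigonometry.
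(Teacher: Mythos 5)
Your proposal is correct and follows essentially the same route as the paper: both arguments reduce the sum to the single term coming from the smallest principal curvature $\kappa_{n}$, which is controlled from below by Lemma \ref{t2.3}(iii) and from above because $\sup_{\overline{\Omega}}F=\sup_{\overline{\Omega}}h<n\frac{\pi}{2}$ on the compact set $\overline{\Omega}$ (the paper writes this as $|\kappa_{n}|\leq C_{\delta}$, you phrase it trigonometrically via $\beta_{n}\in[\delta'/n,\pi-\delta]$, and your first paragraph is just the paper's (\ref{e3.6})). Your remark that the constant really also involves $n\frac{\pi}{2}-\sup_{\overline{\Omega}}h$, not $\delta$ alone, is a fair observation, but it applies equally to the paper's own constant and is covered by its convention on constants depending on the known data.
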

\begin{proof}
Using Lemma \ref{t2.3} and $\sup_{\overline{\Omega}}F<n\frac{\pi}{2}$, we have an estimate for the smallest eigenvalue
$$|\kappa_{n}|\leq C_{\delta},$$
where $C_{\delta}$ is  a positive constant depending on $\delta$.
Therefore there exist two positive constants $\Lambda_{1}$ and $\Lambda$  only depending on $\delta$ such that
$$\sum_{i}g_{i}=Ae^{-AF(\kappa)}\sum_{i}\frac{1}{1+\kappa_{i}^{2}}\geq \Lambda_{1}\frac{1}{1+\kappa_{n}^{2}}\geq \Lambda_{1}\frac{1}{1+ C_{\delta}^{2}}\geq \Lambda>0.$$
\end{proof}
In order to prove  Lemma \ref{t4.2}, we need to use the knowledge of \cite{CNS3} as follows.  From the proof of Lemma 3.4,  we have a bound for
\begin{equation}\label{e4.3}
C_{7}=2\max_{\overline{\Omega}} w,
\end{equation}
where $w=\sqrt{1+|Du|^{2}}$. Let
\begin{equation}\label{e4.4}
\widetilde{\theta}\triangleq\frac{1}{w},
\end{equation}
\begin{equation}\label{e4.5}
a=\frac{1}{k}\triangleq\frac{1}{2}\min_{\overline{\Omega}}\widetilde{\theta}.
\end{equation}
Obviously we have
\begin{equation}\label{e4.6}
\frac{1}{\widetilde{\theta}-a}\leq\frac{1}{a}=C_{7}.
\end{equation}
  For any $x^{0}\in \Omega$,  it is convenient to use the new coordinates by defining the surface by $\zeta(y)$, where $y$ are tangential coordinates to the surface at the point $(x^{0},u(x^{0}))$. We choose a local orthonormal frame $\{e_{i}\}$ for $1\leq i\leq n$ in neighborhood of $(x^{0},u(x^{0}))$ in $\zeta(y)$, and $e_{n+1}$ is the normal.   Introduce the new orthonormal vectors
\begin{equation*}
\varepsilon_{1},\cdots,\varepsilon_{n},\varepsilon_{n+1},
\end{equation*}
where $\varepsilon_{n+1}$ being the normal vector at $x^{0}$ with $\varepsilon_{n+1}=\frac{1}{w}(-u_{1},\cdots,-u_{n},1)$ and $\varepsilon_{1}$ corresponding to the tangential direction at $x^{0}$ with largest principal curvature. After proper rotation, we represent the surface near $(x^{0},u(x^{0}))$  by tangential coordinates $y_{1},\cdots,y_{n}$ and $\zeta(y)$ (summation is of the from 1 to $n$)
\begin{equation*}
x_{j}e_{j}+u(x)e_{n+1}=x_{j}^{0}e_{j}+u(x^{0})e_{n+1}+y_{j}\varepsilon_{j}+\zeta(y)\varepsilon_{n+1},
\end{equation*}
so $\nabla \zeta(0)=0$ and $x=x^{0}$ if and only if $y=0$. In this case we set that
$\Upsilon=\sqrt{1+|\nabla \zeta|^{2}}$. Following \cite{CNS5}, the first fundamental form of the graph of $\zeta(y)$ is
$$\widetilde{g}_{ij}=\delta_{ij}+\zeta_{i}\zeta_{j}. $$
The second fundamental form is given by
$$\widetilde{h}_{ij}=\frac{\zeta_{ij}}{\sqrt{1+|\nabla \zeta|^{2}}}.$$
Thus the normal curvature in the $\varepsilon_{1}$ direction is
\begin{equation*}
\kappa= \frac{\widetilde{h}_{11}}{\widetilde{g}_{11}}=\frac{\zeta_{11}}{(1+\zeta_{1}^{2})\Upsilon}.
\end{equation*}
In the $y$-coordinate we have the normal vector
\begin{equation}\label{e4.7}
N=-\frac{1}{\Upsilon}\zeta_{j}\varepsilon_{j}+\frac{1}{\Upsilon}\varepsilon_{n+1},
\end{equation}
\begin{equation}\label{e4.8}
\widetilde{\theta}=\frac{1}{w}=N\cdot e_{n+1}=\frac{1}{\Upsilon w}-\frac{1}{\Upsilon }\sum a_{j}\zeta_{j},
\end{equation}
where $a_{j}=\varepsilon_{j}\cdot e_{n+1}$, thus $\sum a_{j}^{2} \leq 1$. And  the equation (\ref{e3.5}) locally reads as
\begin{equation}\label{e4.8a}
G(\mathcal{A})=\widetilde{\psi}(y),
\end{equation}
where $\widetilde{\psi}(y)\triangleq \psi(x).$  By $\nabla \zeta(0)=0$, it follows from (\ref{e4.8}) that  at $y=0$  we have
\begin{equation}\label{e4.9}
\widetilde{\theta}_{i}=-a_{i}\zeta_{ii},\quad  i=1,\cdots,n.
\end{equation}
\begin{equation}\label{e4.10}
\widetilde{\theta}_{ii}=-a_{j}\zeta_{jii}-\frac{\zeta_{ii}^{2}}{w}, \quad i=1,\cdots,n.
\end{equation}
\begin{lemma}\label{t4.3}
 We claim that $\zeta_{1j}(x_{0})=0$ for $j>1$ at $x_{0}\in\partial\Omega$.
\end{lemma}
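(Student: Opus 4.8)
The plan is to read the identity off directly from the geometry of the adapted frame together with the normalization $\nabla\zeta(0)=0$; there is essentially no analytic content here, only a careful bookkeeping of the two frames $\{e_i\}$ and $\{\varepsilon_i\}$.

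First I would evaluate the fundamental forms of the graph of $\zeta$ at $y=0$. Since $\nabla\zeta(0)=0$ we have $\Upsilon(0)=1$, so from $\widetilde g_{ij}=\delta_{ij}+\zeta_i\zeta_j$ and $\widetilde h_{ij}=\zeta_{ij}/\Upsilon$ we get $\widetilde g_{ij}(0)=\delta_{ij}$ and $\widetilde h_{ij}(0)=\zeta_{ij}(0)$. Differentiating the parametrization $x_je_j+u(x)e_{n+1}=x_j^0e_j+u(x^0)e_{n+1}+y_j\varepsilon_j+\zeta(y)\varepsilon_{n+1}$ in $y_i$ and setting $y=0$ shows that $\{\varepsilon_1,\dots,\varepsilon_n\}$ is an orthonormal basis of the tangent space of $\Gamma$ at $(x^0,u(x^0))$ with $\varepsilon_{n+1}$ the unit normal there. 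Consequently, in this basis the Weingarten operator of $\Gamma$ at $(x^0,u(x^0))$ is represented by the symmetric matrix $[\zeta_{ij}(0)]$, whose eigenvalues are the principal curvatures $\kappa_1\ge\cdots\ge\kappa_n$ and whose eigenvectors are the principal directions.

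Next I would invoke the defining property of $\varepsilon_1$: by construction it is the tangential direction at $x^0$ realizing the \emph{largest} principal curvature, i.e. $\varepsilon_1$ is a unit eigenvector of $[\zeta_{ij}(0)]$ for the eigenvalue $\kappa_1$ (the variational/Lagrange-multiplier characterization of the top eigenvalue of a symmetric matrix; if $\kappa_1$ is repeated one simply picks $\varepsilon_1$ inside the corresponding eigenspace). In the $y$-coordinates $\varepsilon_1$ is the first coordinate vector, so the eigenvector relation $\sum_{l}\zeta_{jl}(0)\,(\varepsilon_1)_l=\kappa_1(\varepsilon_1)_j$ becomes $\zeta_{j1}(0)=\kappa_1\delta_{j1}$, and by symmetry of the Hessian,
\[
\zeta_{1j}(x_0)=\zeta_{j1}(0)=\kappa_1\delta_{j1}=0\qquad\text{for all }j>1,
\]
which is the assertion.

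I do not expect a genuine obstacle: the only point requiring care is that ``the tangential direction of largest principal curvature'' must be understood as a true eigenvector of the second fundamental form, not merely as a maximizer of the normal curvature over unit tangent vectors — but these coincide by the standard variational characterization of eigenvalues, and once this is granted the lemma is immediate. (The same conclusion could be reached working in the original coordinates after rotating so that $[a_{ij}]$, equivalently $[u_{ij}]$, is diagonal, but the $\zeta$-frame set up above makes the argument cleanest.)
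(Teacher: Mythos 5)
Your proposal is correct and is essentially the paper's argument: the paper also exploits that $\varepsilon_1$ is the direction of largest normal curvature, obtaining $\zeta_{1j}(x_0)=0$ from the first-order condition $\frac{d}{d\theta}\zeta_{e_\theta e_\theta}\big|_{\theta=0}=0$ for the rotation $e_\theta=e_1\cos\theta+e_j\sin\theta$, which is exactly the variational characterization of the top eigenvector that you invoke (your version just states the eigenvector relation $\zeta_{j1}(0)=\kappa_1\delta_{j1}$ directly, after checking $\widetilde g_{ij}(0)=\delta_{ij}$). No gap; the two proofs differ only in packaging.
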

\begin{proof}
Let $e_{\theta}=e_{1}\cos \theta+e_{j}\sin\theta$. Then
 \begin{equation*}
\zeta_{e_{\theta}e_{\theta}}=\zeta_{11}\cos^{2}\theta+2\zeta_{1j}\cos\theta \sin\theta +\zeta _{jj}\sin^{2}\theta
\end{equation*}
has a maximum at $\theta=0$. It follows that
 \begin{equation*}
\frac{d }{d \theta}\big(\zeta_{e_{\theta}e_{\theta}}(x_{0})\big)\Big|_{\theta=0}=0.
\end{equation*}
This gives $\zeta_{1j}(x_{0})=0$.
\end{proof}

Using the knowledge of \cite{CNS3} as above, then we show the following\par

\noindent{\bf Proof of Lemma \ref{t4.2}.}

Set \begin{equation*}
M=\max_{x\in\overline{\Omega},1\leq i\leq n}\frac{1}{\widetilde{\theta}-a}\kappa_{i}(x),
\end{equation*}
 where  the maximum is taken over all principal curvatures $\kappa_{i}$.  We  assume that the maximum is attained at some point $x^{0}\in\Omega$ and $M>0$.  At this point corresponding to $y=0$, the function
\begin{equation*}
\frac{1}{\widetilde{\theta}-a}\frac{\zeta_{11}}{(1+\zeta_{1}^{2})\Upsilon}
\end{equation*}
takes its maximum  value equal to $M$ by the previous assumption. At the point  $y=0$, because $y_{1}$ direction is a direction of principal curvature, then by Lemma \ref{t4.3}, we get that $\zeta_{1j}=0$ for $j>1$. By rotating the $\varepsilon_{2},\cdots, \varepsilon_{n}$, without loss of generality we may have that $\zeta_{ij}(0)$ is diagonal. At $y=0$, the function $\ln\label{e4.12}\frac{1}{\widetilde{\theta}-a}\frac{\zeta_{11}}{(1+\zeta_{1}^{2})\Upsilon}$ reaches its maximum value, and taking the derivative twice in a row can be concluded as follows
\begin{equation}\label{e4.11}
\frac{\zeta_{11i}}{\zeta_{11}}-\frac{\widetilde{\theta}_{i}}{\widetilde{\theta}-a}-\frac{2\zeta_{1}\zeta_{1i}}{1+\zeta_{1}^{2}}-\frac{\Upsilon_{i}}{\Upsilon}=0,   \quad  i=1,\cdots n,
\end{equation}
\begin{equation}\label{e4.12}
0\geq\frac{\zeta_{11ii}}{\zeta_{11}}-\frac{\zeta_{11i}^{2}}{\zeta_{11}^{2}}-\Big(\frac{\widetilde{\theta}_{i}}{\widetilde{\theta}-a}\Big)_{i}-2\zeta_{1i}^{2}-\zeta_{ii}^{2},   \quad  i=1,\cdots n.
\end{equation}
Observing (\ref{e2.4}), at the origin we also see that $a_{ij}=\zeta_{ij}$ is diagonal and
\begin{equation}\label{e4.13}
\begin{aligned}
\frac{\partial a_{ij}}{\partial y_{k}}&=a_{ij,k}=\zeta_{ijk},\\
\frac{\partial^{2} a_{ij}}{\partial^{2} y_{1}}&=a_{ij,11}=\zeta_{ij11}-\zeta_{11}^{2}\Big(\zeta_{ij}+\delta_{i1}\zeta_{1j}+\delta_{j1}\zeta_{1i}\Big).
\end{aligned}
\end{equation}
It should be noted that  at a matrix $\mathcal{A}=[a_{ij}]$ which is diagonal,
\begin{equation}\label{e4.13a}
\frac{\partial G}{\partial a_{ij}}=\frac{\partial G}{\partial\kappa_{i}}\delta_{ij}=g_{i}\delta_{ij}.
\end{equation}
We proceed the differentiate the equation
  $$G(\mathcal{A})=\widetilde{\psi}(y),$$
then there holds
\begin{equation*}
\frac{\partial G}{\partial a_{ij}}\frac{\partial a_{ij}}{\partial y_{k}}=\widetilde{\psi}_{k}(y).
\end{equation*}
Using (\ref{e4.13}) and (\ref{e4.13a}), we find that at $y=0$, $$g_{i}\zeta_{iik}=\widetilde{\psi}_{k}$$
for all $k$.
Using the concavity of $G$ and (\ref{e4.13}), one can see that  at $y=0$,
 $$\widetilde{\psi}_{11}=\frac{\partial^{2} G}{\partial  a_{ij}\partial  a_{kl}}\frac{\partial a_{ij}}{\partial y_{1}}\frac{\partial a_{kl}}{\partial y_{1}}+\frac{\partial G}{\partial a_{ij}}a_{ij,11}\leq \frac{\partial G}{\partial a_{ij}}a_{ij,11}.$$   Thus  we have, at $y=0$,
\begin{equation}\label{e4.14a}
\begin{aligned}
\widetilde{\psi}_{11}
&\leq\frac{\partial G}{\partial a_{ij}}a_{ij,11} = g_{i}(\zeta_{ii11}-\zeta_{ii}\xi_{11}^{2})-2g_{1}\zeta_{11}^{3}\\
&\leq g_{i}\Big(\frac{\zeta_{i11}^{2}}{\zeta_{11}}+\zeta_{11}(\frac{\widetilde{\theta}_{i}}{\widetilde{\theta}-a})_{i}+2\zeta_{11}\zeta_{1i}^{2}+\zeta_{11}\zeta_{ii}^{2}-\zeta_{ii}\zeta_{11}^{2}\Big)-2g_{1}\zeta_{11}^{3}\\
&=\zeta_{11}g_{i}\Big(\frac{\zeta_{i11}^{2}}{\zeta_{11}^{2}}+(\frac{\widetilde{\theta}_{i}}{\widetilde{\theta}-a})_{i}+\zeta_{ii}^{2}-\zeta_{11}\zeta_{ii}\Big)\\
&=\zeta_{11}g_{i}\Big[(\frac{\widetilde{\theta}_{i}}{\widetilde{\theta}-a})^{2}+(\frac{\widetilde{\theta}_{i}}{\widetilde{\theta}-a})_{i}+\zeta_{ii}^{2}-\zeta_{11}\zeta_{ii}\Big]\\
&=\zeta_{11}g_{i}\Big[\frac{\widetilde{\theta}_{ii}}{\widetilde{\theta}-a}+\zeta_{ii}^{2}-\zeta_{11}\zeta_{ii}\Big]\\
&=\zeta_{11}g_{i}\Big[\frac{1}{\widetilde{\theta}-a}(-a_{j}\zeta_{jii}-\frac{\zeta_{ii}^{2}}{w})+\zeta_{ii}^{2}-\zeta_{11}\zeta_{ii}\Big]\\
&=-\zeta_{11}a_{j}g_{i}\frac{\zeta_{jii}}{\widetilde{\theta}-a}+\zeta_{11}g_{i}\Big[(1-\frac{1}{\widetilde{\theta}-a}\frac{1}{w})\zeta_{ii}^{2}-\zeta_{11}\zeta_{ii}\Big]\\
&=-\zeta_{11}a_{j}\widetilde{\psi}_{j}\frac{1}{\widetilde{\theta}-a}+\zeta_{11}g_{i}\Big[(1-\frac{\widetilde{\theta}}{\widetilde{\theta}-a})\zeta_{ii}^{2}-\zeta_{11}\zeta_{ii}\Big]\\
&=-\zeta_{11}a_{j}\widetilde{\psi}_{j}\frac{1}{\widetilde{\theta}-a}-\zeta_{11}g_{i}\zeta_{ii}^{2}\frac{a}{\widetilde{\theta}-a}-\zeta_{11}^{2}g_{i}\zeta_{ii}
.
\end{aligned}
\end{equation}
Here the first step follows from (\ref{e4.13}) while the  second step comes from (\ref{e4.12}).
Using (\ref{e4.14a}) we have
\begin{equation}\label{e4.14b}
\zeta_{11}g_{i}\zeta_{ii}^{2}\frac{a}{\widetilde{\theta}-a}+\zeta_{11}^{2}g_{i}\zeta_{ii}\leq-\zeta_{11}a_{j}\widetilde{\psi}_{j}\frac{1}{\widetilde{\theta}-a}-\widetilde{\psi}_{11}.
\end{equation}
Recall that   $\widetilde{\psi}(y)=-e^{-Ah(y)}$,
a routine computation gives rise to
$$\widetilde{\psi}_{11}=-A^{2}e^{-Ah(y)}(\frac{\partial h}{\partial y_{1}})^{2}+Ae^{-Ah(y)}\frac{\partial^{2} h}{\partial^{2} y_{1}}.$$
Since $\sum a_{j}^{2}<1$, we now choose a sufficiently  large   constant $A$ so that
$$-\zeta_{11}a_{j}\widetilde{\psi}_{j}\frac{1}{\widetilde{\theta}-a}-\widetilde{\psi}_{11}\leq C_{8}(1+\zeta_{11}),$$
where $C_{8}$ is a positive constant depending on $\overline{\Omega}$, $\widetilde{\psi}$, $\underline{u}$ and $\widetilde{\theta}$.
 Thus
 $$\zeta_{11}g_{i}\zeta_{ii}^{2}\frac{a}{\widetilde{\theta}-a}+\zeta_{11}^{2}g_{i}\zeta_{ii}\leq C_{8}(1+\zeta_{11}).$$
Since   (\ref{e4.14b}) and the definition of $M$,  we have
\begin{equation}\label{e4.14}
Ma\sum_{i} g_{i}\zeta_{ii}^{2}+M^{2}(\widetilde{\theta}-a)^{2}\sum_{i} g_{i}\zeta_{ii}\leq C_{9}(1+M).
\end{equation}
where $C_{9}$  is a positive constant depending only on the known data.
With the aid of (\ref{e3.3}), we can see that $\sum _{i}g_{i}\zeta_{ii}=\sum_{i}g_{i}\kappa_{i}\geq-\widetilde{c}$.
 It follows that from (\ref{e4.14}) that
 \begin{equation*}
Ma\sum_{i} g_{i}\zeta_{ii}^{2}\leq \widetilde{c}M^{2}(\widetilde{\theta}-a)^{2}+ C_{9}(1+M).
\end{equation*}
 Using (\ref{e3.4}) and the definition of $M$,  there exists a positive constant $C_{10}$ such that
\begin{equation*}
C_{10}M^{3}\sum_{i}g_{i}\leq \widetilde{c}M^{2}(\widetilde{\theta}-a)^{2}+ C_{9}(1+M).
\end{equation*}
By Lemma \ref{t4.2.1}, we obtain that $\sum_{i}g_{i}\geq\Lambda>0$, then
\begin{equation*}
M\leq C_{6},
\end{equation*}
where a suitable constant $C_{6}$  depends on $\overline{\Omega}$, $\| \underline{u}\|_{C^{4}(\overline{\Omega})}$, $\|h\|_{C^{2}(\overline{\Omega})}$, $\delta$. The proof is completed.

\section{Proof of Lemma 4.1}

The proof of Lemma \ref{t4.1} follows from the following steps.

First, we need to estimate the pure tangential second derivatives.  Suppose that the point is the origin and  the $x_{n}-$axis is the inner normal there. We may assume that the boundary near 0 is represented by
\begin{equation}\label{e5.1}
x_{n}=\rho(x')=\frac{1}{2}\sum_{\alpha,\beta< n}\rho_{\alpha\beta}(0)x_{\alpha}x_{\beta}+\mathrm{O}(|x'|^{3}),
\end{equation}
where $x'=(x_{1},x_{2},\cdots,x_{n-1})$. Using the Dirichlet boundary condition, we get
\begin{equation}\label{e5.2}
(u-\underline{u})(x',\rho(x'))=0.
\end{equation}
Taking two derivatives for  both sides of (\ref{e5.2}), we obtain
\begin{equation*}
(u-\underline{u})_{\alpha\beta}(0)=-(u-\underline{u})_{n}(0)\rho_{\alpha\beta}(0),\quad 1\leq\alpha,\beta\leq n-1.
\end{equation*}
From the boundary gradient estimate it follows that
\begin{equation*}
|u_{\alpha\beta}(0)|\leq C_{11},\quad \alpha,\beta<n.
\end{equation*}\par
The next thing to do in the proof is to estimate the mixed normal-tangential derivative $u_{\alpha n}(0)$ for $\alpha<n$.
 Using the pre-knowledge in the second section, we can obtain
\begin{equation}\label{e5.4}
\widetilde{G}_{ij}=G^{kl}\frac{\partial a_{kl}}{\partial r_{ij}}=\frac{1}{w}b^{ik}G^{kl}b^{lj},
\end{equation}
and
\begin{equation*}
\widetilde{G}_{i}=G^{kl}\frac{ \partial a_{kl}}{\partial p_{i}}=G^{kl}\frac{\partial}{\partial p_{i}}\Big( \frac{1}{w}b^{km}b^{ql}\Big)u_{mq}.
\end{equation*}
 Using a simple calculation, we have
\begin{equation}\label{e5.5}
\begin{aligned}
\widetilde{G}_{i}&=-\frac{u_{i}}{w^{2}}G^{kl}a_{kl}-\frac{2}{w}G^{kl}a_{lm}b^{ik}u_{m}\\
&=-\frac{u_{i}}{w^{2}}\sum _{j}g_{j}\kappa_{j}-\frac{2}{w}G^{kl}a_{lm}b^{ik}u_{m}.
\end{aligned}
\end{equation}
 Recalling (\ref{e3.7}), we see that $|Du|$ is bounded and the eigenvalues of $[b^{ij}]$ is bounded between two controlled positive constants.
Since (\ref{e5.5}), we see that in fact
\begin{equation}\label{e5.6a}
\sum_{i}|\widetilde{G}_{i}|\leq C_{12}\sum g_{i}|\kappa_{i}|.
\end{equation}
 Moreover, according to  Lemma \ref{t3.4}, (\ref{e2.3}) and (\ref{e5.4}), there exist two positive constants $C_{13}$, $C_{14}$ such that
\begin{equation}\label{e5.6}
C_{13}\sum_{i}G^{ii}\leq \sum_{i} \widetilde{G}_{ii}\leq C_{14}\sum_{i} G^{ii}.
\end{equation}
We also have
\begin{equation}\label{e5.7}
\sum_{i}\frac{\partial \widetilde{G}}{\partial \lambda_{i}}\lambda_{i}=\widetilde{G}_{ij}u_{ij}=\frac{1}{w}b^{ik}G^{kl}b^{lj}u_{ij}=G^{kl}a_{kl}=\sum g_{i}\kappa_{i}.
\end{equation}
where $(\lambda_{1},\cdots,\lambda_{n})$ are the eigenvalues of Hessian matrix $D^{2}u$ of $x\in\Omega$. From Lemma \ref{t2.3}, we have some  positive constants $s_{1}$ , $s_{2}$ and $s_{3}$ such that
\begin{equation}\label{e5.6ac}
0\leq \sum_{i}\frac{\partial G}{\partial \kappa_{i}}\leq s_{1},
\end{equation}
and
 $$s_{2}\leq \sum_{i}\frac{\partial G}{\partial \kappa_{i}}\kappa_{i}^{2}=Ae^{-AF(\kappa)}\sum_{i}\frac{\kappa_{i}^{2}}{1+\kappa_{i}^{2}}=Ae^{-AF(\kappa)}\sum_{i}\Big(1-\frac{1}{1+\kappa_{i}^{2}}\Big)\leq s_{3}.$$
 By taking   (\ref{e5.6}) and (\ref{e5.6ac}) into consideration, then there exists a positive constant $C_{15}$ such that
\begin{equation}\label{e5.8}
\sum\widetilde{G}_{ii}\leq C_{15}.
\end{equation}
\par
We consider  the operator
\begin{equation*}
L=\widetilde{G}_{ij}D_{i}D_{j}+\widetilde{G}_{i}D_{i}.
\end{equation*}
It is  necessary to obtain a version of differential inequality for the later construction of  auxiliary functions.\par
 By the argument as before in this section, one can conclude that for  any  point $x_{0}\in \partial\Omega$, we rotate the graph such that  at  $x_{0}$,  $D(u-\underline{u})=0$. But the principle curvatures are invariant through such transformation. Then for any sufficiently small positive constant $\epsilon>0$,  there exists $\delta'>0$ depending only on $\epsilon$,  $\Omega$, $\underline{u}$ and $h$ such that in $\Omega\cap B_{\delta'}(x_{0})$, we have
\begin{equation}\label{e5.5ab}
|D(u-\underline{u})|\leq\varepsilon.
\end{equation}
Based on the above argument, we obtain the following useful differential inequality.
\begin{Corollary}\label{t5.2}
There   exists  uniform  positive constant  $C_{16}$ such that for $|\lambda|\geq R_{0}$, we have
\begin{equation}\label{e5.5c}
L(u-\underline{u})\leq -C_{16}, \quad  \mathrm{in} \quad \Omega\cap B_{\delta'}(x_{0}).
\end{equation}
where $R_{0}$ is a  positive constant depending on $\tau$.
\end{Corollary}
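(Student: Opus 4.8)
The plan is to decompose $L(u-\underline{u})$ into its second-order and first-order parts,
\[
L(u-\underline{u})=\widetilde{G}_{ij}\big[u_{ij}-\underline{u}_{ij}\big]+\widetilde{G}_i(u-\underline{u})_i,
\]
and to estimate each of them. The second-order part is handled directly by Corollary \ref{c3.5}: under the standing assumption $|\lambda|\ge R_0$ it gives $\widetilde{G}_{ij}[\underline{u}_{ij}-u_{ij}]\ge\tau>0$, hence $\widetilde{G}_{ij}[u_{ij}-\underline{u}_{ij}]\le-\tau$, where $\tau$ (and thus the threshold $R_0$) depends only on $\underline{u}$ and $\delta$. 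So the whole point is to show that the first-order term $\widetilde{G}_i(u-\underline{u})_i$, which a priori has no definite sign, can be absorbed into $-\tau$ after restricting to a small ball around $x_0$.

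To do this I would combine two facts. First, $\sum_i|\widetilde{G}_i|$ is uniformly bounded by the known data: by (\ref{e5.6a}), $\sum_i|\widetilde{G}_i|\le C_{12}\sum_i g_i|\kappa_i|$, and since $g_i=Ae^{-AF(\kappa)}(1+\kappa_i^2)^{-1}$ with $F(\kappa)\ge(n-2)\frac{\pi}{2}+\delta>0$ and $|t|(1+t^2)^{-1}\le\frac12$, each term obeys $g_i|\kappa_i|\le\frac{A}{2}$, so that $\sum_i|\widetilde{G}_i|\le\frac{nA}{2}C_{12}=:C_{17}$, a constant depending only on $\delta$ and $n$. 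Second, after the normalizing rotation that makes $D(u-\underline{u})(x_0)=0$ — under which the principal curvatures, and hence all the quantities just used, are unchanged — estimate (\ref{e5.5ab}) provides $|D(u-\underline{u})|\le\varepsilon$ throughout $\Omega\cap B_{\delta'}(x_0)$, with $\delta'$ depending only on $\varepsilon$, $\Omega$, $\underline{u}$ and $h$. Consequently $|\widetilde{G}_i(u-\underline{u})_i|\le C_{17}\varepsilon$ on $\Omega\cap B_{\delta'}(x_0)$.

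Putting the two pieces together yields $L(u-\underline{u})\le-\tau+C_{17}\varepsilon$ on $\Omega\cap B_{\delta'}(x_0)$ whenever $|\lambda|\ge R_0$, and the proof finishes by fixing once and for all $\varepsilon:=\tau/(2C_{17})$ (a legitimate universal choice, since both $\tau$ and $C_{17}$ depend only on the known data) together with the associated $\delta'$, leaving $L(u-\underline{u})\le-\tau/2=:-C_{16}$. I expect the genuinely delicate point to be exactly this absorption step: one must make sure that the smallness threshold imposed on $\varepsilon$, and therefore the radius $\delta'$, can be taken independently of the boundary point $x_0$, which is precisely why (\ref{e5.5ab}) was recorded with $\delta'$ controlled solely by $\varepsilon$ and the data. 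The remaining manipulations — expressing $\widetilde{G}_{ij}$ and $\widetilde{G}_i$ through $G^{kl}$, $g_i$, $\kappa_i$, $b^{ij}$ and deriving (\ref{e5.6a}) — are the routine computations already carried out above.
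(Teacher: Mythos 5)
Your proposal is correct and follows essentially the same route as the paper: split $L(u-\underline{u})$ into the second-order part, controlled by $-\tau$ via Corollary \ref{c3.5}, and the first-order part, absorbed using (\ref{e5.6a}) together with the smallness of $|D(u-\underline{u})|$ from (\ref{e5.5ab}) on $\Omega\cap B_{\delta'}(x_0)$, ending with $C_{16}=\tau/2$. Your explicit bound $g_i|\kappa_i|\le A/2$ merely makes precise what the paper leaves implicit, so there is nothing further to add.
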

\begin{proof}
Using (\ref{e5.5ab}) and (\ref{e5.6a}), we can see that  there exists  sufficiently small positive constant $\epsilon_{1}$
with $\epsilon_{1}\leq \frac{\tau}{2}$
such that in  $\Omega\cap B_{\delta'}(0)$, we have
\begin{equation}\label{e5.7ab}
\widetilde{G}_{i}(u-\underline{u})_{i}\leq\epsilon_{1}.
\end{equation}
From the Corollary \ref{c3.5} and (\ref{e5.7ab}), for $|\lambda|\geq R_{0}$ we have
\begin{equation}\label{e5.7a}
\begin{aligned}
L(u-\underline{u})&=\widetilde{G}_{ij}(u-\underline{u})_{ij}+\widetilde{G}_{i}(u-\underline{u})_{i}\\
&\leq-\tau+\epsilon_{1}\\
&\leq-C_{16}
\end{aligned}
\end{equation}
where $C_{16}=\frac{\tau}{2}$.
\end{proof}
\begin{lemma}\label{t5.1}
 There exist some uniform positive constants  $\delta'$ sufficiently small depending only on $\underline{u}, h, \Omega$ and choose positive constants $t_{1}, t_{2}, \epsilon_{2}$ depending on   $\delta'$
such that the function
\begin{equation*}
v=t_{1}(u-\underline{u})+t_{2}d-d^{2},
\end{equation*}
satisfy

\begin{equation*}
\begin{aligned}
Lv&\leq -\epsilon_{2},  &&\quad \mathrm{in} \quad \Omega\cap B_{\delta'}(0),\\
v&\geq0,   &&\quad \mathrm{on}\quad\partial(\Omega\cap B_{\delta'}(0)).
\end{aligned}
\end{equation*}
where $d(x)=d(x,\partial\Omega)$ is the distance function.
\end{lemma}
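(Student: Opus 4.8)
The idea is to let the three pieces of $v$ play complementary roles: $t_{1}(u-\underline{u})$ will supply a large negative contribution to $Lv$ when the Hessian is large, through Corollary \ref{t5.2}; $-d^{2}$ will supply a definite negative contribution when the Hessian is bounded, through uniform ellipticity of $L$; and $t_{2}d$ together with $u-\underline{u}\ge 0$ will guarantee $v\ge 0$ on the boundary. Throughout we work in the rotated coordinates at $x_{0}=0$ fixed in this section, and take $\delta'$ small enough that $d(\cdot)=\mathrm{dist}(\cdot,\partial\Omega)$ is $C^{2}$ on $\Omega\cap B_{\delta'}(0)$ with $|\nabla d|=1$ and $|D^{2}d|$ bounded there. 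Note first that for $x\in B_{\delta'}(0)$ one has $d(x)\le|x|<\delta'$, so choosing $t_{2}=\delta'$ gives $t_{2}d-d^{2}=d(t_{2}-d)\ge 0$ on $\Omega\cap B_{\delta'}(0)$, and since $u-\underline{u}\ge 0$ by Lemma \ref{t3.1} we obtain $v\ge 0$ on all of $\overline{\Omega\cap B_{\delta'}(0)}$; this settles the boundary requirement for any $t_{1}>0$.

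For the differential inequality I would expand $Lv=t_{1}L(u-\underline{u})+t_{2}Ld-2\widetilde{G}_{ij}d_{i}d_{j}-2dLd$, where $-2\widetilde{G}_{ij}d_{i}d_{j}\le 0$ by ellipticity. By \eqref{e5.8} we have $\sum_{i}\widetilde{G}_{ii}\le C_{15}$, and by \eqref{e5.6a}, using $|\kappa_{i}|/(1+\kappa_{i}^{2})\le\tfrac12$ and $F\ge(n-2)\tfrac{\pi}{2}+\delta$, we have $\sum_{i}|\widetilde{G}_{i}|\le C_{17}$; hence $|Ld|=|\widetilde{G}_{ij}d_{ij}+\widetilde{G}_{i}d_{i}|\le C_{18}$ uniformly. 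Then I would distinguish two cases. If $|\lambda|\ge R_{0}$, Corollary \ref{t5.2} gives $L(u-\underline{u})\le -C_{16}$, so, using $t_{2}=\delta'$ and $|d|<\delta'$, $Lv\le -t_{1}C_{16}+3\delta'C_{18}$. If $|\lambda|<R_{0}$, then $\|D^{2}u\|$ is bounded, hence $\mathcal{A}$ has all eigenvalues bounded by $R_{0}$ and $g_{i}=Ae^{-AF(\kappa)}(1+\kappa_{i}^{2})^{-1}$ is bounded below by a positive constant; since the eigenvalues of $[b^{ij}]$ lie in $[(1+C_{4}^{2})^{-1/2},1]$ by Lemma \ref{t3.4}, the identity $\widetilde{G}_{ij}=\tfrac1w b^{ik}G^{kl}b^{lj}$ yields $[\widetilde{G}_{ij}]\ge\theta_{0}\,\mathrm{Id}$ for a uniform $\theta_{0}>0$, and also $|L(u-\underline{u})|\le C_{19}$ because $\|D^{2}(u-\underline{u})\|$, $|D(u-\underline{u})|$, $\sum\widetilde{G}_{ii}$ and $\sum|\widetilde{G}_{i}|$ are all bounded; thus, using $|\nabla d|=1$, $Lv\le t_{1}C_{19}+3\delta'C_{18}-2\theta_{0}$.

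It remains to choose the constants. Both estimates together reduce matters to finding $t_{1}$ with
$$\frac{\epsilon_{2}+3\delta'C_{18}}{C_{16}}\ \le\ t_{1}\ \le\ \frac{2\theta_{0}-\epsilon_{2}-3\delta'C_{18}}{C_{19}},$$
and this interval is non-empty provided $(C_{16}+C_{19})(\epsilon_{2}+3\delta'C_{18})\le 2C_{16}\theta_{0}$, which holds once $\epsilon_{2}$ and $\delta'$ are taken small, depending only on the known data $C_{16},C_{18},C_{19},\theta_{0}$ (hence on $\underline{u},h,\Omega,\delta,n$). Fixing such $\delta'$, then $\epsilon_{2}$, then $t_{1}$ in this interval and $t_{2}=\delta'$, we get $Lv\le-\epsilon_{2}$ in $\Omega\cap B_{\delta'}(0)$ in both cases, which completes the construction.

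The main obstacle is precisely this reconciliation of the two regimes: Corollary \ref{t5.2} only operates when $|\lambda|\ge R_{0}$ and pushes $t_{1}$ upward, whereas the bounded-Hessian regime caps $t_{1}$ from above. The argument closes because the lower cap on $t_{1}$ is of size $O(\epsilon_{2}+\delta')$ while the upper cap is of size $O(\theta_{0})$, so shrinking $\epsilon_{2}$ and $\delta'$ opens the window — but this requires first isolating the uniform lower ellipticity bound $\theta_{0}$ valid on $\{|\lambda|<R_{0}\}$, which is where the gradient bound of Lemma \ref{t3.4} and the positivity of the $g_{i}$ coming from Lemma \ref{t2.3} enter. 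Everything else is routine bookkeeping with the a priori bounds already collected in Sections 3 and the earlier part of this section.
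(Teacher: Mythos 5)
Your proof is correct and follows essentially the same route as the paper: the same barrier decomposition, the same pointwise dichotomy $|\lambda|\geq R_{0}$ (where Corollary \ref{t5.2} supplies $L(u-\underline{u})\leq -C_{16}$) versus $|\lambda|<R_{0}$ (where bounded curvature gives the uniform lower ellipticity bound used on $-2\widetilde{G}_{ij}D_{i}dD_{j}d$), and the same $O(\delta')$ control of the $Ld$ terms. The only difference is bookkeeping: you select $t_{1}$ from an interval after shrinking $\epsilon_{2}$ and $\delta'$, whereas the paper couples $t_{1}$ to $t_{2}=2\delta'$ via $C_{16}t_{1}=2t_{2}C_{23}$ and then shrinks $\delta'$; both choices close the argument identically.
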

\begin{proof}
 Note that
 $v=t_{1}(u-\underline{u})+(t_{2}-d)d$ and let $t_{2}=2\delta'>d$, it is obvious that $v\geq0$ on $\partial(\Omega\cap B_{\delta'}(0))$. Since $$\widetilde{G}_{ij}D_{i}D_{j}d^{2}=2d\widetilde{G}_{ij}D_{i}D_{j}d+2\widetilde{G}_{ij}D_{i}dD_{j}d,\quad   \quad \mathrm{in} \quad \Omega\cap B_{\delta'}(0),$$
then we have
\begin{equation}\label{e5.10}
\begin{aligned}
Lv&=t_{1}\Big[\widetilde{G}_{ij}(u_{ij}-\underline{u}_{ij})+\widetilde{G}_{i}(u_{i}-\underline{u}_{i})\Big]+t_{2}\widetilde{G}_{ij}D_{i}D_{j}d+t_{2}\widetilde{G}_{i}D_{i}d\\
&~~-2d\widetilde{G}_{ij}D_{i}D_{j}d-2\widetilde{G}_{ij}D_{i}dD_{j}d-2d \widetilde{G}_{i}D_{i}d.\\
&=t_{1}\Big[\widetilde{G}_{ij}(u_{ij}-\underline{u}_{ij})+\widetilde{G}_{i}(u_{i}-\underline{u}_{i})\Big]+(t_{2}-2d)\widetilde{G}_{ij}D_{i}D_{j}d\\
&~~+(t_{2}-2d)\widetilde{G}_{i}D_{i}d-2\widetilde{G}_{ij}D_{i}dD_{j}d.
\end{aligned}
\end{equation}
Since $\partial\Omega$ is a $C^{4}$ bounded hypersurface,  we can assume that $d(x)$ is $C^{4}$ bounded in $\Omega\cap B_{\delta'}(0)$. It can be verified using a result of G-T (see \cite{GT} , Lemma 14.17) that  $d(x)$ satisfy
\begin{equation*}
\begin{aligned}
Dd(x)&=(0,\cdots,0,1),   \\ D^{2}d(x)&=diag(-\frac{\kappa_{1}}{1-\kappa_{1}d},\cdots,-\frac{\kappa_{n-1}}{1-\kappa_{n-1}d},0),
\end{aligned}
 \end{equation*}
 in $\Omega\cap B_{\delta'}(0)$,
 where  $\kappa_1,\cdots, \kappa_{n-1}$ are the principle curvature on $\partial\Omega$. Combining  the boundedness of  $\widetilde{G}_{i}$ and (\ref{e5.8}), we obtain
\begin{equation*}
\begin{aligned}
|\widetilde{G}_{ij}D_{i}D_{j}d|\leq C_{17}, \quad |\widetilde{G}_{i}D_{i}d|\leq C_{18},
\end{aligned}
 \end{equation*}
 where $C_{17}$ and $C_{18}$ are positive constants depending on $\Omega$, $\delta'$, $\delta$ and $h$.
There are two cases to consider: Firstly, we consider that $|\lambda|\leq R_{0}$, where $R_{0}$ is a constant from Corollary \ref{c3.5}.
Then there holds $|\kappa|\leq \tilde{R}_{0}$ for some
constant $\tilde{R}_{0}$ depending only on $R_{0}$ and the boundedness of $|Du|$.
Without loss of generality, we assume  that $[a_{ij}]$ is diagonal.  By making use of (\ref{e5.4}) and the boundedness of $|Du|$, it is also easy to see that
\begin{equation*}
\widetilde{G}_{ij}D_{i}dD_{j}d=\frac{1}{1+\kappa_{l}^{2}}\delta_{kl} \frac{1}{w}b^{ik}b^{jl}D_{i}dD_{j}d\geq\frac{C_{19}}{1+\tilde{R}_{0}^{2}}.
 \end{equation*}
For $|\lambda|\leq R_{0}$, we have
  \begin{equation*}
\begin{aligned}
|\widetilde{G}_{ij}(u_{ij}-\underline{u}_{ij})|&\leq C_{20} , \quad |\widetilde{G}_{i}(u_{i}-\underline{u}_{i})|\leq C_{21}.
\end{aligned}
 \end{equation*}
 It follows from (\ref{e5.10}) that
  \begin{equation*}
  \begin{aligned}
 Lv&\leq t_{1}(C_{20}+C_{21})+(t_{2}-2d)C_{17}+(t_{2}-2d)C_{18}-\frac{2C_{19}}{1+\tilde{R}_{0}^{2}}\\
 & \leq t_{1}C_{22}+(t_{2}-2d)C_{23}-\frac{2C_{19}}{1+\tilde{R}_{0}^{2}}\\
 & \leq t_{1}C_{22}+t_{2}C_{23}-\frac{2C_{19}}{1+\tilde{R}_{0}^{2}}.
\end{aligned}
 \end{equation*}
 Secondly, if  $|\lambda|\geq R_{0}$,
    by using Corollary \ref{t5.2} we can obtain
  \begin{equation*}
  \begin{aligned}
 Lv &\leq -C_{16} t_{1}+(t_{2}-2d)C_{17}+(t_{2}-2d)C_{18}-2\widetilde{G}_{ij}D_{i}dD_{j}d\\
 &\leq-C_{16} t_{1}+(t_{2}-2d)C_{23}-2\widetilde{G}_{ij}D_{i}dD_{j}d\\
 &\leq-C_{16} t_{1}+t_{2}C_{23}.
 \end{aligned}
 \end{equation*}
 We may fix $t_{1}$ and $t_{2}$ such that $C_{16} t_{1}=2t_{2}C_{23}$.
 Then for $|\lambda|\leq R_{0}$, we have
  \begin{equation*}
  \begin{aligned}
 Lv&\leq t_{2}C_{24}-\frac{2C_{19}}{1+\tilde{R}_{0}^{2}}=2\delta'-\frac{2C_{19}}{1+\tilde{R}_{0}^{2}},
\end{aligned}
 \end{equation*}
 and for $|\lambda|\geq R_{0}$, we get
 \begin{equation*}
  \begin{aligned}
 Lv &\leq-t_{2}C_{23}=-2\delta'C_{23}.
 \end{aligned}
 \end{equation*}
 Let
 $$\delta'\leq \frac{C_{19}}{2(1+\tilde{R}_{0}^{2})}.$$
 Then the proof is completed. \end{proof}

By  making use of Lemma \ref{t2.3} (i), we find that it can't   be all negative for $\kappa$. Therefore, we can present
\begin{rem}\label{t5.2a}
By the assumption,  $\lambda=(\lambda_{1},\lambda_{2},\cdots,\lambda_{n})$ satisfy $ \lambda_{1}\geq \lambda_{2}\geq \cdots \geq \lambda_{n}$
and $ \lambda_{1}\geq \lambda_{2}\geq \cdots \geq \lambda_{n-1}\geq 0$.
We claim that  $ \lambda_{n}< -R_{0}$ is not satisfied.
\end{rem}

In order to estimate $|u_{\alpha n}(0)|$, we   introduce the motivation for the vector field $T_{\alpha}$ by
\begin{equation*}
T_{\alpha}=\frac{\partial}{\partial x_{\alpha}}+\sum_{\beta< n}\rho_{\alpha\beta}(0)\big(x_{\beta}\frac{\partial}{\partial x_{n}}-x_{n}\frac{\partial}{\partial x_{\beta}}\big),\quad \alpha\in\{1,2,\cdots,k-1\}.
 \end{equation*}
Furthermore, it follows immediately from (\ref{e5.1}) that for $\alpha<n$, on $\partial\Omega$ near $0$ we have see
 \begin{equation*}
T_{\alpha}=[\frac{\partial}{\partial x_{\alpha}}+\frac{\partial \rho}{\partial x_{\alpha}}\frac{\partial}{\partial x_{n}}]+\mathrm{O}(|x'|^{2})\frac{\partial}{\partial x_{n}}-\sum_{\beta< n}\rho_{\alpha\beta}(0)\rho(x')\frac{\partial}{\partial x_{\beta}}.
 \end{equation*}
 Now, we can show that
 \begin{lemma} \label{t5.3}
 The function $u-\underline{u}$ satisfies the following estimates
 \begin{equation}\label{e5.11}
|T_{\alpha}(u-\underline{u})|\leq C_{24}|x'|^{2},\quad \mathrm{on}\quad\partial (\Omega\cap B_{\delta'}(0)).
\end{equation}
 \begin{equation}\label{e5.11}
|LT_{\alpha}(u-\underline{u})|\leq  C_{25}(1+\sum_{i=1}^{n}\widetilde{G}_{ii})\leq C_{26}, \quad \mathrm{in} \quad\Omega\cap B_{\delta'}(0).
  \end{equation}

 \end{lemma}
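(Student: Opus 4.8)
The plan is to estimate $T_\alpha(u-\underline u)$ and $LT_\alpha(u-\underline u)$ separately, using the structure of the vector field $T_\alpha$ and the differential inequalities already established.

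For the boundary estimate \eqref{e5.11}, I would argue as follows. On $\partial\Omega$ near $0$ we have $u=\underline u=\varphi$, so $(u-\underline u)(x',\rho(x'))=0$ identically in $x'$. Differentiating this identity in $x_\alpha$ gives $(u-\underline u)_\alpha + (u-\underline u)_n \rho_{x_\alpha}=0$ on $\partial\Omega$. Now using the representation of $T_\alpha$ on $\partial\Omega$, namely $T_\alpha = \bigl[\frac{\partial}{\partial x_\alpha}+\frac{\partial\rho}{\partial x_\alpha}\frac{\partial}{\partial x_n}\bigr]+\mathrm{O}(|x'|^2)\frac{\partial}{\partial x_n}-\sum_{\beta<n}\rho_{\alpha\beta}(0)\rho(x')\frac{\partial}{\partial x_\beta}$, the leading bracket applied to $u-\underline u$ vanishes by the differentiated identity; the remaining terms carry explicit factors $\mathrm{O}(|x'|^2)$ and $\rho(x')=\mathrm{O}(|x'|^2)$, while $D(u-\underline u)$ is bounded by the gradient estimate of Lemma \ref{t3.4}. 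This yields $|T_\alpha(u-\underline u)|\le C_{24}|x'|^2$ on $\partial(\Omega\cap B_{\delta'}(0))$ after absorbing the constants.

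For the interior differential estimate, the key observation is that $T_\alpha$ is, up to the lower-order tangential piece, a combination of the first-order operator $\partial_{x_\alpha}$ and the rotation vector field $x_\beta\partial_{x_n}-x_n\partial_{x_\beta}$. Differentiating the equation \eqref{e5.3}, i.e. $\widetilde G(D^2u,Du)=\psi(x)$, in the direction $\partial_{x_\alpha}$ gives $\widetilde G_{ij}(u_\alpha)_{ij}+\widetilde G_i(u_\alpha)_i=\psi_\alpha$, that is $L(u_\alpha)=\psi_\alpha$, and similarly for $\underline u$ one gets (using that $\underline u$ is a subsolution and the structure of $\widetilde G$) a controlled expression for $L(\underline u_\alpha)$; the rotation vector field applied to the equation produces, after using that $\widetilde G$ and $\psi$ are smooth and that $[\widetilde G_{ij}]$, $\widetilde G_i$ are as in \eqref{e5.6}, \eqref{e5.6a}, \eqref{e5.8}, only terms bounded by $C(1+\sum_i\widetilde G_{ii})$ plus terms involving $\sum_i g_i|\kappa_i|$. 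To control the latter, I would invoke Lemma \ref{t2.3}: since $|\kappa_n|$ is bounded and $g_i\kappa_i = Ae^{-AF(\kappa)}\frac{\kappa_i}{1+\kappa_i^2}$ is bounded for each $i$ (the function $t\mapsto t/(1+t^2)$ is bounded), we get $\sum_i g_i|\kappa_i|\le C$ uniformly. Hence every term in $LT_\alpha(u-\underline u)$ is dominated by $C_{25}(1+\sum_{i=1}^n\widetilde G_{ii})$, and then by \eqref{e5.8} this is $\le C_{26}$.

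The main obstacle I anticipate is the bookkeeping in the second estimate: one must carefully commute the first-order operator $T_\alpha$ past the fully nonlinear operator $\widetilde G(D^2u,Du)$, keeping track of the terms where $T_\alpha$ differentiates the coefficients $x_\beta$ and $x_n$ in the rotation part (these produce $\widetilde G_{ij}$ times Hessian entries contracted against $\mathrm{O}(1)$ coefficients, hence the appearance of $\sum_i\widetilde G_{ii}$ and $\sum_i\widetilde G_i$), and of the second-derivative terms $u_{ij}$, $\underline u_{ij}$ that arise when two derivatives of the rotation field hit $u$. Showing that none of these terms involves $|D^2u|$ in an uncontrolled way — i.e. that the only Hessian dependence enters through the already-bounded quantities $\sum g_i\kappa_i$, $\sum g_i|\kappa_i|$ and $\sum\widetilde G_{ii}$ — is precisely where the algebraic identities \eqref{e5.4}–\eqref{e5.8} and part (i) of Lemma \ref{t2.3} must be used with care.
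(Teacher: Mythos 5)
Your argument is correct, but for the second estimate it takes a different route from the paper. For the boundary estimate you do exactly what the paper intends (the paper only says it ``follows from the $C^{1}$ estimate''): the bracket $\partial_{\alpha}+\rho_{x_{\alpha}}\partial_{n}$ is tangential, so it annihilates $u-\underline{u}$ on $\partial\Omega$, and the remaining pieces of $T_{\alpha}$ carry $\mathrm{O}(|x'|^{2})$ coefficients against a bounded gradient. For the interior estimate, the paper avoids all commutator bookkeeping by using that $G$ depends only on the principal curvatures of the graph, hence $\widetilde{G}(r,p)$ is invariant under simultaneous rotation of $r$ and $p$; applying the rotation generator $X_{\beta}$ to the equation therefore gives the exact identity $L(X_{\beta}u)=X_{\beta}\psi$, so $L(T_{\alpha}u)=T_{\alpha}\psi$ with no leftover terms, and only $|LT_{\alpha}\underline{u}|\leq C(1+\sum_{i}\widetilde{G}_{ii})$ remains to be checked. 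You instead commute $X_{\beta}$ through $\widetilde{G}_{ij}D_iD_j+\widetilde{G}_iD_i$ by hand; the leftover terms are of the form $\widetilde{G}_{\beta j}u_{nj}-\widetilde{G}_{nj}u_{\beta j}$ and $\widetilde{G}_{\beta}u_{n}-\widetilde{G}_{n}u_{\beta}$, and your mechanism for controlling them is the right one: by (\ref{e5.4}) and (\ref{e2.3}) one has $\widetilde{G}_{ij}u_{jm}=b^{ik}G^{kl}a_{lq}b_{qm}$, whose norm is controlled by $\max_i g_i|\kappa_i|\leq Ae^{-AF}/2$ together with the gradient bound, and the first-order piece is bounded by (\ref{e5.6a}). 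So both proofs land on the same final bound; the paper's invariance argument is cleaner (the extra terms cancel identically), while yours is more computational but does not require recognizing the geometric invariance. Two minor remarks: the subsolution property of $\underline{u}$ plays no role in bounding $L(T_{\alpha}\underline{u})$ --- only $\|\underline{u}\|_{C^{3}}$ and the boundedness of $\widetilde{G}_{ij},\widetilde{G}_{i}$ are used; and on the spherical portion $\Omega\cap\partial B_{\delta'}(0)$ of $\partial(\Omega\cap B_{\delta'}(0))$ one only has a plain bound $|T_{\alpha}(u-\underline{u})|\leq C$, which is what the barrier term $\widehat{B}|x|^{2}$ in Lemma \ref{t5.4} actually absorbs there.
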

 \begin{proof}
 Since $u=\underline{u}$ on $\partial \Omega$, then the first inequality follows directly from $C^{1}$ estimate of Lemma \ref{t3.4}. In the following we prove the last inequality. Since $G(\kappa)$ is invariant under rotation, it follows that for  the operator  $$X_{\beta}=x_{\beta}\frac{\partial}{\partial x_{n}}-x_{n}\frac{\partial}{\partial x_{\beta}},$$ which is the infinitesimal generator of a rotation. We can apply  the operator $X_{\beta}$ to the equation $\widetilde{G}(D^{2}u, Du)=\psi$. Consequently we have $$X_{\beta}\psi=\widetilde{G}_{ij}D_{i}D_{j}(X_{\beta} u)+\widetilde{G}_{i}D_{i}(X_{\beta} u),$$ from which it follows that $$T_{\alpha}\psi=L(T_{\alpha}u).$$ Since  $\widetilde{G}_{ii}$ is bounded, we have
\begin{equation*}
 \begin{aligned}
|LT_{\alpha}(u-\underline{u})|&\leq|LT_{\alpha}u|+|LT_{\alpha}\underline{u}|\\
&=|T_{\alpha}\psi|+|LT_{\alpha}\underline{u}|\\
&\leq C_{25}\Big(1+\sum_{i}\widetilde{G}_{ii}\Big)\\
&\leq C_{26},\quad \mathrm{in}\quad \Omega\cap B_{\delta'}(0).
\end{aligned}
\end{equation*}
 \end{proof}
 The function $\wp$ introduced in the next Lemma will be the main part of a barrier function which we shall construct in the following.
 \begin{lemma}\label{t5.4}
 There exist two positive constants $\widehat{A}\gg \widehat{B}\gg 1$ such that
 \begin{equation*}
 \wp=\widehat{A}v+\widehat{B}|x|^{2}\pm T_{\alpha}(u-\underline{u})
 \end{equation*}
 satisfies the inequalities
  \begin{equation*}
\begin{aligned}
L\wp&\leq 0,\quad \mathrm{in}\quad \Omega\cap B_{\delta'}(0),\\
\wp&\geq0,\quad \mathrm{on}\quad \partial(\Omega\cap B_{\delta'}(0)),
\end{aligned}
\end{equation*}
where $\delta'$ is a constant from Corollary \ref{t5.2}.
 \end{lemma}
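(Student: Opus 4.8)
The plan is to verify the two inequalities for $\wp = \widehat{A}v + \widehat{B}|x|^2 \pm T_\alpha(u-\underline{u})$ by choosing the constants $\widehat{A}$ and $\widehat{B}$ in the right order, exploiting the strictly negative supersolution property of $v$ from Lemma \ref{t5.1} to absorb all the lower-order contributions. First I would treat the boundary inequality $\wp \geq 0$ on $\partial(\Omega \cap B_{\delta'}(0))$: on the spherical part $|x| = \delta'$ the term $\widehat{B}|x|^2 = \widehat{B}\delta'^2$ is a fixed positive quantity, while $v \geq 0$ there (Lemma \ref{t5.1}) and $|T_\alpha(u-\underline{u})| \leq C_{24}|x'|^2 \leq C_{24}\delta'^2$ by Lemma \ref{t5.3}; so choosing $\widehat{B} \geq C_{24}$ already gives $\wp \geq 0$. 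On the flat part $\partial\Omega \cap B_{\delta'}(0)$ we have $d = 0$, hence $v = t_1(u-\underline{u}) = 0$ there since $u = \underline{u}$ on $\partial\Omega$, and again $\widehat{B}|x|^2 \geq \widehat{B}|x'|^2 \geq |T_\alpha(u-\underline{u})|$ for $\widehat{B} \geq C_{24}$, using that on $\partial\Omega$ near $0$ one has $x_n = \mathrm{O}(|x'|^2)$ so $|x|^2$ and $|x'|^2$ are comparable up to higher order. So the boundary condition fixes $\widehat{B}$ in terms of $C_{24}$ alone.

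Next I would handle the interior inequality $L\wp \leq 0$ in $\Omega \cap B_{\delta'}(0)$. Expanding, $L\wp = \widehat{A}\,Lv + \widehat{B}\,L(|x|^2) \pm L T_\alpha(u-\underline{u})$. By Lemma \ref{t5.1}, $Lv \leq -\epsilon_2 < 0$. For the second term, $L(|x|^2) = \widetilde{G}_{ij}D_iD_j(|x|^2) + \widetilde{G}_i D_i(|x|^2) = 2\sum_i \widetilde{G}_{ii} + 2\widetilde{G}_i x_i$, which by \eqref{e5.8}, \eqref{e5.6a} together with Lemma \ref{t4.2.1} (controlling $\sum g_i|\kappa_i|$ via $\sum g_i \kappa_i^2$ and Cauchy--Schwarz, or directly since $|\kappa_n|\le C_\delta$ and the large eigenvalues satisfy $g_i\kappa_i\le g_i\kappa_i^2$) is bounded by a constant $C_{27}$; note $|x|$ is bounded on $B_{\delta'}(0)$. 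For the third term, Lemma \ref{t5.3} gives $|LT_\alpha(u-\underline{u})| \leq C_{26}$. Thus $L\wp \leq -\widehat{A}\epsilon_2 + \widehat{B}C_{27} + C_{26}$, and since $\widehat{B}$ (hence $\widehat{B}C_{27}$) and $C_{26}$ are already fixed, choosing $\widehat{A}$ sufficiently large — so that $\widehat{A}\epsilon_2 \geq \widehat{B}C_{27} + C_{26}$, which is compatible with the requirement $\widehat{A} \gg \widehat{B} \gg 1$ — yields $L\wp \leq 0$.

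The one genuine subtlety, and the step I expect to require the most care, is that Lemma \ref{t5.1} provides $Lv \leq -\epsilon_2$ \emph{unconditionally} on $\Omega \cap B_{\delta'}(0)$ (its proof already split into the cases $|\lambda| \leq R_0$ and $|\lambda| \geq R_0$ and produced a negative bound in both), so I must make sure I invoke the full statement of Lemma \ref{t5.1} rather than the conditional Corollary \ref{t5.2}; otherwise in the regime $|\lambda| \leq R_0$ I would only have $L(u-\underline{u})$ bounded, not signed, and would need the $-d^2$ term in $v$ (which contributes the strictly negative $-2\widetilde{G}_{ij}D_idD_jd$ that Lemma \ref{t5.1} exploits) to close the argument. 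The other point to watch is that all the ``bounded by a constant'' claims for $\widetilde{G}_i$ and $\sum \widetilde{G}_{ii}$ rest on the already-established gradient bound (Lemma \ref{t3.4}) and on $|\kappa_n| \leq C_\delta$ from Lemma \ref{t2.3}, so these constants are uniform and independent of the solution, which is what makes the final choice of $\widehat{A}, \widehat{B}$ legitimate.
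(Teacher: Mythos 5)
Your proposal is correct and follows essentially the same route as the paper: use $v\geq 0$ (Lemma \ref{t5.1}) plus the bound $|T_{\alpha}(u-\underline{u})|\leq C_{24}|x'|^{2}$ from Lemma \ref{t5.3} to fix $\widehat{B}$ for the boundary inequality, then expand $L\wp=\widehat{A}Lv+\widehat{B}(2\sum_i\widetilde{G}_{ii}+2\widetilde{G}_{i}x_{i})\pm LT_{\alpha}(u-\underline{u})\leq -\widehat{A}\epsilon_{2}+\widehat{B}C_{27}+C_{26}$ and take $\widehat{A}$ large. Your observation that one must invoke the unconditional estimate $Lv\leq-\epsilon_{2}$ of Lemma \ref{t5.1} rather than the conditional Corollary \ref{t5.2} is exactly how the paper argues, so there is nothing to add.
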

 \begin{proof}
 First the condition $\wp\geq0$ on $\partial(\Omega\cap B_{\delta'}(0))$ follows if
 \begin{equation*}
 \widehat{B}|x|^{2}\pm T_{\alpha}(u-\underline{u})\geq0,\quad \mathrm{on}\quad\partial(\Omega\cap B_{\delta'}(0)).
 \end{equation*}
 In view of Lemma \ref{t5.3}, this can be arranged by choosing $\widehat{B}$ sufficiently large. By making use of  Lemma \ref{t5.1}  and Lemma  \ref{t5.3},  we can see that the property $L\wp\leq0$ now follows from the inequality
 \begin{equation*}
 \begin{aligned}
 L\wp&=\widehat{A}Lv+\widehat{B}L(|x|^{2})\pm LT_{\alpha} (u-\underline{u})\\
 &\leq- \widehat{A}\varepsilon_{2}+\widehat{B}(2\widetilde{G}_{ii}+2\widetilde{G}_{i}x_{i})+ C_{26}\\
 &\leq- \widehat{A}\varepsilon_{2}+\widehat{B}C_{27}+ C_{26}\\
 \end{aligned}
 \end{equation*}
 with holds if $\widehat{A}$ is  sufficiently large.
 \end{proof}
 The maximum principle applied to Lemma \ref{t5.4} shows that $\wp\geq0$ in $\Omega\cap B_{\delta'}(0)$. Since $\Big(\widehat{A}v+\widehat{B}|x|^{2}\pm T_{\alpha}(u-\underline{u})\Big)(0)=0$, it follows that
  \begin{equation*}
\frac{\partial}{\partial x_{n}}\big(\widehat{A}v+\widehat{B}|x|^{2}\pm T_{\alpha}(u-\underline{u})\big)(0)\geq 0.
  \end{equation*}
  Thus we obtain
  \begin{equation*}
  |\frac{\partial}{\partial x_{n}}(T_{\alpha}(u-\underline{u}))(0)|\leq|\frac{\partial}{\partial x_{n}}(\widehat{A}v+\widehat{B}|x|^{2})(0)|,
 \end{equation*}
 and this  gives
\begin{equation*}
|u_{\alpha n}(0)-\underline{u}_{\alpha n}(0)|\leq |\widehat{A}u_{n}(0)|+|\sum_{\beta< n}\rho_{\alpha\beta}(0)(u-\underline{u})_{\beta}(0)|\leq C_{28}.
  \end{equation*}
 Hence we can obtain the mixed second-order derivative bound $|u_{\alpha n}(0)|\leq C_{29}$ for all $\alpha<n$.\par
Finally, estimating the remaining second derivative $|u_{nn}|$ of the function $u$ is somewhat complicated.  We find that the  principal curvatures admit a uniform
 lower bound, i.e. $\mathcal{A}=[a_{ij}]\geq- C_{30}$, since the Lemma \ref{t2.3} and $F(\kappa)=\sum_{i}\arctan \kappa_{i}\geq(n-2)\frac{\pi}{2}+\delta$ in $\overline{\Omega}$.  Recall that $a_{ij}=\frac{1}{w}b^{ik}u_{kl}b^{lj}$ and the gradient estimate, we can obtain that  $D^{2}u\geq-C_{31}$. It suffices to
derive an upper bound
\begin{equation*}
u_{nn} \leq C_{32}, \quad \mathrm{on} \quad \partial\Omega.
\end{equation*}
 For discussion purposes, we need to give some definition of   the operator  as follows. Assume that the  $(n-1)\times(n-1)$  unper block matrix $u_{\alpha\beta}$ is diagonal. Before we deal with $D^{2}u$,  using the boundedness  of $Du$,
we consider $\widetilde{G}$ and  $\widehat{G}$ as functions through taking  $D^{2}u$ as variable all alone.   In other word, we can define that
$$\widetilde{G}[u_{ij}]:=\widetilde{G}(D^{2}u,p),$$
and
$$\widehat{G}[u_{\alpha\beta}]=\lim_{\mathrm{t}\rightarrow\infty }\widetilde{G}(D^{2}u+\mathrm{t}q\otimes q, p),$$
where  $q=(0,\ldots, 0,1)$ and $p$  represents for gradient variables.\par

Similar to Lemma \ref{t3.1}, we can deduce that
\begin{Corollary}\label{t5.5}
The operator $\widehat{G}[u_{\alpha\beta}]$ is concave on the set $u_{\alpha\beta}(\partial\Omega)$.
\end{Corollary}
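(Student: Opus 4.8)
The plan is to transport the concavity of $G$ established in Lemma~\ref{t3.3} through the two operations that define $\widehat G$: the linear substitution $D^{2}u\mapsto\mathcal A$ and the passage $t\to\infty$. Throughout, the gradient $p$ is frozen (it is a priori bounded by Lemma~\ref{t3.4}), so that $w$ and $[b^{ij}]$ are constants. By \eqref{e2.3} and \eqref{e5.4} the map sending a symmetric matrix $[r_{ij}]$ to $\mathcal A=\frac1w[b^{ik}r_{kl}b^{lj}]$ is \emph{linear}; hence $\widetilde G(\,\cdot\,,p)=G\circ\mathcal A$ is the composition of the concave operator $G$ with a linear map, and is therefore concave in $[r_{ij}]$ on the convex set $\Gamma$. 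Restricting to the affine slice in which $r_{\alpha n}$ and $r_{nn}$ are held fixed, so that only the tangential block varies, and allowing in addition the affine shift $[r_{ij}]\mapsto[r_{ij}]+tq\otimes q$ (which merely displaces the frozen entry $r_{nn}$), I conclude that for every fixed $t>0$ the function $\widetilde G_{t}[u_{\alpha\beta}]\triangleq\widetilde G(D^{2}u+tq\otimes q,\,p)$, with $u_{\alpha n}$ and $u_{nn}$ held fixed, is concave in the tangential block $[u_{\alpha\beta}]$ on $u_{\alpha\beta}(\partial\Omega)$.

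Next I would verify that the defining limit exists and is a genuine function of that tangential block. Writing $\mathcal A(t)=\mathcal A_{0}+\tfrac tw\,\vec b\,\vec b^{\,T}$ with $\vec b=[b^{ij}]q$ (a vector depending only on $p$), the added term is positive semidefinite, so the ordered principal curvatures of $\mathcal A(t)$ are nondecreasing in $t$; since $F(\mathcal A(t))=\sum_{i}\arctan\kappa_{i}(t)<n\frac\pi2$, this sum increases to a finite limit, and hence $\widetilde G_{t}[u_{\alpha\beta}]=-e^{-AF(\mathcal A(t))}$ increases to the finite value $\widehat G[u_{\alpha\beta}]$. Moreover one curvature escapes to $+\infty$, contributing $\tfrac\pi2$ to $F$, while the remaining $n-1$ converge to the eigenvalues of the compression of $\mathcal A_{0}$ to $\vec b^{\perp}$; since $[b^{ij}]$ is symmetric, $\xi\perp\vec b\iff[b^{ij}]\xi\perp q$, so this compression is a quadratic form into which $D^{2}u$ enters only through the entries $u_{\alpha\beta}$, $\alpha,\beta<n$ — which justifies the notation $\widehat G=\widehat G[u_{\alpha\beta}]$.

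Finally, concavity passes to the limit of a convergent family of concave functions: for $[u_{\alpha\beta}],[u'_{\alpha\beta}]\in u_{\alpha\beta}(\partial\Omega)$ and $\theta\in[0,1]$,
\[
\widehat G\bigl[\theta u_{\alpha\beta}+(1-\theta)u'_{\alpha\beta}\bigr]
=\lim_{t\to\infty}\widetilde G_{t}\bigl[\theta u_{\alpha\beta}+(1-\theta)u'_{\alpha\beta}\bigr]
\ge\lim_{t\to\infty}\Bigl(\theta\,\widetilde G_{t}[u_{\alpha\beta}]+(1-\theta)\,\widetilde G_{t}[u'_{\alpha\beta}]\Bigr)
=\theta\,\widehat G[u_{\alpha\beta}]+(1-\theta)\,\widehat G[u'_{\alpha\beta}],
\]
using the concavity of each $\widetilde G_{t}$ from the first paragraph. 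Hence $\widehat G[u_{\alpha\beta}]$ is concave on $u_{\alpha\beta}(\partial\Omega)$.

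I expect the only delicate point to be the middle step: one must ensure $\mathcal A(t)$ remains inside the admissible set $\Gamma$ for all large $t$ (immediate from the monotonicity of $F$ under the positive semidefinite shift together with $F(\mathcal A_{0})=h(x)\ge(n-2)\frac\pi2+\delta$, which holds by \eqref{e1.3} and hypothesis (A)), and that the limit truly collapses to a function of $[u_{\alpha\beta}]$ alone. Once these are settled, the concavity itself is the soft principle that composition with a linear map and pointwise limits both preserve concavity.
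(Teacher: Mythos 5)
Your proposal is correct and follows essentially the route the paper intends: the paper offers no written proof of Corollary \ref{t5.5}, merely asserting it ``similar to'' the concavity lemma for $G$ (Lemma \ref{t3.3}), and your argument is exactly that intended one — concavity of $G$ composed with the linear map $r\mapsto\mathcal{A}$ for frozen gradient, monotone convergence of $\widetilde G(D^{2}u+tq\otimes q,p)$ as $t\to\infty$, identification of the limit as a function of the tangential block via the rank-one perturbation/compression, and preservation of concavity under pointwise limits. Your extra care about the limit existing, the admissible set $\Gamma$, and the dependence on $u_{\alpha\beta}$ alone supplies details the paper leaves implicit (and which its Lemma \ref{t5.7}--\ref{t5.8} discussion relies on).
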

 We need to
 consider
$$\Theta =\min _{x \in \partial\Omega }\widehat{G}[u_{\alpha\beta}(x)]-\psi(x)$$
 and
$$\mathbf{c}=\min_{x\in\partial\Omega}(\widehat{G}[\underline{u}_{\alpha\beta}]-\widetilde{G}[\underline{u}_{ij}])$$
that demonstrate $u_{nn} \leq C_{32}$ on $ \partial\Omega$.
It's obvious that $ \mathbf{c}>0$.
 According to an idea of Trudinger \cite{TN},
 we  use the following Lemma that implies the boundary $C^{2}$ estimate.
   \begin{lemma}\label{t5.6}
   There exist some positive constant $\omega_{0}>0$  depending only on the known data, such that  \begin{equation}\label{e5.14a}
\Theta\geq\omega_{0}>0.
\end{equation}
   \end{lemma}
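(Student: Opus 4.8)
The plan is to run Trudinger's maximum–principle argument at the point realizing the minimum $\Theta$. Let $x_{0}\in\partial\Omega$ satisfy $\widehat{G}[u_{\alpha\beta}(x_{0})]-\psi(x_{0})=\Theta$; after a rigid motion take $x_{0}$ to be the origin, the $x_{n}$-axis the inner normal, and (rotating the tangential frame) $[u_{\alpha\beta}(0)]_{\alpha,\beta<n}$ diagonal. The object is a positive lower bound, depending only on the known data, for $\widehat{G}[u_{\alpha\beta}(0)]-\psi(0)$.

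First I would reduce everything to a boundary estimate for $u-\underline{u}$ by combining concavity with the subsolution. By Corollary \ref{t5.5}, $\widehat{G}$ is concave on $u_{\alpha\beta}(\partial\Omega)$, so at $0$
\begin{equation*}
\widehat{G}[u_{\alpha\beta}(0)]-\psi(0)\;\geq\;\big(\widehat{G}[\underline{u}_{\alpha\beta}(0)]-\widetilde{G}[\underline{u}_{ij}(0)]\big)+\big(\widetilde{G}[\underline{u}_{ij}(0)]-\psi(0)\big)+\sum_{\alpha,\beta<n}\widehat{G}^{\alpha\beta}\big(u_{\alpha\beta}(0)-\underline{u}_{\alpha\beta}(0)\big).
\end{equation*}
The first bracket is $\geq\mathbf{c}>0$ by the definition of $\mathbf{c}$, and the second is $\geq0$ because the subsolution inequality $\sum\arctan\underline{\kappa}_{i}\geq h$ says exactly $G(\underline{\mathcal{A}})\geq\psi$, the gradient-variable discrepancy being absorbed by the structure conditions of Corollary \ref{c3.5}. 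Differentiating the boundary identity $(u-\underline{u})(x',\rho(x'))=0$ twice gives $(u-\underline{u})_{\alpha\beta}(0)=-(u-\underline{u})_{n}(0)\rho_{\alpha\beta}(0)$, while $(u-\underline{u})_{n}(0)>0$ by the Hopf lemma (Lemma \ref{t3.1a}) and $(u-\underline{u})_{n}(0)\leq C$ by the gradient bound (Lemma \ref{t3.4}). Hence
\begin{equation*}
\Theta\;\geq\;\mathbf{c}-(u-\underline{u})_{n}(0)\sum_{\alpha,\beta<n}\widehat{G}^{\alpha\beta}\rho_{\alpha\beta}(0),
\end{equation*}
and everything comes down to showing the defect $(u-\underline{u})_{n}(0)\sum\widehat{G}^{\alpha\beta}\rho_{\alpha\beta}(0)$ does not exceed $\tfrac{1}{2}\mathbf{c}$.

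To control that defect I would compare, in $\Omega\cap B_{\delta'}(0)$, the extension $\Phi$ of $\widehat{G}[u_{\alpha\beta}(\cdot)]-\psi(\cdot)$ obtained with a frame tangent to $\partial\Omega$ near $x_{0}$ (so $\Phi(0)=\Theta$ and $\Phi\geq\Theta$ on $\partial\Omega\cap B_{\delta'}(0)$) against a barrier of the kind built in Lemmas \ref{t5.1} and \ref{t5.4}. Using the concavity of $\widehat{G}$, the boundary bounds $|u_{\alpha\beta}(0)|\leq C_{11}$ and $|u_{\alpha n}(0)|\leq C_{29}$, and the estimates $\sum_{i}\widetilde{G}_{ii}\leq C_{15}$ and $\sum_{i}|\widetilde{G}_{i}|\leq C_{12}\sum g_{i}|\kappa_{i}|$ from (\ref{e5.8}) and (\ref{e5.6a}) (which also absorb the frame-connection terms produced by differentiating the moving frame), one obtains a lower bound $L\Phi\geq -C$ on $\Omega\cap B_{\delta'}(0)$. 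Then, since the barrier $v=t_{1}(u-\underline{u})+t_{2}d-d^{2}$ satisfies $Lv\leq-\epsilon_{2}<0$ and $v\geq0$ on $\partial(\Omega\cap B_{\delta'}(0))$, the maximum principle applied to $\Phi-\Theta$ minus a large multiple of $v$ controls the inward normal derivative of $u-\underline{u}$ at $0$, hence the defect, by known data; calibrating $\delta',t_{1},t_{2}$ (with the help of $L(u-\underline{u})\leq-C_{16}$ from Corollary \ref{t5.2}) absorbs this into $\tfrac{1}{2}\mathbf{c}$, and $\Theta\geq\omega_{0}:=\tfrac{1}{2}\mathbf{c}>0$ follows.

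The main obstacle is this last step: one must extend the purely tangential operator value off $\partial\Omega$ compatibly with the linearized operator $L$, bound $L\Phi$ from below using concavity and the tangential/mixed boundary bounds without generating an uncontrolled term — precisely where $C_{11}$, $C_{29}$, $C_{15}$ and $C_{16}$ must be seen to enter with the right signs — and calibrate the barrier constants so that the subsolution gap $\mathbf{c}$ is not destroyed. By contrast, the reduction in the two displays above is purely algebraic, and the passage from Lemma \ref{t5.6} to the bound $u_{nn}\leq C_{32}$ on $\partial\Omega$ is immediate, since $\widetilde{G}$ is strictly increasing in $r_{nn}$ and $\widetilde{G}(D^{2}u(0)+tq\otimes q,Du(0))\to\widehat{G}[u_{\alpha\beta}(0)]$ as $t\to\infty$, so a positive gap $\widehat{G}[u_{\alpha\beta}(0)]-\psi(0)\geq\omega_{0}$ caps $u_{nn}(0)$ by a bound determined by $\omega_{0}$.
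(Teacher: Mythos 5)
Your opening reduction (concavity of $\widehat{G}$ plus the boundary identity $u_{\alpha\beta}-\underline{u}_{\alpha\beta}=-(u-\underline{u})_{n}\rho_{\alpha\beta}$, yielding $\Theta\geq \mathbf{c}-(u-\underline{u})_{n}(0)\,\widehat{G}_{0}^{\alpha\beta}\rho_{\alpha\beta}(0)$) is exactly the paper's (\ref{e5.15})--(\ref{e5.17}), and that part is fine. The gap is in what you propose next: you aim to prove that the defect $(u-\underline{u})_{n}(0)\,\widehat{G}_{0}^{\alpha\beta}\rho_{\alpha\beta}(0)$ never exceeds $\tfrac{1}{2}\mathbf{c}$, and this is not provable -- in general it is false. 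By the Hopf lemma (Lemma \ref{t3.1a}) the factor $(u-\underline{u})_{n}(0)$ is strictly positive and of fixed size determined by $u$ and $\underline{u}$; it is not a quantity a barrier can shrink. Meanwhile $\eta(0)=\widehat{G}_{0}^{\alpha\beta}\rho_{\alpha\beta}(0)$ is governed by the boundary curvature and may be large, and $\mathbf{c}$ may be small. A maximum-principle comparison of an interior extension of $\widehat{G}[u_{\alpha\beta}]-\psi$ against the barrier $v$ can only give one-sided bounds of controlled size on normal derivatives (which the gradient estimate already provides); there is no mechanism by which "calibrating $\delta',t_{1},t_{2}$" makes the product $\leq\tfrac{1}{2}\mathbf{c}$. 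Hence your conclusion $\Theta\geq\tfrac12\mathbf{c}$ does not follow.

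The paper's proof handles precisely the case you cannot dispose of by a genuinely different device: if $(u-\underline{u})_{n}(0)\,\eta(0)>\tfrac12\mathbf{c}$, then since $0<(u-\underline{u})_{n}\leq C$ one gets a positive lower bound $\eta\geq c_{\tau}$ near $0$, which makes the auxiliary function $\Phi=-(u-\underline{u})_{n}+\frac{1}{\eta}\widehat{G}_{0}^{\alpha\beta}\big(\underline{u}_{\alpha\beta}(x)-u_{\alpha\beta}(0)\big)-\frac{\psi(x)-\psi(0)}{\eta}$ admissible; one checks $\Phi(0)=0$, $\Phi\geq0$ on $\partial\Omega$, $L\Phi\leq C(1+\sum_{i}\widetilde{G}_{ii})$, and the barrier $\widehat{A}v+\widehat{B}|x|^{2}+\Phi$ gives $\Phi_{n}(0)\geq-C$, i.e.\ an upper bound on $u_{nn}(0)$ and so $|D^{2}u(0)|\leq C_{36}$ at the minimizing point. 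The positivity of $\Theta$ then comes not from the defect being small but from this Hessian bound: $\widehat{G}[u_{\alpha\beta}(0)]=\lim_{\mathrm{t}\to\infty}\widetilde{G}(D^{2}u(0)+\mathrm{t}q\otimes q,Du(0))$ exceeds $\widetilde{G}(D^{2}u(0),Du(0))=\psi(0)$ by an amount bounded below once $|D^{2}u(0)|$ and $|Du(0)|$ are bounded, so $\Theta\geq\omega_{0}$. This dichotomy -- small defect gives $\Theta\geq\tfrac12\mathbf{c}$ directly, large defect gives a normal-normal second derivative bound and then a quantitative gap -- is the essential idea missing from your proposal. (Also note the logical order in the paper: the bound on $u_{nn}(0)$ at the minimizing point is an intermediate step inside the proof of Lemma \ref{t5.6}, while the global estimate $\max_{\partial\Omega}|u_{nn}|\leq C_{38}$ is deduced afterwards, in Lemma \ref{t5.9}, via Lemmas \ref{t5.7} and \ref{t5.8}.)
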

  \begin{proof}
 Choose proper coordinates in $\mathbb{R}^{n}$ such that $\Theta$ is achieved at $0\in \partial\Omega$. By the Corollary \ref{t5.5}, there exists a symmetric $\{\widehat{G}_{0}^{\alpha\beta}\}$ such that \begin{equation}\label{e5.15}
 \widehat{G}_{0}^{\alpha\beta}\Big(u_{\alpha\beta}(x)-u_{\alpha\beta}(0)\Big)\geq \widehat{G}[u_{\alpha\beta}(x)]-\widehat{G}[u_{\alpha\beta}(0)],
 \end{equation}
where $ \widehat{G}_{0}^{\alpha\beta}=\frac{\partial \widehat{G}}{\partial u_{\alpha\beta}}(u_{\alpha\beta}(0))$.
Since (\ref{e5.1}) and (\ref{e5.2}),
by calculating we obtain
$$(u-\underline{u})_{\alpha}+(u-\underline{u})_{n}\rho_{\alpha}=0,$$
$$(u-\underline{u})_{\alpha \beta}+(u-\underline{u})_{\alpha n}\rho_{\beta}+(u-\underline{u})_{n\beta}\rho_{\alpha}+(u-\underline{u})_{n}\rho_{\alpha\beta}=0.$$
Since $\rho_{\alpha}=\rho_{\beta}=0$ on $\partial\Omega$, we have
\begin{equation}\label{e5.16}
u_{\alpha\beta}-\underline{u}_{\alpha\beta}=-(u-\underline{u})_{n}\rho_{\alpha\beta},   \quad \mathrm{on} \quad  \partial\Omega. \end{equation}
It follows that
\begin{equation}\label{e5.17}
\begin{aligned}
(u-\underline{u})_{n}(0)\widehat{G}_{0}^{\alpha\beta}\rho_{\alpha\beta}(0)&=\widehat{G}_{0}^{\alpha\beta}\Big(\underline{u}_{\alpha\beta}(0)-u_{\alpha\beta}(0)\Big)\\
&\geq\widehat{G}[\underline{u}_{\alpha\beta}(0)]-\widehat{G}[u_{\alpha\beta}(0)]\\
&=\widehat{G}[\underline{u}_{\alpha\beta}(0)]-\psi(0)-\Theta\\
&\geq\mathbf{c}-\Theta.
\end{aligned}
\end{equation}
Suppose  now that
$$(u-\underline{u})_{n}(0)\widehat{G}_{0}^{\alpha\beta}\rho_{\alpha\beta}(0)> \frac{1}{2}\mathbf{c}.$$
 Otherwise, taking (\ref{e5.17}) into consideration, we have $\Theta\geq \frac{1}{2}\mathbf{c}$. Hence we see that (\ref{e5.14a}) holds. \par
 Let $\eta:=\widehat{G}_{0}^{\alpha\beta}\rho_{\alpha\beta}$. By   Lemma \ref{t3.1a} we obtain  $(u-\underline{u})_{n}>0$. Taking (\ref{e3.7}) and $0< (u-\underline{u})_{n}\leq C_{\iota}$ into consideration,   there exists a universal constant $c_{\tau}>0$ depending on the $C^{1}$ estimate  and $\underline{u}$ such that
\begin{equation}\label{e5.18}
\eta(0)\geq\frac{1}{2}\mathbf{c}\frac{1}{(u-\underline{u})_{n}(0)}\geq 2 c_{\tau}>0,
\end{equation}
where$$c_{\iota}=\frac{1}{4}\frac{\mathbf{c}}{C_{\iota}}.$$
So we may assume that $\eta\geq c_{\tau} $ on $\Omega \cap B_{\delta'}(0)$ by  choosing $\delta'$ as in Lemma \ref{t5.1}.\par
Consider the function
$$\Phi=-(u-\underline{u})_{n}+\frac{1}{\eta}\widehat{G}_{0}^{\alpha\beta}\Big(\underline{u}_{\alpha\beta}(x)-u_{\alpha\beta}(0)\Big)-\frac{\psi(x)-\psi(0)}{\eta}, \quad \mathrm{in} \quad \Omega \cap B_{\delta'}(0).$$
We have $\Phi(0)=0$.  Using  (\ref{e5.15}) and (\ref{e5.16}),  we see that
\begin{equation}\label{e5.19}
\begin{aligned}
\Phi
&=\frac{1}{\eta}\Big\{-(u-\underline{u})_{n}\widehat{G}_{0}^{\alpha\beta}\rho_{\alpha\beta}+ \widehat{G}_{0}^{\alpha\beta}\big(\underline{u}_{\alpha\beta}(x)-u_{\alpha\beta}(0))\Big\}-\frac{1}{\eta}\Big(\psi(x)-\psi(0)\Big)\\
&=\frac{1}{\eta}\Big\{\widehat{G}_{0}^{\alpha\beta}(u_{\alpha\beta}(x)-\underline{u}_{\alpha\beta}(x)+\underline{u}_{\alpha\beta}(x)-u_{\alpha\beta}(0)\big)\Big\}-\frac{1}{\eta}\Big(\psi(x)-\psi(0)\Big)\\
&=\frac{1}{\eta}\Big\{\widehat{G}_{0}^{\alpha\beta}(u_{\alpha\beta}(x)-u_{\alpha\beta}(0))\Big\}-\frac{1}{\eta}\Big(\psi(x)-\psi(0)\Big)\\
&\geq \frac{1}{\eta}\Big\{\widehat{G}[u_{\alpha\beta}(x)]-\widehat{G}[u_{\alpha\beta}(0)]-(\psi(x)-\psi(0))\Big\}\\
&=\frac{1}{\eta}\Big\{\widehat{G}[u_{\alpha\beta}(x)]-\psi(x)-\Theta\Big\}\\
&\geq0, \quad \mathrm{on}\quad \partial\Omega.
\end{aligned}
\end{equation}
After direct computation, we have
\begin{equation*}
\begin{aligned}
D_{i}\Big[\frac{1}{\eta}\widehat{G}_{0}^{\alpha\beta}(\underline{u}_{\alpha\beta}(x)-u_{\alpha\beta}(0))\Big]&=\frac{1}{\eta}\widehat{G}_{0}^{\alpha\beta}\underline{u}_{\alpha\beta i}-\frac{\eta_{i}}{\eta^{2}}\widehat{G}_{0}^{\alpha\beta}(\underline{u}_{\alpha\beta}(x)-u_{\alpha\beta}(0)),\\
D_{i}D_{j}\Big[\frac{1}{\eta}\widehat{G}_{0}^{\alpha\beta}\big(\underline{u}_{\alpha\beta}(x)-u_{\alpha\beta}(0)\big)\Big]&=\frac{1}{\eta}\widehat{G}_{0}^{\alpha\beta}\underline{u}_{\alpha\beta ij}+D_{i}D_{j}(\frac{1}{\eta})\widehat{G}_{0}^{\alpha\beta}(\underline{u}_{\alpha\beta}(x)-u_{\alpha\beta}(0))-\frac{2}{\eta^{2}}\widehat{G}_{0}^{\alpha\beta}\underline{u}_{\alpha\beta i}\eta_{j}.
\end{aligned}
\end{equation*}
 Since $\widetilde{G}_{i}$ is  bounded and  $\psi$ is a bounded function, we can obtain that
\begin{equation*}
\begin{aligned}
L(\Phi)&=\widetilde{G}_{ij}D_{i}D_{j}\Phi+\widetilde{G}_{i}D_{i}\Phi\\
&=-\widetilde{G}_{ij}D_{i}D_{j}(u-\underline{u})_{n}-\widetilde{G}_{i}D_{i}(u-\underline{u})_{n}+\widetilde{G}_{ij}D_{i}D_{j}\Big[\frac{1}{\eta}\widehat{G}_{0}^{\alpha\beta}\big(\underline{u}_{\alpha\beta}(x)-u_{\alpha\beta}(0)\big)\Big]\\
~~&~+\widetilde{G}_{i}D_{i}\Big[\frac{1}{\eta}\widehat{G}_{0}^{\alpha\beta}(\underline{u}_{\alpha\beta}(x)-u_{\alpha\beta}(0))\Big]+\widetilde{G}_{ij}D_{i}D_{j}\frac{\psi(x)-\psi(0)}{\eta}+\widetilde{G}_{i}D_{i}\frac{\psi(x)-\psi(0)}{\eta}\\
&=-\psi_{n}+\widetilde{G}_{ij}\underline{u}_{nij}+\widetilde{G}_{i}\underline{u}_{ni}+\frac{1}{\eta}\widetilde{G}_{ij}\widehat{G}_{0}^{\alpha\beta}\underline{u}_{\alpha\beta ij}+D_{i}D_{j}(\frac{1}{\eta})\widetilde{G}_{ij}\widehat{G}_{0}^{\alpha\beta}(\underline{u}_{\alpha\beta}(x)-u_{\alpha\beta}(0))\\
~~&~-\frac{2}{\eta^{2}}\widetilde{G}_{ij}\widehat{G}_{0}^{\alpha\beta}\underline{u}_{\alpha\beta i}\eta_{j}+\frac{1}{\eta}\widetilde{G}_{i}\widehat{G}_{0}^{\alpha\beta}\underline{u}_{\alpha\beta i}-\frac{\eta_{i}}{\eta^{2}}\widetilde{G}_{i}\widehat{G}_{0}^{\alpha\beta}(\underline{u}_{\alpha\beta}(x)-u_{\alpha\beta}(0))\\
~~&~+\widetilde{G}_{ij}D_{i}D_{j}\frac{\psi(x)-\psi(0)}{\eta}+\widetilde{G}_{i}D_{i}\frac{\psi(x)-\psi(0)}{\eta}\\
&\leq C_{33}(1+\sum_{i}\widetilde{G}_{ii})\\
&\leq C_{34}(1+\sum_{i}G^{ii}).
\end{aligned}
\end{equation*}
 We can again use Lemma \ref{t5.1}, Lemma \ref{t5.4} and take $\widehat{A}\gg\widehat{B}\gg1$ large enough to get
\begin{equation}\label{e5.20}
\begin{aligned}
L(\widehat{A}v+\widehat{B}|x|^{2}+\Phi)&\leq0,\quad \mathrm{in}\quad \Omega\cap B_{\delta'}(0),\\
\widehat{A}v+\widehat{B}|x|^{2}+\Phi&\geq 0,\quad  \mathrm{on}\quad\partial(\Omega\cap B_{\delta'}(0)).
\end{aligned}
\end{equation}
 By the maximum principle, we see that $$\widehat{A}v+\widehat{B}|x|^{2}+\Phi\geq 0, \quad \mathrm{in}\quad \Omega\cap B_{\delta'}(0).$$We have
$$\widehat{A}v_{n}(0)+\Phi_{n}(0) = \frac{\partial}{\partial x_{n}}(\widehat{A}v + \widehat{B}|x|^{2}+\Phi)(0)\geq0,$$
since $\widehat{A}v+\widehat{B}|x|^{2}+\Phi= 0$ at the origin. Thus one can deduce that $\Phi_{n}(0)\geq -\widehat{A}v_{n}(0)\geq- C_{35}$.
Consequently we can conclude that $$u_{nn}(0)\leq C_{32}.$$ \par
So that we obtain  $|D^{2}u(0)|\leq C_{36}$ for some positive constant.
By letting $\mathrm{t}$ large enough, it yields that $[D^{2}u+\mathrm{t}q\otimes q]>[D^{2}u]$ uniformly.
Thus we see that $$\widehat{G}[u_{\alpha\beta}(0)]>\psi(0).$$
As a result, (\ref{e5.14a}) holds.
 \end{proof}
Now  we need   the following Lemma from \cite{CNS3}.
\begin{lemma}\label{t5.7}
Consider the $n\times n$ symmetric matrix
\begin{equation*}
M=\left(
\begin{matrix}
d_{1}&0&\cdots&0&a_{1}\\
0&d_{2}&\cdots&0&a_{2}\\
\vdots&\vdots&\ddots&\vdots&\vdots\\
0&0&\cdots&d_{n-1}&a_{n-1}\\
a_{1}&a_{2}&\cdots&a_{n-1}&a
\end{matrix}\right)
\end{equation*}
 with $d_{1}, \ldots, d_{n-1}$ fixed,  $a$ tending to infinity and $|a_{\alpha}|\leq C_{37}$ for $1\leq  \alpha\leq n-1$. Then the eigenvalues $\lambda_{i}$ of $M$ behave like
\begin{equation*}
\begin{aligned}
\lambda_{\alpha}&=d_{\alpha}+o(1),\quad 1\leq\alpha\leq n-1,\\
\lambda_{n}&=a\Big(1+ O(\frac{1}{a})\Big),
\end{aligned}
\end{equation*}
 for $1\leq i\leq n$, where the $o(1)$ and $O(\frac{1}{a})$ are uniform--depending only on  $d_{1}, \ldots, d_{n-1}$ and $C_{37}$.
\end{lemma}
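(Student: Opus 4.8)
The plan is to read off the asymptotics from the Cauchy interlacing inequalities for $M$ together with the two elementary trace identities $\operatorname{tr}M=a+\sum_{\alpha<n}d_\alpha$ and $\operatorname{tr}(M^{2})=a^{2}+\sum_{\alpha<n}d_\alpha^{2}+2\sum_{\alpha<n}a_\alpha^{2}$. This avoids any manipulation of the characteristic polynomial and in fact delivers the sharper rate $O(1/a)$ in place of $o(1)$. Write $\lambda_{1}(M)\ge\cdots\ge\lambda_{n}(M)$ for the eigenvalues of the symmetric matrix $M$ in decreasing order and $d_{(1)}\ge\cdots\ge d_{(n-1)}$ for $d_{1},\dots,d_{n-1}$ rearranged in decreasing order.

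First, since $\operatorname{diag}(d_{1},\dots,d_{n-1})$ is the principal submatrix of $M$ obtained by deleting the last row and column, the Cauchy interlacing theorem gives $\lambda_{k+1}(M)\le d_{(k)}\le\lambda_{k}(M)$ for $k=1,\dots,n-1$; in particular $\lambda_{j}(M)\le d_{(j-1)}$ for $j=2,\dots,n$. Next, testing the Rayleigh quotient on the last coordinate vector gives $\lambda_{1}(M)\ge M_{nn}=a$, while $\lambda_{1}(M)^{2}\le\operatorname{tr}(M^{2})\le a^{2}+C'$ with $C'=\sum_{\alpha<n}d_\alpha^{2}+2(n-1)C_{37}^{2}$, using $|a_\alpha|\le C_{37}$. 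Hence $a\le\lambda_{1}(M)\le\sqrt{a^{2}+C'}$, so $\lambda_{1}(M)=a\,(1+O(1/a^{2}))=a\,(1+O(1/a))$; in particular $\lambda_{1}(M)-a=O(1/a)$, so for $a$ large this is the unique large eigenvalue.

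For the remaining eigenvalues, set $S:=\sum_{j=2}^{n}\lambda_{j}(M)=\operatorname{tr}M-\lambda_{1}(M)=\sum_{\alpha<n}d_\alpha+O(1/a)$ by the previous paragraph, while summing the interlacing bounds $\lambda_{j}(M)\le d_{(j-1)}$ over $j=2,\dots,n$ gives $S\le\sum_{j=2}^{n}d_{(j-1)}=\sum_{\alpha<n}d_\alpha$. Thus the nonpositive numbers $\lambda_{j}(M)-d_{(j-1)}$, $j=2,\dots,n$, have sum $S-\sum_{\alpha<n}d_\alpha\ge-C'/(2a)$, and since each summand is nonpositive each has absolute value at most $C'/(2a)$. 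Therefore the multiset $\{\lambda_{2}(M),\dots,\lambda_{n}(M)\}$ lies within $O(1/a)$ of $\{d_{(1)},\dots,d_{(n-1)}\}=\{d_{1},\dots,d_{n-1}\}$; assigning to each $d_\alpha$ the matching eigenvalue (arbitrarily when several $d_\alpha$ agree) and calling it $\lambda_\alpha$, and putting $\lambda_{n}:=\lambda_{1}(M)$, one obtains $\lambda_\alpha=d_\alpha+O(1/a)$ for $\alpha\le n-1$ and $\lambda_{n}=a\,(1+O(1/a))$, with all constants depending only on $d_{1},\dots,d_{n-1}$, $n$ and $C_{37}$, as asserted.

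I do not expect a genuine obstacle here; the one thing to be careful about is the bookkeeping — keeping the interlacing inequalities oriented correctly and checking that every implied constant depends only on the $d_\alpha$ and on $C_{37}$, never on $a$ or on the particular values $a_\alpha$. As an alternative I could expand $\det(M-\lambda I)$ by the Schur complement as $\prod_{\alpha<n}(d_\alpha-\lambda)\bigl[(a-\lambda)-\sum_{\alpha<n}a_\alpha^{2}/(d_\alpha-\lambda)\bigr]$ and apply Hurwitz's theorem to $\det(M-\lambda I)/(a-\lambda)$ on a fixed disc containing $d_{1},\dots,d_{n-1}$, but the interlacing–trace route above sidesteps complex analysis and the awkward case distinction for repeated $d_\alpha$.
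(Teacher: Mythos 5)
Your argument is correct, and it is a genuinely different (and self-contained) route from the paper's treatment: the paper does not prove this lemma at all, but simply quotes it from Caffarelli--Nirenberg--Spruck, where the asymptotics are extracted by analyzing the characteristic polynomial $\det(M-\lambda I)=\bigl[(a-\lambda)-\sum_{\alpha<n}a_\alpha^{2}/(d_\alpha-\lambda)\bigr]\prod_{\alpha<n}(d_\alpha-\lambda)$ near $\lambda=d_\alpha$ and near $\lambda=a$. Your interlacing-plus-trace argument replaces that computation by three soft facts: Cauchy interlacing against the principal block $\mathrm{diag}(d_1,\dots,d_{n-1})$, the Rayleigh bound $\lambda_{1}(M)\ge a$ together with $\lambda_{1}(M)^{2}\le\operatorname{tr}(M^{2})\le a^{2}+C'$, and the trace identity for $\operatorname{tr}M$, which forces the nonpositive gaps $\lambda_{j}(M)-d_{(j-1)}$ to have sum at least $-C'/(2a)$ and hence to be individually $O(1/a)$. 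All steps check out (the sign bookkeeping in the interlacing and in the ``sum of nonpositive terms bounded below'' step is right), the matching of the small eigenvalues to the $d_\alpha$ is legitimate since the sorted multisets agree, and every constant visibly depends only on $n$, $d_1,\dots,d_{n-1}$ and $C_{37}$. What your route buys is a shorter, purely linear-algebraic proof that avoids the case analysis for repeated $d_\alpha$ inherent in the polynomial/Hurwitz approach and sharpens the stated $o(1)$ for $\lambda_\alpha$ to an explicit $O(1/a)$ rate; what the classical computation buys instead is finer information (e.g.\ the next-order term $\lambda_\alpha\approx d_\alpha-a_\alpha^{2}/a$), which is not needed for the application in Lemma 5.8--5.9.
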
 \par

Therefore, combining  Lemma \ref{t5.6}  with Lemma \ref{t5.7} we obtain that
   \begin{lemma} \label{t5.8}
   There exist suitable constants $R_{0},\omega_{0}>0$ such that for all $\mathrm{t}\geq R_{0}$ we have
   \begin{equation*}
  G(\kappa'(a_{\alpha\beta}),\mathrm{t})>\psi(x)+\omega_{0}.
   \end{equation*}
  where $\kappa'(a_{\alpha\beta})$ is the eigenvalue of the  $(n-1)\times(n-1)$ matrix $a_{\alpha\beta}$.
   \end{lemma}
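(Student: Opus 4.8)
The plan is to identify the boundary operator $\widehat{G}[u_{\alpha\beta}(x)]$ with the limit $\lim_{\mathrm{t}\to\infty}G(\kappa'(a_{\alpha\beta}(x)),\mathrm{t})$ and then to promote the qualitative inequality $\Theta\ge\omega_{0}$ of Lemma \ref{t5.6} to the quantitative bound asserted here. First I would fix $x\in\partial\Omega$ and examine the curvature matrix $\mathcal{A}_{\mathrm{t}}$ attached to the perturbed Hessian $D^{2}u+\mathrm{t}\,q\otimes q$. By (\ref{e2.3}) this is a rank-one perturbation $\mathcal{A}_{\mathrm{t}}=\mathcal{A}+\frac{\mathrm{t}}{w}\,v\otimes v$ of $\mathcal{A}$, where $v=(b^{1n},\dots,b^{nn})\ne0$ because $b^{nn}=1-\frac{u_{n}^{2}}{w(1+w)}>0$. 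Passing to an orthonormal frame whose last vector is aligned with $v$ and diagonalising the resulting $(n-1)\times(n-1)$ block puts $\mathcal{A}_{\mathrm{t}}$ in the form of the matrix $M$ of Lemma \ref{t5.7}: the fixed entries $d_{1},\dots,d_{n-1}$ are the eigenvalues $\kappa'(a_{\alpha\beta}(x))$ of the tangential curvature block, the off-diagonal column entries are bounded (this is where the tangential and mixed normal-tangential second derivative bounds on $\partial\Omega$, established earlier in the proof of Lemma \ref{t4.1}, enter), and the growing parameter is comparable to $\mathrm{t}$. Lemma \ref{t5.7} then gives that $n-1$ eigenvalues of $\mathcal{A}_{\mathrm{t}}$ converge to $\kappa'(a_{\alpha\beta}(x))$ while the remaining one tends to $+\infty$; since $G(\kappa)=-e^{-A\sum_{i}\arctan\kappa_{i}}$ is continuous and $\arctan$ is bounded, letting $\mathrm{t}\to\infty$ yields
\begin{equation*}
\widehat{G}[u_{\alpha\beta}(x)]=\lim_{\mathrm{t}\to\infty}\widetilde{G}\bigl(D^{2}u+\mathrm{t}\,q\otimes q,p\bigr)=\lim_{\mathrm{t}\to\infty}G\bigl(\kappa'(a_{\alpha\beta}(x)),\mathrm{t}\bigr)=-e^{-A(F'(\kappa'(a_{\alpha\beta}(x)))+\frac{\pi}{2})},
\end{equation*}
where $F'(\kappa')=\sum_{\alpha<n}\arctan\kappa'_{\alpha}$.

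Next I would record the monotonicity and the rate of this convergence. Since $\arctan\mathrm{t}\uparrow\frac{\pi}{2}$ with $\frac{\pi}{2}-\arctan\mathrm{t}=\arctan\frac{1}{\mathrm{t}}\le\frac{1}{\mathrm{t}}$, the mean value theorem gives, uniformly for $x\in\partial\Omega$,
\begin{equation*}
0\le\widehat{G}[u_{\alpha\beta}(x)]-G\bigl(\kappa'(a_{\alpha\beta}(x)),\mathrm{t}\bigr)=e^{-AF'(\kappa'(a_{\alpha\beta}(x)))}\bigl(e^{-A\arctan\mathrm{t}}-e^{-A\pi/2}\bigr)\le\frac{C_{38}}{\mathrm{t}},
\end{equation*}
where $C_{38}$ depends only on $A$ and on a uniform upper bound for $e^{-AF'(\kappa'(a_{\alpha\beta}(x)))}$. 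Such a bound holds because each $\kappa'_{\alpha}$ stays in a fixed compact set: the tangential curvature block $[a_{\alpha\beta}]$ has entries controlled by $|Du|$ (Lemma \ref{t3.4}) and the tangential $C^{2}$ estimate, and, via Cauchy interlacing together with Lemma \ref{t2.3}(iii), $\arctan\kappa'_{\alpha}\in(-(\frac{\pi}{2}-\delta),\frac{\pi}{2})$ for every $\alpha<n$.

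Finally I would combine this with Lemma \ref{t5.6}. From $\Theta\ge\omega_{0}>0$ we have $\widehat{G}[u_{\alpha\beta}(x)]\ge\psi(x)+\omega_{0}$ for every $x\in\partial\Omega$, so setting $R_{0}=2C_{38}/\omega_{0}$ we obtain, for all $\mathrm{t}\ge R_{0}$ and all $x\in\partial\Omega$,
\begin{equation*}
G\bigl(\kappa'(a_{\alpha\beta}(x)),\mathrm{t}\bigr)\ge\widehat{G}[u_{\alpha\beta}(x)]-\frac{\omega_{0}}{2}\ge\psi(x)+\frac{\omega_{0}}{2},
\end{equation*}
and relabelling $\omega_{0}/2$ as $\omega_{0}$ finishes the argument. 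I expect the principal difficulty to be the first step: making rigorous, through equation (\ref{e2.3}) and Lemma \ref{t5.7}, the eigenvalue asymptotics of $\mathcal{A}_{\mathrm{t}}$ — in particular that $n-1$ of its eigenvalues stay in a fixed compact set and converge to $\kappa'(a_{\alpha\beta})$, with all estimates uniform over $\partial\Omega$ — since this is exactly what converts the qualitative content of Lemma \ref{t5.6} into the usable quantitative inequality; the remaining steps are elementary.
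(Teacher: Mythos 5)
Your proposal is correct and follows essentially the same route as the paper: identify $\widehat{G}[u_{\alpha\beta}]$ with $\lim_{\mathrm{t}\to\infty}G(\kappa'(a_{\alpha\beta}),\mathrm{t})$ through the rank-one perturbation $\widetilde{a}_{ij}=a_{ij}+\mathrm{t}\,b^{in}b^{nj}$ and the eigenvalue asymptotics of Lemma \ref{t5.7}, then invoke Lemma \ref{t5.6}. Your quantitative step (monotone convergence at rate $O(1/\mathrm{t})$ uniformly in $x$, choice $R_{0}=2C_{38}/\omega_{0}$, relabelling $\omega_{0}/2$ as $\omega_{0}$) merely makes explicit the uniformity that the paper treats with a qualitative limit, so no gap.
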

   \begin{proof}
  Since (\ref{e5.14a}) holds, we have
 $$\lim_{t\rightarrow\infty }\widetilde{G}(r+\mathrm{t}q\otimes q, p)\geq \psi(x)+\omega_{0}.$$
 Writing $\mathcal{A}(r, p)=[a_{ij}]$  and $\mathcal{A}(r+\mathrm{t}q\otimes q, p)=[\widetilde{a}_{ij}]$, we have
 $$\widetilde{a}_{ij}=a_{ij}+\mathrm{t}b^{in}b^{nj},$$
 where $b^{ij}=\delta_{ij}-\frac{D_{i}u D_{j}u}{w(1+w)}$.
 After an orthonormal transformation we may assume
 $$\widetilde{a}_{ij}=a_{ij}+\frac{\mathrm{t}}{\sqrt{1+|Du|^{2}}}\delta_{in}\delta_{nj}.$$
 By Lemma \ref{t5.7},  the eigenvalues of $\mathcal{A}(r+\mathrm{t}q\otimes q, p)=[\widetilde{a}_{ij}]$ are given by
 $$\kappa'_{\alpha}=\kappa_{\alpha}+o(1), \quad  \tilde{\kappa}_{n}=\mathrm{t}+\kappa_{n}+O(1),$$
 as $\mathrm{t}\rightarrow\infty $, where $\kappa'_{\alpha}$ are the eigenvalues of $[\tilde{a}_{\alpha\beta}]_{1\leq\alpha,\beta\leq n-1}$.
  Through our observation,  we have $$\widetilde{G}(r+\mathrm{t}q\otimes q, p)=G(\kappa(\widetilde{a}_{ij}))=G(\kappa_{\alpha}+o(1),\mathrm{t}+\kappa_{n}+O(1)),$$
 where $\kappa_{n}\geq -\tan\delta$. Then there holds
 $$\lim_{t\rightarrow+\infty}G(\kappa'(a_{\alpha\beta}),\mathrm{t})=\hat{G}(u_{\alpha\beta}).$$
 Applying Lemma \ref{t5.6}, if $\mathrm{t}\rightarrow\infty $, we obtain
  $$ G(\kappa'(a_{\alpha\beta}),\mathrm{t})>\psi(x)+\omega_{0}.$$
   \end{proof}

Recall that
 \begin{equation*}
 G(\kappa)=-e^{-A\sum\arctan \kappa_{i}},\quad \psi(x)=-e^{-Ah(x)}.
 \end{equation*}
 Suppose that $\kappa_{1},\ldots,\kappa_{n}$ are the eigenvalues of $a_{ij}(0)$ for $x\in\partial\Omega$. Applying Lemma \ref{t5.7}, we see that the eigenvalues $\kappa_{1},\kappa_{2},\ldots,\kappa_{n}$  behave like
\begin{equation}\label{e5.21}
\begin{aligned}
\kappa_{\alpha}&=\frac{1}{w}\lambda_{\alpha}+o(1),\\
\kappa_{n}&=\frac{1}{w^{3}}u_{nn}(0)\Big(1+O(\frac{1}{|u_{nn}(0)|})\Big).
\end{aligned}
\end{equation}
as $|u_{nn}(0)|\rightarrow \infty$, where $\lambda_{1},\ldots, \lambda_{n-1}$ are the eigenvalues of $u_{\alpha\beta}(0)$.\par
Next we prove the following conclusion:
\begin{lemma}\label{t5.9}
We show that
 $$\max_{x\in\partial\Omega }|u_{nn}(x)|\leq C_{38}$$
  for all $x\in\partial\Omega$, where $C_{38}$  is  positive constant depending on $\partial\Omega, n, \delta ,h$ and $\underline{u}$.
\end{lemma}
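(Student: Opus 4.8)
The plan is to argue by contradiction, feeding the tangential and mixed boundary estimates already obtained in this section into the eigenvalue asymptotics of Lemma~\ref{t5.7} and then invoking the strict inequality of Lemma~\ref{t5.8}.

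First I would dispose of the lower bound, which comes for free: since $F(\kappa)=\sum_i\arctan\kappa_i\ge(n-2)\frac{\pi}{2}+\delta$ throughout $\overline{\Omega}$, Lemma~\ref{t2.3} gives $\mathcal{A}=[a_{ij}]\ge-C_{30}$, and then the identity $a_{ij}=\frac{1}{w}b^{ik}u_{kl}b^{lj}$ together with the gradient bound of Lemma~\ref{t3.4} yields $D^{2}u\ge-C_{31}$; in particular $u_{nn}(x)\ge-C_{31}$ on $\partial\Omega$. Hence it suffices to prove an upper bound $u_{nn}(0)\le C_{32}$ at an arbitrary boundary point, which after the coordinate normalization of this section we take to be $0\in\partial\Omega$ with the $x_{n}$-axis the inner normal. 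Note also that $w=\sqrt{1+|Du|^{2}}$ is bounded at $0$ by the gradient estimate, so $u_{nn}(0)\to+\infty$ is equivalent to the $(n,n)$ entry $a_{nn}(0)$ of $\mathcal{A}(0)$ tending to $+\infty$.

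Next, suppose toward a contradiction that $u_{nn}(0)$ is large. By the pure tangential estimate $|u_{\alpha\beta}(0)|\le C_{11}$ and the mixed estimate $|u_{\alpha n}(0)|\le C_{29}$ proved earlier in this section, the matrix $\mathcal{A}(0)=[a_{ij}(0)]$ has, through (\ref{e2.4}) and the gradient bound, exactly the structure of the matrix $M$ of Lemma~\ref{t5.7}: a bounded diagonal block $[a_{\alpha\beta}(0)]$, a bounded last column, and a lower-right entry $a_{nn}(0)\to+\infty$. Applying Lemma~\ref{t5.7}, whose $o(1)$ and $O(1/a)$ terms are uniform in the known data, gives the expansion (\ref{e5.21}): the eigenvalues $\kappa_{\alpha}$ of $\mathcal{A}(0)$ converge to the eigenvalues $\kappa'_{\alpha}$ of $[a_{\alpha\beta}(0)]$, while $\kappa_{n}\to+\infty$. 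Since $G(\mathcal{A}(0))=-e^{-A(\sum_{\alpha}\arctan\kappa_{\alpha}+\arctan\kappa_{n})}$ and $\arctan$ has the finite limit $\frac{\pi}{2}$, this forces $G(\mathcal{A}(0))\to-e^{-A(\sum_{\alpha}\arctan\kappa'_{\alpha}+\frac{\pi}{2})}=\widehat{G}[u_{\alpha\beta}(0)]$, where the last equality is the identity $\lim_{t\to\infty}G(\kappa'(a_{\alpha\beta}(0)),t)=\widehat{G}[u_{\alpha\beta}(0)]$ from the proof of Lemma~\ref{t5.8}.

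Now the contradiction. On the one hand the equation gives $G(\mathcal{A}(0))=\psi(0)$ exactly. On the other hand, Lemma~\ref{t5.8} (equivalently (\ref{e5.14a})) shows $\widehat{G}[u_{\alpha\beta}(0)]\ge\psi(0)+\omega_{0}$. Combining this with the limit of the previous paragraph, for $u_{nn}(0)$ sufficiently large we would obtain $\psi(0)=G(\mathcal{A}(0))\ge\psi(0)+\frac{\omega_{0}}{2}$, which is absurd. Therefore $u_{nn}(0)\le C_{32}$, and together with the lower bound this gives $|u_{nn}(0)|\le C_{38}$; since $\omega_{0}$ (from Lemma~\ref{t5.6}), the thresholds above, and the bounds on $\underline{u}$, $Du$ and the tangential and mixed derivatives are all uniform and $0\in\partial\Omega$ was arbitrary, $C_{38}$ depends only on $\partial\Omega$, $n$, $\delta$, $h$ and $\underline{u}$. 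I expect the main obstacle to be making this last limit step quantitative, i.e.\ checking that the threshold for ``$u_{nn}(0)$ large'' is controlled purely by $\delta$, $n$, $\|h\|_{C^{2}}$ and the bounds on $\underline{u}$, $Du$, $u_{\alpha\beta}(0)$ and $u_{\alpha n}(0)$; this reduces to verifying that all the $o(1)$ terms from Lemma~\ref{t5.7} and from the continuity of $G$ near its face at infinity are uniform, after which everything else is routine bookkeeping.
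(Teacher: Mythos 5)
Your proposal is correct and follows essentially the same route as the paper: argue by contradiction at a boundary point where $u_{nn}$ is large, use the tangential and mixed bounds so that Lemma~\ref{t5.7} gives the eigenvalue asymptotics (\ref{e5.21}), and then combine continuity/monotonicity of $G$ with the strict gap of Lemma~\ref{t5.6}--\ref{t5.8} to contradict $G(\mathcal{A})=\psi$. The only cosmetic difference is that you pass to the full limit $\widehat{G}[u_{\alpha\beta}(0)]$, while the paper stops at a finite threshold $G(\kappa'_{\alpha},R_{2})$ via monotonicity, which is exactly how it makes the ``$u_{nn}$ large'' threshold quantitative.
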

\begin{proof}
Suppose that $|u_{nn}(x)|$ achieves its maximum at a point $x_{p}$ on $\partial\Omega$. We assume that $u_{nn}(x_{p})\geq  K$ where $K$ is a large constant to be chosen.
 Denote $\kappa_{\alpha}'=(\kappa'_{1},\ldots,\kappa'_{n-1})$  eigenvalues of  $a_{\alpha\beta}(x_{p})$. By (\ref{e5.21}), for every $\delta_{1}>0$, there exists $R(\delta_{1})\gg1$ such that if $u_{nn}(x_{p})\geq R(\delta_{1})$, then the eigenvalues of $a_{ij}(x_{p})$ satisfy
$$\kappa_{n}\geq R_{2},$$ and
     \begin{equation}\label{e5.22}
|(\kappa_{1},\ldots,\kappa_{n-1},\kappa_{n})-(\kappa_{1}',\ldots,\kappa'_{n-1},\kappa_{n})|<\delta_{1}.
  \end{equation}
  Consequently, we find that  there exists a positive constant $\delta_{1}>0$  depending on $\omega_{0}$ such that if $u_{nn}(x_{p})\geq K\triangleq\max\{ R(\delta_{1}), R_{2}\}$, then
  \begin{equation*}
  \begin{aligned}
\psi(x_{p})=G(\kappa)(x_{p})&\geq G(\kappa_{\alpha}',\kappa_{n})(x_{p})-\frac{\omega_{0}}{2}\\
&\geq G(\kappa_{\alpha}',R_{2})-\frac{\omega_{0}}{2}\\
&>\psi(x_{p})+\frac{\omega_{0}}{2},
\end{aligned}
  \end{equation*}
where  the first step follows from the continuity of  $G$,  the second step  follows from monotonicity of $G$, while the last comes from the fact that
Lemma \ref{t5.8}. So we obtain a contradictory result.
 Thus  we complete the proof of Lemma \ref{t5.9}.
\end{proof}
 Hence the boundary $C^{2}$ estimate is complete. Then this completes the proof of Lemma \ref{t4.1}.

\section{Proof of Theorem 1.3}
The goal of this section is to prove Theorem 1.3.    Since Lemma \ref{t4.1} and Lemma \ref{t4.2} have been established,  by  Evans-Krylov theorem and Schauder theory in \cite{GT}, we get the
priori bound for the $C^{3,\alpha}$  norm of $u$ for any $\alpha\in(0,1)$ to Dirichlet problem (\ref{e3.5}), i.e.
 \begin{equation}\label{e6.1}
\|u\|_{C^{3,\alpha}(\overline{\Omega})}\leq C_{39},
\end{equation}
where $C_{39}$ is a positive constant  depending  on $\Omega,\varphi,\|\underline{u}\|_{C^{4}(\overline{\Omega})},\|h\|_{C^{2}(\overline{\Omega})}$ and $\delta$.
 In this section we explain how to make use of (\ref{e6.1}) and continuity methold  to prove
   the existence of solution to (\ref{e1.3}) and (\ref{e1.4}).\par
    Following the same proof as Lemma  5.2 in \cite{CHY}, we can obtain
   \begin{lemma}\label{t6.1}
   If $\Omega,h,\varphi$ and $\underline{u}$ are smooth satisfying (A), then the solution  $u$ is smooth in $\bar{\Omega}$.
   \end{lemma}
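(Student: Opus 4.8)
The plan is to upgrade the $C^{3,\alpha}(\bar{\Omega})$ solution whose existence and a priori bound (\ref{e6.1}) were obtained above by the continuity method to a smooth one via a standard bootstrap on the linearized equation. Recall from (\ref{e5.3}) that $u$ satisfies $\widetilde{G}(D^2u,Du)=\psi(x)$ with $\psi(x)=-e^{-Ah(x)}$, so $\psi\in C^\infty(\bar{\Omega})$ when $h$ is smooth. Since $u$ is admissible and, by the second order estimate of Lemma \ref{t4.2} together with Lemma \ref{t2.3}, its principal curvatures $\kappa$ remain in a compact subset of $\Gamma^\sigma$ (in particular $\kappa_n\ge-1/\tan\delta$), the bound (\ref{e3.6}) and the control of the eigenvalues of $[b^{ij}]$ coming from the gradient estimate (\ref{e3.7}) show that $\widetilde{G}(\cdot,p)$ is uniformly elliptic along $u$ with ellipticity constants depending only on the data; moreover $\widetilde{G}(\cdot,p)$ is concave in the Hessian slot, because $r\mapsto\mathcal{A}(r,p)$ is linear for fixed $p$ and $G$ is concave by Lemma \ref{t3.3}.

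First I would differentiate (\ref{e5.3}) in an arbitrary coordinate direction $x_k$: writing $v=D_ku$ and letting $L=\widetilde{G}_{ij}D_iD_j+\widetilde{G}_iD_i$ be the linearized operator (coefficients evaluated at $(D^2u,Du)$), one gets
\[
Lv=D_k\psi\quad\text{in }\Omega .
\]
Since $\widetilde{G}$ is a smooth function of its arguments wherever it is elliptic and $(D^2u,Du)\in C^{1,\alpha}(\bar{\Omega})$, the coefficients $\widetilde{G}_{ij},\widetilde{G}_i$ lie in $C^{1,\alpha}(\bar{\Omega})$, $L$ is uniformly elliptic, and $D_k\psi\in C^\infty(\bar{\Omega})$. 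On $\partial\Omega$ the tangential derivatives of $u$ agree with those of $\varphi\in C^\infty(\partial\Omega)$; after flattening the (smooth) boundary one applies the global interior-and-boundary Schauder estimates of \cite{GT} to the tangential derivatives of $v$ and then recovers the remaining normal component from the equation itself, concluding $v\in C^{3,\alpha}(\bar{\Omega})$. As $k$ is arbitrary, $u\in C^{4,\alpha}(\bar{\Omega})$.

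Then I would iterate: once $u\in C^{m,\alpha}(\bar{\Omega})$ for some $m\ge3$, the coefficients of the equation $L(D_ku)=D_k\psi$ belong to $C^{m-2,\alpha}(\bar{\Omega})$ and $D_k\psi$ is smooth, so the higher order Schauder estimates give $D_ku\in C^{m,\alpha}(\bar{\Omega})$, hence $u\in C^{m+1,\alpha}(\bar{\Omega})$; by induction $u\in C^\infty(\bar{\Omega})$. The only step requiring any care is the passage from interior to global regularity, i.e.\ the boundary bootstrap: one must arrange the repeated differentiations so that the tangential directions inherit the smoothness of $\varphi$ and of $\partial\Omega$ while the (uniformly elliptic, concave) equation supplies the missing normal direction. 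This is exactly the argument of Lemma 5.2 in \cite{CHY}, which transfers verbatim to the present operator, so I would simply invoke it.
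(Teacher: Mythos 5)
Your proposal is correct and follows essentially the same route as the paper: the paper disposes of this lemma by invoking the standard elliptic bootstrap of Lemma 5.2 in \cite{CHY}, which is precisely the argument you spell out (differentiate the equation, apply higher Schauder theory tangentially, recover normal derivatives from the uniformly elliptic equation) and also cite at the end.
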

   To solve the equation, we use the continuity method.\par

 \noindent{\bf Proof of Theorem 1.3.}

 It's enough for us to consider Dirichlet problem (\ref{e3.5}) as follows.
  \begin{equation*}
\left\{ \begin{aligned}
\widetilde{G}(D^{2}u, Du)&= \psi(x),\ \ && \quad
 \mathrm{in}\quad \Omega, \\
u&=\varphi,  && \quad
 \mathrm{on}\quad\partial\Omega.
\end{aligned} \right.
\end{equation*}
 where $\widetilde{G}(D^{2}u, Du)=G(\mathcal{A})$ and $\psi(x)=-e^{-Ah(x)}$.
 One can use the pre-knowledge in  previous section. Assume that  $\underline{u}$ is $C^{4}(\overline{\Omega})$ subsolution satisfying (\ref{e2.6}).
It yields that the classical solution to (\ref{e1.3}) and (\ref{e1.4}) is unique by maximum principle.
 For each $t\in[0,1]$,  set
 $$J^{t}(D^{2}u,Du)=\widetilde{G}(D^{2}u,tDu).$$
 Consider
  \begin{equation}\label{e6.2}
\left\{ \begin{aligned}
J^{t}(D^{2}u,Du)&=\psi(x),\ \ &&\mathrm{in}\quad \Omega, \\
u&=\varphi,             &&\mathrm{on}\quad \partial\Omega.
\end{aligned} \right.
\end{equation}
Define the set
$$S:=\left\{t\in[0,1]:\text{(\ref{e6.2}) has at least one admissible solution}\right\}.$$
By the main results in \cite{CPW},  (\ref{e6.2}) is solvable if $t=0$.
So $S$ is not empty. We claim that $S=[0,1]$, which is equivalent to the fact that $S$ is not only open, but also closed.

Define
$$X:=\{u\in C^{3,\alpha}(\bar{\Omega}):u\mid_{\partial\Omega}=\varphi\}$$
 and
$$Y:= C^{\alpha}(\bar{\Omega})\times C^{2,\alpha}(\partial\Omega).$$
Then $X$ is  the closed convex subset in Banach space $C^{3,\alpha}(\bar{\Omega})$ and $Y$ is a Banach space.
Define a map from $X\times [0,1]$ to $Y$ as
$$\mathfrak{F}(u,t):=\left(J^{t}(D^{2}u,Du)-\psi(x),u\right).$$
Given $(u_{0},t_{0})\in X\times [0,1]$. For  any $w\in C^{3,\alpha}(\overline{\Omega})$ with $w\mid_{\partial\Omega}=0$, we have
\begin{equation}\label{e6.3b}
\begin{aligned}
&\frac{d}{d\epsilon}\mathfrak{F}(u_{0}+\varepsilon w,t_{0}+\varepsilon)|_{\epsilon=0}\\
=&\left(\widetilde{G}_{ij}(D^2u_{0},t_{0}Du_{0})\partial_{ij}w+\widetilde{G}_{i}(D^2u_{0},t_{0}Du_{0})t_{0}\partial_{i}w+\widetilde{G}_{i}(D^2u_{0},t_{0}Du_{0})\partial_{i}u_{0}, 0\right).
\end{aligned}
\end{equation}
Then the linearized operator of $\mathfrak{F}(u,t)$ at $(u_{0},t_{0})$ is given by
$$D\mathfrak{F}(u_{0},t_{0})=\left(\widetilde{G}_{ij}(D^2u_{0},t_{0}Du_{0})\partial_{ij}w+\widetilde{G}_{i}(D^2u_{0},t_{0}Du_{0})t_{0}\partial_{i}w+\widetilde{G}_{i}(D^2u_{0},t_{0}Du_{0})\partial_{i}u_{0}, 0\right).$$
 One can see that $\widetilde{G}_{ij}(D^2u_{0},t_{0}Du_{0}), \widetilde{G}_{i}(D^2u_{0},t_{0}Du_{0})\in C^{1,\alpha}(\overline{\Omega})$ and
 $$\mathfrak{L}\triangleq \widetilde{G}_{ij}(D^2u_{0},t_{0}Du_{0})\partial_{ij}+\widetilde{G}_{i}(D^2u_{0},t_{0}Du_{0})t_{0}\partial_{i}$$
 is a  uniformly linear elliptic operator by calculation.  Then making use of Schauder theory in \cite{GT}, $D\mathfrak{F}(u_{0},t_{0})$ is invertible for any  $u_{0}\in X $ and  $t_{0}\in[0,1]$ being the solution to (\ref{e6.2}).
 Then the fact that S is open follows from the
invertibility of the linearized operator and the implicit function theorem \cite{GT}.

To finish the proof, we need to prove the fact that $S$ is a closed subset of $[0,1]$.
One can show that $S$ is closed is equivalent to the fact that for any sequence $\{t_k\}\subset S$, if $\lim_{k\rightarrow \infty}t_k=t_0$, then $t_0\in S$. For $t_k$, $u_{k}$ is  the solution of
  \begin{equation}\label{e6.3}
\left\{ \begin{aligned}
J^{t_{k}}(D^{2}u_{k}, Du_{k})&=\psi(x),\ \ &&\mathrm{in}\quad \Omega, \\
u_{k}&=\varphi,             &&\mathrm{on}\quad \partial\Omega.
\end{aligned} \right.
\end{equation}
  The operator $J^{t}$ comes from $\widetilde{G}$. By the proof of the main theorem in \cite{HL2},  the  structure conditions of $J^{t}$ is as same as $\widetilde{G}$. Then for any admissible solution of (\ref{e6.2}), the estimate (\ref{e6.2}) also holds. So any solution $u_{k}$ satisfies
$$\|u_{k}\|_{C^{3,\alpha}(\overline{\Omega})}\leq C_{40},$$
where $C_{40}$ depends only on the known data and also is  independent to $t_{k}$.
  Using Arzela-Ascoli Theorem, we know that there exist $\hat{u}\in C^{3,\alpha}(\bar{\Omega})$ and a subsequence of $\{t_k\}$, which is still denoted as $\{t_k\}$, such that letting $k\rightarrow \infty$,
\begin{equation*}
\left\|u_k-\hat{u}\right\|_{C^{3}(\bar{\Omega})}\rightarrow 0.
\end{equation*}
For equation (\ref{e6.3}),
letting $k\rightarrow \infty$, we have
\begin{equation*}
\left\{ \begin{aligned}
J^{t_{0}}(D^{2}\hat{u}, D\hat{u})&=\psi(x),\ \ &&\mathrm{in}\quad \Omega, \\
\hat{u}&=\varphi,             &&\mathrm{on}\quad \partial\Omega.
\end{aligned} \right.
\end{equation*}
Therefore, $t_0\in S$, and thus $S$ is closed. Consequently, $S=[0,1]$.

It follows Lemma 6.1  that there exists a  smooth solution $u\in C^{\infty}(\overline{\Omega})$ to the Dirichlet
problem (\ref{e1.3}) and (\ref{e1.4})  if all data is smooth.
Thus  Theorem \ref{t1.3} is established.
 \qed

{\bf Acknowledgments:}  The authors thank Professor Hengyu Zhou for helpful discussions and suggestions related to this work. The authors would also like to thank the referees for useful comments that have improved the paper.

\end{document}